\DeclareFontFamily{OT1}{pzc}{}
\DeclareFontShape{OT1}{pzc}{m}{it}{<-> s * [1.10] pzcmi7t}{}
\DeclareMathAlphabet{\mathpzc}{OT1}{pzc}{m}{it}                 
\newtheorem*{lem2*}{Lemma 1.2*}
\newtheorem*{lem1*}{Lemma 1.1*}
\newtheorem*{thm1.3b}{Theorem 1.3b}
\newtheorem*{citedthm}{Theorem}
\newtheorem{thm}{Theorem}[section]
\newtheorem{lem}[thm]{Lemma}
\def\R{\mathbb R}
\def\valf{\mathcal Z}
\newcommand*{\dif}{\mathop{}\!\mathrm{d}}             
\newcommand{\sln}{\mathrm{SL}(n)}                  
\newcommand{\gln}{\mathrm{GL}(n)}                  
\def\MP{\mathscr{P}}
\def\MK{\mathscr{K}}
\def\MQ{\mathscr{Q}}
\newcommand\CF[1]{C(\R^{#1})}                                    
\newcommand\CFo[1]{C(\R^{#1}\setminus \{o\})}                  
\newcommand{\ro}[1]{\R^{#1}\setminus \{o\}}                
\newcommand{\FF}[1]{F(\R^{#1}; \R)}                     
\newcommand{\FFo}[1]{F(\R^{#1}\setminus \{o\}; \R)}     
\newcommand{\cms}{\mathscr{M}^c(\R)}                   
\def\MKoon{\MK_{(o)}^n}
\def\MPon{\MP_{o}^n}
\newcommand\ab[1]{\left(#1\right)}        
\def\e{\varepsilon}
\def\lap{\mathcal{L}}    
\newcommand{\legt}[1]{{#1}^\ast}
\def\epi{\operatorname{epi}}
\def\epic{\overset{\operatorname{epi}}{\longrightarrow}}
\def\pc{\overset{\operatorname{p}}{\longrightarrow}}
\def\hc{\overset{\operatorname{hypo}}{\longrightarrow}}
\def\convs{\operatorname{Conv}_{sc}(\R^n)}
\def\convf{\operatorname{Conv}(\R^n; \R)}
\def\cvx{\operatorname{Conv}(\R^n)}
\def\lc{\operatorname{LC}(\R^n)}
\def\lcsc{\operatorname{LC}_{sc}(\R^n)}
\def\lcpos{\operatorname{LC}(\R^n;(0,\infty))}
\def\dom{\operatorname{dom}}
\def\u{u}                                   
\def\v{v}                                   
\newcommand{\lf}{\ell}                      
\def\indf{\mathbf{I}}
\def\dom{\operatorname{dom}}
\def\sgn{\operatorname{sgn}}
\newcommand\expfsmall[1]{e^{#1}}
\newcommand\expf[1]{\exp\left\{#1\right\}}
\def\simga{\sigma}
\def\myvskip{\vskip 5pt}
\newcommand{\Rmnum}[1]{\expandafter\@slowromancap\romannumeral #1@} 
\newcommand{\subjclass}[2][1991]{%
  \let\@oldtitle\@title%
  \gdef\@title{\@oldtitle\footnotetext{#1 \emph{Mathematics subject classification.} #2}}%
}
\newcommand{\keywords}[1]{%
  \let\@@oldtitle\@title%
  \gdef\@title{\@@oldtitle\footnotetext{\emph{Key words and phrases.} #1.}}%
}
\title{\bf{The Legendre transform, the Laplace transform and valuations}}
\author[1,2,3]{Jin Li}
\affil[1]{Department of Mathematics, Shanghai University, Shanghai, China, 200444}
\affil[2]{Newtouch Center for Mathematics of Shanghai University, Shanghai, China,  200444}
\affil[3]{Institut f\"{u}r Diskrete Mathematik und Geometrie, Technische Universit\"{a}t Wien, Wien, Austria, 1040 \authorcr \href{mailto: Jin Li<li.jin.math@outlook.com>}{li.jin.math@outlook.com}}
\date{}
\subjclass[2020]{52B45, 52A41, 52A20, 26B25, 44A10, 49N15}
\keywords{Valuation, Legendre transform, Laplace transform, $\sln$ contravariance, Translation conjugation, Convex function, Log-concave function}
\begin{document}

\maketitle

\begin{abstract}
We first prove that the Legendre transform is the only continuous and $\mathrm{SL}(n)$ contravariant valuation that behaves as a conjugation of two important translations on super-coercive, lower semi-continuous, and convex functions.
Then we turn to a similar setting on log-concave functions and find characterizations of not merely the duality transform but also the Laplace transform on log-concave functions.
With the notion of dual valuation, we also obtain characterizations of the identity transform on finite convex functions and positive log-concave functions.
\end{abstract}

\section{Introduction}
Ever since Dehn's solution to Hilbert's third problem and Hadwiger's characterization of (intrinsic) volumes, valuation theory has been widely and deeply studied.
Various important operators on convex bodies were characterized in valuation theory with their natural geometric invariances; see for example \cite{Ale99,Ale01,Ale04,AF2013con,Lud05,Sch08,klain2000even,LR10,BF2011herm,HP14b,SW2015mink,McM77,ABS2011harm,Sch2010,HS2014loc,Lud06,Li2020slnco,Li2018AFV,Kla95,Lud10b,MR4295088,MR4316669}.
See also the books and surveys \cite{KR97,Lud2006,MR3380549,Schb2}.
The theory of valuations on function spaces is a novel, developing field \cite{CLM2017Min,Tsa10,CLM2017RV,Lud12,Lud11b,CLM2019homogeneous,CLM2020hadwiger,CLMhadwiger2,CLMhadwiger3,CLMhadwiger4,MR4070303,MR3783417,MR4252807,Ma15real,MR3897436,MR4201535}, also see the surveys \cite{Lud11a,ludwig2021geometric}.
However, it is somehow surprising that only a few studies on transforms of functions in valuation theory were made (see \cite{LM2017Lap,MR3870609,MR4567496} for some recent results) although many important transforms are valuations.
The main aim of this paper is to establish certain characterizations of the Legendre transform and the Laplace transform with similar assumptions.

For background and applications of the Legendre transform and the Laplace transform, see, e.g., \cite{Roc70conv,Doe74}.
The Laplace transform on Lebesgue functions was characterized previously in \cite{LM2017Lap}.
However, it is a different challenge for log-concave functions.

We remark that a series of beautiful characterizations of transforms of functions, including the Legendre transform and the Fourier transform (closely connected to the Laplace transform), was established by Artstein-Avidan, Milman, and many others, for example, \cite{MR2928347,AM09Leg,MR2719283,MR2985126,AM11hid,MR4372149}.
Although the assumptions in their papers are related to valuations, their results are somehow far away from this study.
We will explain it later for the Legendre transform.

Denote by $\R^n$ the $n$-dimensional Euclidean space.
Let $\Gamma$ be a subset of a lattice $(\Gamma',\vee,\wedge)$.
We call a map $\valf$ mapping from $\Gamma$ to an Abelian semigroup $(\mathbb{A},+ )$ a \emph{valuation} if
\begin{align*}
\valf (\gamma_1 \vee \gamma_2) + \valf (\gamma_1 \wedge \gamma_2) = \valf \gamma_1 + \valf \gamma_2,
\end{align*}
whenever all four elements $\gamma_1, \gamma_2, \gamma_1 \vee \gamma_2, \gamma_1 \wedge \gamma_2 \in \Gamma$.
If $\Gamma$ is a space of (extended) real-valued functions on $\R^n$, then
\begin{align*}
(\gamma_1 \vee \gamma_2)(x) = \max\{\gamma_1(x),\gamma_2(x)\},~~
(\gamma_1 \wedge \gamma_2)(x) = \min\{\gamma_1(x),\gamma_2(x)\}
\end{align*}
for any $x \in \R^n$.
If $\Gamma$ is a set of convex bodies, then $\vee$ and $\wedge$ are the union and intersection of convex bodies, respectively.
We remark that in some previous characterization of dualities (e.g. \cite{MR2438994,AM09Leg}), different lattice structures are often used, in which the operation $\wedge$ is replaced by $\tilde \wedge$ for convex functions (and similarly for convex bodies).
For convex functions $\u$ and $\v$, the function $(\u \tilde \wedge \v)(x)$ is defined as the greatest convex function that is smaller than $\u \wedge \v$.
Clearly $\u \wedge \v = \u \tilde \wedge \v$ if and only if $\u \wedge \v$ is convex.
Our definition of valuation aligns with that in previous works on valuations, which do not impose additional conditions when $\u \wedge \v$ is not convex. This allows the framework to include a wide class of important maps (e.g. the Laplace transform introduced later and Theorem \ref{thm:log} for convex functions correspondingly) as valuations.

Let $\cvx$ be the space of all lower semi-continuous, convex functions $\u:\R^n \to \R \cup \{\infty\}$.
We consider two of its important subspaces.
Let $\convs$ be the space of all proper, super-coercive $\u\in \cvx$ and let $\convf$ be the space of all finite convex functions $\u: \R^n \to \R$.
Here a convex function $\u$ is called \emph{proper} if $\u \not\equiv \pm \infty$; and it is called \emph{super-coercive} if $\lim_{|x|\to \infty}\frac{\u(x)}{|x|}=\infty$.
The functions within $\convs$ carry important significance in convex geometric analysis as counterparts to compact convex sets, since Legendre transforms of these functions result in finite convex functions.
Furthermore, the Legendre transform also maps finite convex functions back to super-coercive convex functions.
Consequently, we can utilize what are known as dual valuations to seamlessly translate the classification of valuations on $\convs$ into classifications of valuations on $\convf$.
For more details, see \S \ref{sec:dualv}.

The classical \emph{Legendre transform} of $\u \in \cvx$ is
\begin{align*}
\legt{\u} (x)= \sup_{y \in \R^n} \langle x,y\rangle - \u (y),~ x\in \R^n.
\end{align*}
Artstein-Avidan and Milman \cite{AM09Leg} showed that the Legendre transform is essentially the only transform $\valf :\cvx \to \cvx$ that is a bijection and an order conjugation (they called this an order-reversing isomorphism), that is,
\begin{align*}
\u \leq \v \Longleftrightarrow \valf \u \geq \valf \v.
\end{align*}
Since $\valf :\cvx \to \cvx$ is a bijection, being an order conjugation is equivalent to the following conjugation of ``minima" and maxima:
\begin{align*}
\valf (\u \vee \v) = (\valf \u) \tilde \wedge (\valf \v),~~\valf (\u \tilde \wedge  \v) = (\valf \u)  \vee (\valf \v).
\end{align*}
(the equivalence is also pointed out in \cite{AM09Leg} and is a key step in their proof.) 
If $\u \wedge \v$ is convex, then $\legt{\u} \wedge \legt{\v}= \legt{\u} \tilde \wedge \legt{\v}$; see \cite{CLM2017hessian}[Lemma 3.4].
Therefore, for the Legendre transform, being a conjugation of ``minima" and maxima is much stronger than being a valuation.

We require other natural properties to characterize the Legendre transform in valuation theory, so it is helpful first to recall the characterization of its analog in the theory of convex bodies: the polar body map on convex bodies (that associates with a convex body $K$ its polar body $K^\ast$).
Such characterizations in valuation theory were established by Ludwig \cite{Lud06,Lud10b}.
We would also like to remark that a characterization of the polar body map analogous to \cite{AM09Leg} was established by B\"{o}r\"{o}czky and Schneider \cite{MR2438994}; see also, Gruber \cite{MR1182848}, Artstein-Avidan and V.~Milman \cite{MR2406688} and Slomka \cite{MR2795422}.
Let $\MK^n$ be the set of all convex bodies in $\R^n$, and $\MKoon$ be the set of all convex bodies containing the origin in their interiors.

\begin{citedthm}[Ludwig \cite{Lud06,Lud10b}]
Let $n \ge 2$.
A map $Z:\MKoon \to (\MK^n,+)$ is a continuous valuation satisfying
\begin{align*}
Z(\phi K)=\phi^{-t}ZK
\end{align*}
for every $K \in \MKoon$ and $\phi \in \gln$,
if and only if there are $c_1,c_2 \ge 0$ such that
\begin{align*}
ZK=c_1 K^\ast + c_2 (-K^\ast)
\end{align*}
for every $K\in \MKoon$.
Here ``$+$" can be either Minkowski addition or radial addition.
\end{citedthm}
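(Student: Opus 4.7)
The plan is to verify the ``if'' direction by direct computation and to prove the ``only if'' direction via the standard Ludwig-style classification, reducing continuity $\to$ polytopes $\to$ simplices $\to$ explicit formula.

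For the ``if'' direction, the polar map $K \mapsto K^\ast$ is continuous on $\MKoon$ in the Hausdorff metric and satisfies $(\phi K)^\ast = \phi^{-t} K^\ast$, so $ZK := c_1 K^\ast + c_2(-K^\ast)$ is automatically continuous and $\gln$ contravariant. For the valuation identity, one verifies that $K^\ast + L^\ast = (K \cap L)^\ast + (K \cup L)^\ast$ whenever $K, L, K\cup L, K\cap L \in \MKoon$, using the classical relations $(K \cup L)^\ast = K^\ast \cap L^\ast$ and $(K \cap L)^\ast = \operatorname{conv}(K^\ast \cup L^\ast)$ together with a support-function (resp.\ radial-function) computation that exploits the convexity of $K \cup L$ for Minkowski (resp.\ radial) addition.

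For the ``only if'' direction, first take $\phi = t I$ in the $\gln$ contravariance to get $Z(tK) = t^{-1} ZK$, pinning down the degree of homogeneity. By continuity it suffices to determine $Z$ on the family $\MP_{(o)}^n$ of polytopes containing $o$ in their interior. Any such $P$ admits a cone dissection $P = \bigcup_F \operatorname{conv}(\{o\} \cup F)$ over its facets, and inclusion--exclusion via the valuation identity expresses $ZP$ as a signed sum of values of $Z$ on simplices having $o$ as a vertex, reducing the problem to a classification on this family. For a canonical simplex $T$ with vertices $o, e_1, \dots, e_n$, the stabilizer of $T$ in $\gln$ is the symmetric group of permutation matrices on $\{e_1, \dots, e_n\}$, acting on $ZT$ via the contragredient $\phi \mapsto \phi^{-t}$; this action must fix $ZT$. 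Combined with the degree $-1$ homogeneity and continuity, this forces $ZT$ to be a nonnegative combination of the two convex bodies naturally invariant under the stabilizer, namely $T^\ast$ and $-T^\ast$. Once $ZT = c_1 T^\ast + c_2(-T^\ast)$ is established, $\sln$ contravariance transports the identity to every simplex with $o$ as a vertex (the constants being $\sln$-invariant), the dissection formula extends it to $\MP_{(o)}^n$, and continuity delivers $\MKoon$.

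The main obstacle is the simplex classification: one must show that the combined constraints of stabilizer invariance, homogeneity of degree $-1$, and continuity admit precisely the two-parameter family $\{c_1 T^\ast + c_2(-T^\ast) : c_1, c_2 \ge 0\}$ and no further exotic outputs. Without additional symmetry hypotheses such as $Z(-K) = ZK$ or $Z(-K) = -ZK$, both coefficients remain genuinely free, which is exactly what accounts for the reflected term $c_2(-K^\ast)$ in the final formula.
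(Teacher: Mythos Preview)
The paper does not prove this theorem at all: it is stated as a \emph{cited} result of Ludwig (hence the \texttt{citedthm} environment and the attribution to \cite{Lud06,Lud10b}) and serves only as motivation for the functional analogues developed later. There is therefore no ``paper's own proof'' to compare against.

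That said, your sketch has a genuine structural gap in the ``only if'' direction. The domain of $Z$ is $\MKoon$, i.e.\ convex bodies with the origin in the \emph{interior}. The cones $\operatorname{conv}(\{o\}\cup F)$ in your dissection have $o$ as a vertex, so they lie on the boundary of the admissible class and $Z$ is simply not defined on them. You cannot feed them into the valuation identity without first extending $Z$ to a larger class (e.g.\ $\MP_o^n$), and such an extension is itself a nontrivial step that Ludwig handles carefully via separate arguments; it does not come for free from continuity on $\MKoon$.

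Even granting an extension, the stabilizer step is too optimistic. For the standard simplex $T=[o,e_1,\dots,e_n]$ the stabilizer in $\gln$ consists of permutation matrices, which are orthogonal, so the contragredient action is the same permutation action. There are infinitely many convex bodies fixed by this action (every origin-symmetric body invariant under coordinate permutations, for instance any ball or cube), so invariance alone cannot force $ZT$ into the two-parameter family $c_1 T^\ast + c_2(-T^\ast)$. Moreover, since $o\in\partial T$, the polar $T^\ast$ is unbounded and hence not even in $\MK^n$, so the formula $ZT=c_1 T^\ast+c_2(-T^\ast)$ is ill-posed on this test body. Ludwig's actual arguments proceed quite differently, working with support or radial functions of $ZK$ on carefully chosen bodies in $\MKoon$ and exploiting specific valuation relations (e.g.\ on double pyramids or cross-polytopes) rather than a naive simplex reduction.
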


However, the following example shows that we can not characterize the Legendre transform as analogous to Ludwig's theorem without additional assumptions.
Let $\FF{n}$ be the set of all functions $f:\R^n \to \R$
and let $\Gamma$ be a space of (extended) real-valued functions on $\R^n$ which is invariant under transformations in $\sln$, that is,
if $\gamma \in \Gamma$, then $\gamma \circ \phi^{-1} \in \Gamma$ for all $\gamma \in \sln$.
We say that a transform $\valf :\Gamma \rightarrow \FF{n}$ is \emph{$\sln$ contravariant} (or \emph{$\gln$ contravariant}) if
\begin{align*}
\valf (\gamma \circ \phi^{-1})= (\valf \gamma) \circ \phi^{t}
\end{align*}
for every $\gamma \in \Gamma$ and $\phi \in \sln$ (or $\phi \in \gln$ respectively).
Here $\sln$ is the special linear group and $\gln$ is the group of linear transformations of $\R^n$.

\myvskip
\noindent\textbf{Example.}
Define $\widetilde \valf: \convs \to \FF{n}$ by
\begin{align*}
\widetilde \valf u(x)=\int_{0}^\infty h(\{\expfsmall{-u} \ge t\},x) \dif t, ~ u \in \convs,~x \in \R^n.
\end{align*}
Here $h(\{\expfsmall{-u} \ge t\},\cdot)$ is the support function of $\{\expfsmall{-u} \ge t\}:=\{y \in \R^n: \expfsmall{-u(y)} \ge t\}$; see the definition of the support function in \S \ref{s2}.
It is easy to see that both $\widetilde \valf$ and the Legendre transform are $\gln$ contravariant valuations.

To avoid this example, we consider what happens if we translate functions.
There are two fundamental translations of convex functions.
One is the (usual) translation $\tau_y \u(x):=\u(x-y)$ for $y \in \R^n$ and $\u \in \cvx$.
The second one is the so-called dual translation $u+\lf_{y}$ for $y \in \R^n$ and $\u \in \cvx$; see, e.g., \cite{Ale2017MAo,CLM2019homogeneous,MR4070303,CLM2020hadwiger} (sometimes with different name).
Here $\lf_{y}(x)=x \cdot y$ for any $x \in \R^n$, where $x \cdot y$ is the inner product of $x,y \in \R^n$.
The Legendre transform behaves as a conjugation of these two translations; that is, it maps the translation of a function to the dual translation of the valued function and vice versa.
However, the above example $\widetilde \valf$ only maps the translation to the dual translation (with some weight as the assumption in Theorem \ref{thm:lt2a}), lacking the reverse direction.
In fact,
\begin{align*}
\widetilde \valf (\tau_y \u)(x)
&=\int_0^{\max_{x\in \R^n}\expfsmall{-\u(x)}} h(\{\expfsmall{-\tau_y\u} \ge t\},x) \dif t\\
&=\int_0^{\max_{x\in \R^n}\expfsmall{-\u(x)}} h(\{\expfsmall{-\u} \ge t\},x)+x\cdot y \dif t\\
&=\widetilde \valf (\u)(x) +  e^{-\min_{x\in\R^n} \u(x)} \lf_y(x).
\end{align*}

We say that a transform $\valf :\convs \rightarrow \FF{n}$ is a \emph{translation conjugation} if
\begin{align*}
\valf (\tau_y \u)=\valf (\u)+\lf_{y}, ~\valf (\u+\lf_{y})=\tau_{y} \valf (\u)
\end{align*}
for every $u \in \convs$ and $y \in \R^n$.
It is called \emph{continuous} if $\u_i \epic \u$ implies $\valf \u_i \pc \valf \u$.
Here $\u_i \epic \u$ denotes that $\u_i$ epi-converges to $\u$ and $\valf \u_i \pc \valf \u$ denotes that $\u_i$ converges to $\u$ pointwise; see \S \ref{s2}.
In the following, we mostly work in the space $\R^n$ for $n \ge 3$ since we use a result in \cite{Li2020slnco}; see the following Lemma \ref{lem:3body}.
The planar cases not included in this paper will be studied separately by different approaches.
We obtain the following characterization of the Legendre transform.

\begin{thm}\label{thm:Leg}
Let $n \ge 3$. A transform $\valf :\convs \rightarrow \FF{n}$ is a continuous and $\sln$ contravariant valuation which is a translation conjugation, if and only if there is a constant $c \in \R$ such that
\[\valf \u = \legt{\u} + c\]
for every $\u \in \convs$.
\end{thm}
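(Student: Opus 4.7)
The ``if'' direction collects classical properties of the Legendre transform: $\legt{(\cdot)}$ maps $\convs$ into $\convf \subset \FF{n}$, is a continuous $\sln$ contravariant valuation, and the definitions yield $\legt{(\tau_y u)} = \legt{u} + \lf_y$ and $\legt{(u + \lf_y)} = \tau_y \legt{u}$, which is precisely the translation-conjugation condition. Adding a constant preserves all of these properties.

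For the ``only if'' direction, my plan is to first reduce the problem to a classification of function-valued valuations on convex bodies and then extend to all of $\convs$ by density. For $K \in \MKoon$, the convex indicator $\indf_K$ (equal to $0$ on $K$ and $+\infty$ otherwise) lies in $\convs$. Define $Z : \MKoon \to \FF{n}$ by $ZK := \valf(\indf_K)$. The identities
\[
\indf_{K_1 \cup K_2} = \indf_{K_1} \wedge \indf_{K_2}, \qquad \indf_{K_1 \cap K_2} = \indf_{K_1} \vee \indf_{K_2}
\]
(valid whenever $K_1 \cup K_2$ is convex) transfer the valuation property of $\valf$ to $Z$. Hausdorff convergence of convex bodies is equivalent to epi-convergence of their convex indicators, giving continuity of $Z$; the identities $\indf_K \circ \phi^{-1} = \indf_{\phi K}$ and $\indf_{K+y} = \tau_y \indf_K$, combined with $\sln$ contravariance and the first half of translation conjugation, give $Z(\phi K) = ZK \circ \phi^t$ and $Z(K+y) = ZK + \lf_y$ respectively. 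Invoking Lemma \ref{lem:3body} of \cite{Li2020slnco} (whose proof requires $n \ge 3$) together with the translation rule $Z(K+y) = ZK + \lf_y$ then pins down $ZK = h_K + c$ for some $c \in \R$, i.e., $\valf(\indf_K) = \legt{\indf_K} + c$ on $\MKoon$.

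It remains to extend $\valf u = \legt u + c$ from convex indicators to all of $\convs$. Iterated application of both halves of the translation conjugation to $\indf_K$ propagates the identity to all functions of the form $\indf_{K'} + \lf_y + a$ with $K' \in \MK^n$, $y \in \R^n$, $a \in \R$; indeed, the computation $\tau_w(\indf_K + \lf_y) = \indf_{K+w} + \lf_y - \langle y, w \rangle$ shows that constant shifts arise naturally from combining the two translations, and a short check confirms consistency with $\legt{(\indf_{K'} + \lf_y + a)} + c$. Every $u \in \convs$ is an epi-limit of finite pointwise infima of such shifted indicators (a standard polytopal/epigraph approximation), so an inductive application of the valuation identity combined with the joint continuity of $\valf$ and $\legt{(\cdot)}$ with respect to epi-convergence finishes the proof. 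The chief obstacle is precisely this final step: iterating the valuation identity requires one to keep the intermediate maxima and minima in $\convs$, which forces a careful choice of the approximating sequence; the translation-conjugation condition is essential here because it supplies the ``vertical'' degree of freedom needed to reach arbitrary epigraphs from indicator data on convex bodies alone.
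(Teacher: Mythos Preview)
Your step 3 contains a genuine gap. Lemma~\ref{lem:3body} together with the single translation rule $Z(K+y)=ZK+\lf_y$ does \emph{not} pin down $ZK=h_K+c$. Working this out carefully (this is precisely Theorem~\ref{thm:id1} with $Z_0\equiv 1$) yields the three-parameter family
\[
ZP \;=\; c_1\,h_P \;+\; c_2\,h_{-P} \;+\; c_3 \;+\; c_4\,V_n(P),\qquad c_1-c_2=1.
\]
Using the \emph{second} half of the translation conjugation on functions of the form $\indf_{K}+\lf_y+a$ gives you the vertical behaviour $\valf(\indf_P+t)=\valf(\indf_P)-t$, but this is consistent with every choice of $c_2,c_4$ and hence imposes no new constraint; the paper's Step~i in the proof of Theorem~\ref{thm:lt2b} carries out exactly this calculation and still ends with the free parameters $c_2,c_4$. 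The elimination of $c_2$ and $c_4$ is the real work: the paper builds (Lemma~\ref{lem:220519-1}) an explicit sequence $v_m=u_1\wedge\dots\wedge u_m$ with $u_i=\indf_{[i-1,i]\times[0,1]^{n-1}}+\lf_{ie_1}-\tfrac{i^2-i}{2}$, so that $v_m$ epi-converges in $\convs$, computes $\valf v_m$ via the valuation identity and both translation rules, and observes that $\valf v_m(re_1)$ diverges as $m\to\infty$ unless $c_2=c_4=0$. This step uses continuity and the second translation identity in an essential way on genuinely piecewise-affine (not indicator) data; it cannot be replaced by the body-level argument you propose.

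Your final extension step is also underargued. The correct statement you need is Lemma~\ref{lem:unq} (from \cite{CLM2019homogeneous}): a continuous valuation on $\convs$ is determined by its values on $\indf_P+\lf_y+t$. Your ``inductive application of the valuation identity'' is exactly the difficulty that lemma resolves, and you should invoke it rather than sketch a density argument whose intermediate minima may fall outside $\convs$. In summary: the obstacle you flag at the end is handled by a citable lemma, while the step you treat as routine (getting $h_K+c$ from Lemma~\ref{lem:3body} plus one translation rule) is where the substantive argument lies.
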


We remark that in Artstein-Avidan and Milman \cite{AM09Leg} it is assumed that the transform $\valf$ is bijective.
Here, we assume neither injectivity nor surjectivity.

We will see in the following Theorem \ref{thm:lt2} and Theorem \ref{thm:lt2a} that the result does not change too much if we slightly modify our assumption of translation conjugation.
However, if we apply similar ideas to log-concave functions, it will be somehow surprisingly different since the Laplace transforms also appears.
In fact, the Laplace transform provides a continuous and $\sln$ contravariant valuation on $\convs$ that only satisfies the second relation of translation conjugation, but does not satisfy the first one; see the following Theorem \ref{thm:log} for the case $\e=\sigma = 1$.

Let $\lc:=\{e^{-\u}: \u \in \cvx\}$, that is, the space of all upper semi-continuous, log-concave functions $f:\R^n \to [0,\infty)$.
Let $\lcsc:=\{e^{-\u}: \u \in \convs\}$.
Many concepts and results on log-concave functions and on convex functions are in one-to-one correspondence.
For instance, the duality of the log-concave function $f=e^{-\u} \in \lc$ is
$$f^\circ:=e^{-\u^\ast},$$
and the usual translation $\tau_y f =e^{-\tau_y \u}$; see \cite{AKM04}.
We say that $e^{-\lf_y}f$ is the \emph{dual translation} of $f=e^{-u}$ since $\u + \lf_y$ is the dual translation of $\u \in \convs$.
We call a transform $\valf :\lcsc \rightarrow \FF{n}$ a \emph{translation conjugation on log-concave functions} if
\begin{align}\label{eq:log}
\valf (\tau_y f)= \expfsmall{-\lf_{y}} \valf (f),~\valf (\expfsmall{-\lf_{y}}f)=\tau_y \valf (f)
\end{align}
for every $f \in \lcsc$ and $y \in \R^n$;
and it is \emph{continuous} if $f_i \hc f$ implies $\valf f_i \pc \valf f$.
Here $f_i \hc f$ denotes that $f_i$ hypo-converges to $f_i$, which is equivalent to $(-\log f_i) \epic (-\log f)$.
We also obtain the following characterization of $f^\circ$.

\begin{thm}\label{mthm:log1}
Let $n \ge 3$. A transform $\valf :\lcsc \rightarrow \FF{n}$ is a continuous and $\sln$ contravariant valuation which is a translation conjugation on log-concave functions, if and only if there is a constant $c \in \R$ such that
\begin{align*}
\valf f= c f^\circ
\end{align*}
for every $f \in \lcsc$.
\end{thm}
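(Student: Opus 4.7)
My plan is to mirror the strategy of Theorem \ref{thm:Leg} by transferring the problem from $\lcsc$ to $\convs$ via the bijection $f = e^{-u}$. Setting $W(u) := \valf(e^{-u})$ produces a map $W:\convs\to\FF{n}$ which inherits from $\valf$ the properties of being a continuous and $\sln$ contravariant valuation: the identities $e^{-u_1}\vee e^{-u_2}=e^{-(u_1\wedge u_2)}$ and $e^{-u_1}\wedge e^{-u_2}=e^{-(u_1\vee u_2)}$ convert the valuation property of $\valf$ on $\lcsc$ into that of $W$ on $\convs$, while the equivalence of $u_i\epic u$ and $e^{-u_i}\hc e^{-u}$ handles continuity. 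The translation-conjugation assumption \eqref{eq:log} becomes the multiplicative version
\begin{align*}
W(\tau_y u)=e^{-\lf_y}\,W(u),\qquad W(u+\lf_y)=\tau_y W(u).
\end{align*}

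Applying the second identity to $u=\indf_{\{0\}}$ (which satisfies $\indf_{\{0\}}+\lf_y=\indf_{\{0\}}$), we see that $W(\indf_{\{0\}})$ is $\tau_y$-invariant for every $y$, hence a constant $c\in\R$. The first identity then gives $W(\indf_{\{y\}})(x)=c\,e^{-\langle y,x\rangle}=c\,(e^{-\indf_{\{y\}}})^\circ(x)$ for every $y,x\in\R^n$, so the desired formula $\valf f=c\,f^\circ$ is already pinned down on this family of degenerate log-concave functions. When $c=0$, I plan to propagate the vanishing to all of $\lcsc$ using translation conjugation, the valuation property, and continuity, concluding that $\valf\equiv 0$, which matches the claim with constant $0$.

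Assuming henceforth $c\neq 0$, I would use translation conjugation and continuity to show that $W(u)(x)$ is nowhere zero and of the same sign as $c$, and then define $V(u):=-\log\bigl(W(u)/c\bigr)$. A direct calculation shows $V:\convs\to\FF{n}$ is $\sln$ contravariant and continuous, and that the multiplicative conjugation for $W$ becomes the additive conjugation $V(\tau_y u)=V(u)+\lf_y$, $V(u+\lf_y)=\tau_y V(u)$ of Theorem \ref{thm:Leg}. If $V$ is also a valuation, Theorem \ref{thm:Leg} gives $V(u)=\legt{u}+c'$ for some $c'\in\R$, whence $\valf f=W(u)=c\,e^{-c'}\,e^{-\legt{u}}=(c\,e^{-c'})\,f^\circ$, as desired.

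The main obstacle is verifying that $V$ is itself a valuation, equivalently the multiplicative identity $W(u_1\vee u_2)\,W(u_1\wedge u_2)=W(u_1)\,W(u_2)$. The additive valuation of $W$ alone does not imply this; what is needed is the pointwise pairing
\begin{align*}
\{W(u_1)(x),W(u_2)(x)\}=\{W(u_1\vee u_2)(x),W(u_1\wedge u_2)(x)\}.
\end{align*}
I plan to establish this pairing first on convex indicator functions $\{\indf_K:K\in\MKoon\}$ — where, with the help of Lemma \ref{lem:3body} and a Ludwig-type classification of $\sln$ contravariant valuations on convex bodies, the problem reduces to the support-function identities $h_{K\cup L}=h_K\vee h_L$ and $h_{K\cap L}=h_K\wedge h_L$ valid whenever $K\cup L$ is convex — then extend the pairing to piecewise linear convex functions by the standard valuation-theoretic machinery, and finally conclude in general by continuity. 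Once the pairing is in force, both the additive and multiplicative valuation properties of $W$ hold simultaneously, and the reduction to Theorem \ref{thm:Leg} completes the proof.
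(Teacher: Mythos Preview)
Your reduction to $W(u)=\valf(e^{-u})$ is exactly how the paper begins, but the paper does \emph{not} try to take logarithms and invoke Theorem~\ref{thm:Leg}. Instead it proves the dedicated Theorem~\ref{thm:log} (with $\e=1$, $\sigma=-1$), whose engine is Theorem~\ref{thm:ltn3}: a classification of continuous, $\sln$ covariant, \emph{log}-translation covariant valuations on polytopes. That classification is obtained by combining Lemma~\ref{lem:3body} with the functional equation of Lemma~\ref{lem:220515-2a}, and then restriction Lemmas~\ref{lem:220519-1a} and~\ref{lem:220616-1} together with Lemma~\ref{lem:unq} finish the job. No logarithm is ever taken.

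Your proposed route has real gaps. First, the case $c=0$ cannot be propagated as you suggest: the only functions on which you know $W$ vanishes are the singletons $\indf_{\{y\}}$, and these do not combine under $\vee,\wedge$ (or via hypo-limits from above) to produce $\indf_P$ for higher-dimensional $P$; there is no ``standard machinery'' that pushes this forward. Second, for $c\neq 0$ you need $W(u)(x)\neq 0$ everywhere before you can even define $V=-\log(W/c)$, and nothing in the hypotheses rules out zeros a priori. Third, and most seriously, the pairing $\{W(u_1),W(u_2)\}=\{W(u_1\vee u_2),W(u_1\wedge u_2)\}$ is not accessible by your plan: Lemma~\ref{lem:3body} yields the representation
\[
W(\indf_P)(x)=\zeta(h_P(x))+\zeta(-h_{-P}(x))+\tfrac{1}{|x|}\int_{\R}V_{n-1}(P\cap H_{x,t})\,d\mu(t),
\]
and the support-function identities $h_{K\cup L}=h_K\vee h_L$, $h_{K\cap L}=h_K\wedge h_L$ (valid when $K\cup L$ is convex) only give the pairing if the $\mu$-term is absent. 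Killing $\mu$ requires exactly the log-translation analysis of Theorem~\ref{thm:ltn3}; once that is done the full form of $W(\indf_P+t)$ is already determined and the logarithm step is superfluous. Finally, even if the pairing held on indicators, it is not a valuation-type identity and does not extend to piecewise-affine functions by inclusion--exclusion; that step is unsupported.

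In short, the attempt to reduce the multiplicative problem to the additive Theorem~\ref{thm:Leg} via $-\log$ is circular: the ingredients needed to justify the logarithm already solve the problem directly, which is what the paper does.
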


If we slightly modify the assumption \eqref{eq:log}, then the Laplace transform appears.
Here the Laplace transform of $f \in \lcsc$ is
\begin{align*}
\lap f (x):=\int_{\R^n} \expfsmall{x \cdot y} f(y)\dif y, ~x\in \R^n.
\end{align*}
This version of the Laplace transform was introduced by Klartag and Milman \cite{KM12} which differs from the standard version $\int_{\R^n} \expfsmall{-x \cdot y} f(y)\dif y$
(but there is no essential difference in the characterizations).

\begin{thm}\label{mthm:log}
Let $n \ge 3$. A transform $\valf :\lcsc \rightarrow \FF{n}$ is a continuous and $\sln$ contravariant valuation which satisfies
\begin{align*}
\valf (\tau_y f)= \expfsmall{\lf_{y}} \valf (f),~\valf (\expfsmall{-\lf_{y}}f)=\tau_y \valf (f)
\end{align*}
for every $f \in \lcsc$ and $y \in \R^n$,
if and only if there are constants $c_1,c_2 \in \R$ such that
\begin{align*}
\valf f= \frac{c_1}{f^\circ} + c_2 \lap f
\end{align*}
for every $f \in \lcsc$.
\end{thm}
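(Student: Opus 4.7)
The plan is to reduce the problem to the corresponding classification on $\convs$, namely Theorem \ref{thm:log} (in the case $\e = \sigma = 1$), via the order-reversing bijection $f = e^{-u}$ between $\lcsc$ and $\convs$.

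For sufficiency, one verifies by direct computation that both $1/f^\circ$ and $\lap f$ satisfy the listed properties. Writing $f = e^{-u}$, the identity $(\tau_y u)^\ast = u^\ast + \lf_y$ gives $(\tau_y f)^\circ = e^{-\lf_y} f^\circ$, and $(u + \lf_y)^\ast = \tau_y u^\ast$ gives $(e^{-\lf_y} f)^\circ = \tau_y f^\circ$. For the Laplace piece, the substitution $z = w + y$ shows $\lap(\tau_y f)(x) = e^{x \cdot y} \lap f(x)$, and pulling $e^{-y \cdot z}$ into the argument of $f$ shows $\lap(e^{-\lf_y} f) = \tau_y \lap f$. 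Continuity under hypo-convergence, $\sln$ contravariance, and the valuation property are immediate for each transform and persist under real linear combinations.

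For necessity, define $\Psi: \convs \to \FF{n}$ by $\Psi(u) := \valf(e^{-u})$. Since $u \mapsto e^{-u}$ takes epi-convergence to hypo-convergence and reverses the lattice operations of $\convs$ and $\lcsc$ (with $u_1 \wedge u_2 \in \convs$ corresponding to $e^{-u_1} \vee e^{-u_2} \in \lcsc$, and vice versa), $\Psi$ inherits from $\valf$ the properties of being a continuous $\sln$-contravariant valuation. Using $\tau_y e^{-u} = e^{-\tau_y u}$ and $e^{-\lf_y}e^{-u} = e^{-(u+\lf_y)}$, the two translation relations for $\valf$ translate into
\begin{align*}
\Psi(\tau_y u) = e^{\lf_y}\, \Psi(u), \qquad \Psi(u + \lf_y) = \tau_y \Psi(u),
\end{align*}
which is exactly the hypothesis of Theorem \ref{thm:log} in the case $\e = \sigma = 1$. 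That theorem produces constants $c_1, c_2 \in \R$ with $\Psi(u) = c_1 e^{u^\ast} + c_2 \lap(e^{-u})$ for every $u \in \convs$. Substituting $u = -\log f$ and using $1/f^\circ = e^{u^\ast}$ together with $\lap f = \lap(e^{-u})$ yields $\valf f = c_1/f^\circ + c_2 \lap f$, as claimed.

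The main obstacle lies not in this reduction, which is essentially bookkeeping, but in the underlying Theorem \ref{thm:log} on $\convs$: the introduction's example $\widetilde{\valf}$ shows that a single translation conjugation relation cannot suffice to pin down Laplace-type behavior, so the proof there must exploit both conditions simultaneously together with $\sln$ contravariance (ultimately via Lemma \ref{lem:3body} from \cite{Li2020slnco}) to separate and identify the Legendre-type and Laplace-type components.
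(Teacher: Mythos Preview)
Your reduction is correct and matches the paper's approach exactly: the paper derives Theorem~\ref{mthm:log} from Theorem~\ref{thm:log} by setting $\valf(u)=\valf'(e^{-u})$ and observing that the hypotheses transfer with $\e=\sigma=1$, just as you do. Your verification of the bookkeeping (order reversal of lattice operations, epi-to-hypo convergence, and the two translation identities) is accurate, and your closing remark that the substantive work lies in Theorem~\ref{thm:log} via Lemma~\ref{lem:3body} is also how the paper is structured.
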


Theorem \ref{thm:Leg} and Theorem \ref{mthm:log1} look similar but are essentially quite different.
To have a better understanding of the difference between these two theorems, we can transfer Theorem \ref{mthm:log1} to convex functions as the following Theorem \ref{thm:log} for the case $\e=1$ and $\sigma = -1$ (see short explanations before Theorem \ref{thm:log}).

The rest of the paper is organized as the following.
Some notation, background information, and results related to the Cauchy functional equation are collected in the second and third sections.
Then in \S \ref{sec:vb}, we establish some classifications of valuations on convex bodies connected to the main results.
General theorems including the main results are proved in \S \ref{sec:mf}.
Finally, we utilize dual valuations to find characterizations of the identity transform.

\section{Preliminaries and Notation}\label{s2}
Let $x, y \in \R^n$. The inner product of $x$ and $y$ is denoted by $x \cdot y$.
For $x \in \ro{n}$ and $t \in \R$, the hyperplane $H_{x,t}:=\{y \in \R^n: x \cdot y =t\}$.
The two half-spaces bounded by $H_{x,t}$ are $H_{x,t}^+:=\{y \in \R^n: x \cdot y \ge t\}$ and $H_{x,t}^-:=\{y \in \R^n: x \cdot y \le t\}$.

For $A_1,A_2 \subset \R^n$, we denote the convex hull of $A_1 \cup A_2$ by $[A_1,A_2]$.
Sometimes we do not distinguish between a point and a singleton.

Let $\{e_1,\dots,e_n\}$ be the standard Euclidean basis of $\R^n$.
For $r \le s$, we denote $[r,s]=[re_1,se_1]$ and $[r,s]^n=\sum_{i=1}^n [re_i,se_i]$.
Also, $[r,s] \times [r',s']^{n-1}=[re_1,se_1]+\sum_{i=2}^n [r'e_i,s'e_i]$ for any $r \le s$ and $r' \le s'$.

For $r \in \R$,
\begin{align*}
  \sgn(r)
  :=\begin{cases}
    1, & r>0, \\
    -1, & r<0, \\
    0, &r=0.
  \end{cases}
\end{align*}

The domain of a convex function $\u:\R^n \to (-\infty,\infty]$ is $\dom \u:=\{x \in \R^n: \u(x) < \infty\}$.
The \emph{epigraph} of $\u$ is $\epi \u :=\{(x,t)\in\R^{n+1}:\u(x) \leq t, x \in \dom \u\}$.
The epigraph of $\u \vee \v$ is the intersection of $\epi \u \cap \epi \v$; the epigraph of $\u \wedge \v$ is $\epi \u \cup \epi \v$, and the epigraph of $\u \tilde \wedge \v$ is $[\epi \u, \epi \v]$.

For references to epi-convergence, see \cite{Roc70conv,RW1998VA}.
Here we only recall the definition and some properties needed in this paper.
Let $\{u_i\}_{i=1}^\infty \cup \{\u\} \subset \cvx$.
We say that the sequence of $\u_i$ \emph{epi-converges} to $\u$ ($\u_i \epic \u$) if $\epi u_i \to \epi u$ in the sense of Kuratowski
convergence, that is, $\forall x \in \epi \u$, there exists $x_i \in \epi \u_i$ such that $x_i \to x$; and if $x_{i_j} \in \epi \u_{i_j}$ such that $x_{i_j} \to x$, then $x \in \epi \u$.
Equivalently, $u(x) \leq \liminf _{i \rightarrow \infty} u_{i}\left(x_{i}\right)$ for all $x_i \to x$;
and there exists $x_i \to x$ such that $u(x)=\lim _{i \rightarrow \infty} u_{i}\left(x_{i}\right)$.
We remark that $\epi u_i \to \epi u$ if and only if $\u_i \to \u$ uniformly on any compact set contained in the interior of $\dom \u$.

The Legendre transform is epi-continuous on $\cvx$, that is, $\u_i \epic \u$ implies that $\legt{(\u_i)} \epic \legt{\u}$ for $\u_i, \u \in \cvx$.
For $\convf$, the topology induced by epi-convergence is equivalent to that induced by pointwise convergence.
Since the Legendre transform maps $\convs$ to $\convf$, it shows that $\u_i \epic \u$ implies that $\legt{(\u_i)} \pc \legt{\u}$ for $\u_i, \u \in \convs$.

For the well-definedness and continuity of the Laplace transform on $\lcsc$, we need the following lemma, which seems folklore. But we do not find a good reference and hence give a proof here.
\begin{lem}
Let $\u \in \convs$. Then for all $a>0$, there exists $b \in \R$ such that $\u(y) > a|y|+b$ for all $y \in \R^n$.
Moreover, assume that $\u_i \epic \u$ and $a,b$ satisfying the above condition for $\u$ are fixed. Then $\u_i(y) > a|y|+b$ for all $y \in \R^n$ when $i$ is sufficiently large.
\end{lem}
\begin{proof}
Since $\u$ is super-coercive, the $\min_{x \in \R^n} \u(x)$ exists and is finite. 
We may translate the epigraph of $\u$ so that $\u(o)=\min_{x \in \R^n} \u(x)=0$ and only prove the lemma for this case.

Assume that the first part of the lemma is not true.
Then there exists $a>0$ such that for all $b \in \R^n$, there exists $y_b \in \R^n$ satisfying $\u(y_b) \le a|y_b| + b$.
Then starting from an arbitrary $b_1 \in \R^n$, we find $y_1 \in \R^n$ such that $\u(y_1) \le a|y_1| + b_1$.
The level set $\{x \in \R^n : \u(x) \le \u(y_1)\}$ is a convex compact set since $\u \in \convs$.
Denote by $A_1$ the convex hull of this level set and the ball of $\R^n$ with center $o$ and radius $1$.
Then we can find $b_2 < b_1$ such that $\u(x) \ge 0 > a|x| + b_2$ for all $x \in A_1$.
But by our assumption, there is $y_2 \in \R^n$ such that $\u(y_2) \le a|y_2| + b_2$.
Hence $y_2 \notin A_1$.
Continuing this process, we obtain a sequence $\{y_k\}_{k=1}^\infty$ in $\R^n$ such that 
$$\u(y_k) \le a|y_k| + b_k$$ 
and $b_k < b_{k-1}$ for all $k \ge 2$, and $y_{k} \notin A_{k-1}$, where $A_{k-1}$ is the convex hull of the level set $\{x \in \R^n : \u(x) \le \u(y_{k-1})\}$ and the ball of $\R^n$ with center $o$ and radius $k-1$.
Therefore, $|y_k| \to \infty$ as $k \to \infty$.
However, $\frac{\u(y_k)}{|y_k|} \le a + \frac{b_k}{|y_k|} \le a + b_2$ for $k \ge 2$.
It contradicts $\lim_{|y| \to \infty} \frac{\u(y)}{|y|} = \infty.$

For the second part of the lemma, denote $v(x):=a|x|+b$ for $x \in \R^n$. 
We note that $\epi \u$ is contained in the interior of $\epi v$ by the first part. 
Since $\u_i \epic \u$, we have $\epi \u_i \to \epi \u$ in the sense of Kuratowski and hence $\epi \u_i$ is also contained in the interior of $\epi v$ when $i$ is sufficiently large.
Thus $\u_i(y) > a|y|+b$ for all $y \in \R^n$ when $i$ is sufficiently large.
\end{proof}

By the previous Lemma, for any given $u \in \convs$ and $x \in \R^n$, we can find $a>0$ and $b \in \R$ such that $\u(y) > (|x|+a)|y|+b$ for all $y \in \R^n$.
Thus
\begin{align*}
\lap \ab{\expfsmall{-\u}} (x)
=\int_{\R^n} \expfsmall{x \cdot y -  \u(y)}\dif y
&\le \int_{\R^n} \expfsmall{|x| |y| -  (|x|+a)|y|-b}\dif y \\
&=\int_{\R^n} \expfsmall{-a|y|-b}\dif y \\
&=\int_{0}^{\infty} \expfsmall{-ar-b} r^{n-1} n \omega_n \dif r \\
&=n! a^{-n}\omega_n \expfsmall{-b}<\infty,
\end{align*}
where $\omega_n$ is the volume of $n$-dimensional unit ball.
Let $\u_i, \u \in \convs$ such that $\u_i \epic \u$.
We also have $\u_i(y) > (|x|+a)|y|+b$ for all $y \in \R^n$ when $i$ is sufficiently large.
The epi-convergence of $\u_i$ implies $\u_i \pc \u$ a.e.
Thus, $\u_i \epic \u$ implies that $\lap \expfsmall{-\u_i} \pc \lap \expfsmall{-\u}$ by the dominated convergence Theorem.

The support function $h_K$ and the indicator function $\indf_K$ of $K \in \MK^n$ are two fundamental classes of convex functions.
Their definitions are the following: for $x \in \R^n$,
\begin{align*}
h_K(x):=\max_{y \in K} x \cdot y,
\end{align*}
and
\begin{align*}
\indf_K(x):=\begin{cases}
0, & x\in K, \\
\infty, &x \notin K.
\end{cases}
\end{align*}

We list here some basic properties of the Legendre transform (for all $\u, \v \in \cvx$).
Some have been discussed above, and we omit the proofs of the others.
\begin{enumerate}[(D1)]
  \item \label{pval} Valuation.

  \item $\legt {(\u \circ \phi^{-1})}= \legt{\u} \circ \phi^{t}$, for all $\phi \in \sln$.

  \item $\legt{(\tau_y \u)}=\legt{\u}+\lf_{y}$, for all $y \in \R^n$.

  \item $\legt{(\u+\lf_{y})}=\tau_{y} \legt{\u}$, for all $y \in \R^n$.

  \item $\legt {(\u + t)} =\legt{\u} - t$, for all $t \in \R$.

  \item $\legt {(\u \circ \lambda)} = \legt{\u} \circ \frac{1}{\lambda} $, for all $\lambda \neq 0$.

  \item $\legt {(\lambda \u)} = \lambda (\legt{\u} \circ \frac{1}{\lambda})$, for all $\lambda \neq 0$.

  \item Continuity: $\u_i \epic \u$ implies $\legt{(\u_i)} \epic \legt{\u}$.

  \item \label{pbi} Bijection: $\cvx \to \cvx $.

  \item $\legt{(\legt{\u})} = \u$.

  \item \label{pmon} $\u \leq \v \Longleftrightarrow \legt{\u} \geq \legt{\v}$.

  \item \label{pmax} $\legt{(\u \vee \v)} = \legt{\u} \tilde \wedge \legt{\v}$, $\legt{(\u \tilde \wedge  \v)} = \legt{\u} \vee \legt{\v}$.

  \item \label{phom} $\legt{(\u \Box \v)} = \legt{u} + \legt{\v}$.

  \item \label{pid} $\legt \indf_K = h_K$, for all $K\in \MK^n$.
\end{enumerate}

Here $(\u \circ \lambda)(x) := \u(\lambda x)$ for any $\lambda \in \R$ and $x \in \R^n$, and 
\begin{align*}
\u \Box \v (x) := \inf_{x_1+x_2=x} \u(x_1)+\v(x_2), ~x \in \R^n,
\end{align*}
or equivalently,
\begin{align*}
\epi (\u \Box \v) = \epi \u + \epi \v.
\end{align*}

The characterization of the Legendre transform was established by Artstein-Avidan and Milman \cite{AM09Leg} involving (D\ref{pbi})-(D\ref{pmax}).
Another characterization of the Legendre transform was established by Rotem \cite{MR3062743} using (D\ref{pmon}), (D\ref{phom}) and (D\ref{pid}) (in fact, a weaker assumption than (D\ref{phom}) was used and ``$\Box$" was also characterized).
It is easy to see that (D\ref{phom}) implies (D\ref{pval}).

Sometimes we write the exponential function as $\expf{\cdot}$.
We also list some basic properties of the Laplace transform on $\lcsc$ (for all $f \in \lcsc$).
\begin{enumerate}[(D1)]
  \item \label{plapval} Valuation.

  \item $\lap (f \circ \phi^{-1})= (\lap f) \circ \phi^{t}$, for all $\phi \in \sln$.

  \item \label{plaplogtran} $\lap (\tau_y f)=\expf{\lf_{y}}\lap f$, for all $y \in \R^n$.

  \item $\lap (\expfsmall{-\lf_{y}}f)=\tau_{y} \lap f$, for all $y \in \R^n$.

  \item
  $\lap (\expfsmall{-t} f) = \expfsmall{- t} \lap f$, for all $t \in \R$.

  \item \label{plaphom1} $\lap (f \circ \lambda) = \lambda^{-n} (\lap f) \circ \frac{1}{\lambda} $, for all $\lambda \neq 0$.

  \item \label{plaphom2} $\lap (\lambda f) = \lambda(\lap f) $, for all $\lambda \ge 0$.

  \item \label{plapcont} Continuity: $f_i \hc f$ implies $\lap f_i \pc \lap f$.
\end{enumerate}

For compactly supported $L^1$ functions on $\R^n$, the Laplace transform was characterized by the author jointly with Ma \cite{LM2017Lap} using properties (D\ref{plapval})-(D\ref{plaplogtran}), (D\ref{plaphom1}), and (D\ref{plaphom2}) where the continuity hypothesis is according to the $L^1$-norm.

\section{The Cauchy functional equation and related results}
We say that a function $f:[0,\infty) \to \R$ satisfies the \emph{Cauchy functional equation} if
\begin{align*}
f(a+b) = f(a) + f(b),
\end{align*}
for every $a,b \ge 0$.
If we further assume that $f$ is continuous, then there is a constant $c\in \R$ such that
\begin{align*}
f(r) = cr
\end{align*}
for every $r \ge 0$.
Therefore, we easily get the following.
\begin{lem}\label{lem:220515-1}
If a continuous function $\mathcal{A}:\R^n \to \R^n$ satisfies $\mathcal{A}(x+y)=\mathcal{A}(x)+\mathcal{A}(y)$
for every $x,y \in \R^n$, then $\mathcal{A}$ is a linear transformation on $\R^n$.
\end{lem}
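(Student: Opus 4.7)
The plan is to reduce the multivariate statement to the one-dimensional Cauchy equation that was just recalled, and then assemble the pieces.

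First, I would fix an arbitrary $x \in \R^n$ and consider the restriction $g_x:\R \to \R^n$ defined by $g_x(t) := \mathcal{A}(tx)$. From the additivity of $\mathcal{A}$ it follows immediately that $g_x(s+t) = g_x(s)+g_x(t)$ for all $s,t \in \R$, and by continuity of $\mathcal{A}$, $g_x$ is continuous. Writing $g_x = (g_x^{(1)},\dots,g_x^{(n)})$ coordinatewise, each component $g_x^{(i)}:\R \to \R$ inherits additivity and continuity, so by the Cauchy functional equation recalled just before the lemma (applied separately on $[0,\infty)$ and then extended to $\R$ using $g_x^{(i)}(-t)=-g_x^{(i)}(t)$, which itself follows from $g_x^{(i)}(0)=0$ and additivity), there is a constant $c_i(x) \in \R$ with $g_x^{(i)}(t) = c_i(x)\, t$. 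In vector form this reads $\mathcal{A}(tx) = t\,\mathcal{A}(x)$ for every $t \in \R$ and every $x \in \R^n$.

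Combined with the assumed additivity $\mathcal{A}(x+y) = \mathcal{A}(x) + \mathcal{A}(y)$, this homogeneity immediately upgrades $\mathcal{A}$ to an $\R$-linear map: for $\lambda,\mu \in \R$ and $x,y\in\R^n$,
\[
\mathcal{A}(\lambda x + \mu y) = \mathcal{A}(\lambda x) + \mathcal{A}(\mu y) = \lambda\,\mathcal{A}(x) + \mu\,\mathcal{A}(y).
\]
Hence $\mathcal{A}$ is a linear transformation on $\R^n$.

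There is no real obstacle here; the only mildly delicate point is that the preceding Cauchy statement in the paper is phrased on $[0,\infty)$, so one must notice that additivity forces $\mathcal{A}(0)=0$ and $\mathcal{A}(-x) = -\mathcal{A}(x)$, which allows one to pass from the half-line to all of $\R$ before invoking continuity. Everything else is routine.
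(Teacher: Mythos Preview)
Your proof is correct and follows exactly the route the paper intends: the paper states the lemma immediately after recalling the continuous Cauchy equation on $[0,\infty)$ with the phrase ``Therefore, we easily get the following,'' leaving the details implicit. Your coordinatewise reduction to the one-dimensional Cauchy equation, together with the observation that additivity gives $\mathcal{A}(0)=0$ and $\mathcal{A}(-x)=-\mathcal{A}(x)$ so as to pass from $[0,\infty)$ to all of $\R$, is precisely the routine verification the paper omits.
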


We also get the following useful lemma.
\begin{lem}\label{lem:220515-2}
Let $\sigma \in \R$.
If $f_1,f_2:[0,\infty) \to \R$ are two continuous functions satisfying
\begin{align}\label{eq220515-1}
f_1(s)+f_2(r) - \sigma t = f_1(s+ t) + f_2(r-t)
\end{align}
for every real $r \ge 0 \ge t \ge -s$, then there are constants $c_1,c_2 \in \R$ such that
\begin{align}
&f_1(s)=c_1 s + f_1(0),~\forall s\ge0, \label{eq220520-1}\\
&f_2(r)=c_2 r + f_2(0),~\forall r\ge0, \label{eq220520-2}
\end{align}
and $c_1 - c_2 +\sigma=0$.
\end{lem}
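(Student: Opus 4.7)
The plan is to reduce the equation to a pair of Cauchy functional equations by a separation-of-variables argument. First I would substitute $u = -t \ge 0$ to rewrite the hypothesis in the more convenient form
\[
f_1(s) - f_1(s-u) \;+\; f_2(r) - f_2(r+u) \;+\; \sigma u \;=\; 0
\qquad (s \ge u \ge 0,\ r \ge 0),
\]
which I rearrange as
\[
f_1(s) - f_1(s-u) \;=\; \bigl[f_2(r+u) - f_2(r)\bigr] - \sigma u.
\]

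The left-hand side is independent of $r$ and the right-hand side is independent of $s$; so for every fixed $u \ge 0$ both sides depend only on $u$. Writing this common value as $h(u)$, I immediately obtain the two relations
\[
f_1(s) - f_1(s-u) = h(u)\quad (s \ge u \ge 0), \qquad f_2(r+u) - f_2(r) = h(u) + \sigma u \quad (r,u \ge 0).
\]
Substituting $s = u$ in the first and $r = 0$ in the second identifies $h$ with the normalized functions $g_1(u) := f_1(u) - f_1(0)$ and $g_2(u) := f_2(u) - f_2(0)$, namely $h(u) = g_1(u)$ and $h(u) + \sigma u = g_2(u)$, so $g_2(u) - g_1(u) = \sigma u$.

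Next I would plug $h$ back in to see that each $g_i$ satisfies Cauchy's functional equation on $[0,\infty)$: for $g_2$ directly from $g_2(r+u) - g_2(r) = g_2(u)$, and for $g_1$ by setting $s = a+b$, $u = a$ in $g_1(s) - g_1(s-u) = g_1(u)$ to get $g_1(a+b) = g_1(a) + g_1(b)$. Since $f_1,f_2$ are continuous and $f_i(0) = g_i(0) = 0$, the standard consequence of the Cauchy equation (as recalled in the paragraph preceding Lemma \ref{lem:220515-1}) yields constants $c_1,c_2$ with $g_1(s) = c_1 s$ and $g_2(r) = c_2 r$, giving \eqref{eq220520-1} and \eqref{eq220520-2}. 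Finally the relation $g_2(s) - g_1(s) = \sigma s$ becomes $(c_2 - c_1)s = \sigma s$ for all $s \ge 0$, hence $c_1 - c_2 + \sigma = 0$.

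There is no real obstacle here; the only thing to be careful about is the order of quantifiers in the separation-of-variables step, namely that the valid range $s \ge u \ge 0$ and $r \ge 0$ (with $u$ common) is wide enough so that fixing $u$ and varying $r,s$ independently is legitimate, which it is.
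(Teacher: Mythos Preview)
Your proof is correct and follows essentially the same approach as the paper: both separate variables to see that the difference $f_1(s)-f_1(s+t)$ (up to a $\sigma t$ term) depends only on $t$, then reduce to the Cauchy functional equation and invoke continuity. The only cosmetic difference is that you substitute $u=-t$ at the outset and derive Cauchy's equation for both $g_1$ and $g_2$ symmetrically, whereas the paper works directly with $t$, derives the Cauchy equation only for $f_1-f_1(0)$, and then reads off the linearity of $f_2$ and the relation $c_2=c_1+\sigma$ from the original identity.
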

\begin{proof}
Reformulating \eqref{eq220515-1}, we get
\begin{align*}
f_1(s)-f_1(s+t) - \sigma t = -f_2(r) +f_2(r- t).
\end{align*}
Since the right-hand side of the above relation does not rely on $s$, and the left-hand side of the above relation does not rely on $r$, there is a function $g:[0,\infty) \to \R$ such that
\begin{align}
&g(t) = f_1(s)-f_1(s+ t) - \sigma t \label{eq220515-2}
\end{align}
for every $s \ge -t \ge 0$.

Set $s=-t$ in \eqref{eq220515-2}.
We have $g(t)=f_1(- t) - f_1(0) - \sigma t$.
Replacing the left-hand side of \eqref{eq220515-2} by $f_1(- t) - f_1(0) - \sigma t$, we further get
\begin{align*}
f_1(s) = f_1(s+ t) + f_1(- t) - f_1(0).
\end{align*}
Denote $a:=s+ t$ and $b:= - t$.
Thus the function $f_1 - f_1(0)$ satisfies the Cauchy functional equation.
Together with the continuity of $f_1$, we get \eqref{eq220520-1} for some $c_1\in \R$.
Then \eqref{eq220520-2} follows directly from \eqref{eq220515-1} and \eqref{eq220520-1} with $c_2:=c_1+\sigma$.
\end{proof}

The following lemma is an analog of Lemma \ref{lem:220515-2}, but it does not follow from the Cauchy function equation. Notice that we do not assume continuity in the following Lemma.
\begin{lem}\label{lem:220515-2a}
Let $\sigma \in \R \setminus \{0\}$.
If $f_1,f_2:[0,\infty) \to \R$ are two functions satisfying
\begin{align}\label{eq220515-1a}
\expfsmall{-\sigma t} (f_1(s) + f_2(r)) = f_1(s+ t) + f_2(r- t)
\end{align}
for every real $r \ge 0 \ge t \ge -s$, then there are constants $c_1,c_2,c_1',c_2' \in \R$ such that
\begin{align}
&f_1(s)=
c_1 \expfsmall{-\sigma s} + c_1', ~\forall s \ge 0,\label{eq220520-1a} \\
&f_2(r)=
c_2 \expfsmall{\sigma r} + c_2', ~\forall r \ge 0, \label{eq220520-2a}
\end{align}
and $c_1'+c_2'=0$.
\end{lem}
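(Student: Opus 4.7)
The plan is to separate variables in \eqref{eq220515-1a} so as to reduce it to a one-variable equation for $f_1$, and then use a symmetry argument to solve that equation explicitly. First, setting $t=-s$ with $s\ge 0$ arbitrary and then $r=0$ (or, equivalently, setting $r=0$ in \eqref{eq220515-1a} and renaming $v=-t\in[0,s]$) gives
\begin{align*}
f_1(0)+f_2(v)=\expfsmall{\sigma v}\bigl(f_1(v)+f_2(0)\bigr), \qquad v\ge 0,
\end{align*}
which expresses $f_2$ entirely in terms of $f_1$ and the constants $A:=f_1(0)$, $B:=f_2(0)$.

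I would then substitute this formula for $f_2(r)$ and $f_2(r-t)$ back into \eqref{eq220515-1a}. The $B$ terms cancel, and after rearranging one obtains
\begin{align*}
\expfsmall{\sigma(r-t)}\bigl(f_1(r)-f_1(r-t)\bigr)=f_1(s+t)-\expfsmall{-\sigma t}f_1(s)-\bigl(1-\expfsmall{-\sigma t}\bigr)A.
\end{align*}
The left side depends only on $(r,t)$ and the right side only on $(s,t)$, so each equals a common function $\phi(t)$; evaluating at $r=0$ (or at $s=-t$) pins $\phi$ down. With the substitution $p:=r\ge 0$ and $q:=-t\ge 0$, the resulting identity takes the clean form
\begin{align*}
f_1(p+q)=f_1(p)+\expfsmall{-\sigma p}\bigl(f_1(q)-A\bigr), \qquad p,q\ge 0.
\end{align*}

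Finally, I would exploit the symmetry $f_1(p+q)=f_1(q+p)$. Swapping $p$ and $q$ and subtracting yields
\begin{align*}
\bigl(1-\expfsmall{-\sigma q}\bigr)\bigl(f_1(p)-A\bigr)=\bigl(1-\expfsmall{-\sigma p}\bigr)\bigl(f_1(q)-A\bigr).
\end{align*}
Since $\sigma\ne 0$, the factor $1-\expfsmall{-\sigma p}$ is nonzero for every $p>0$, so the ratio $(f_1(p)-A)/(1-\expfsmall{-\sigma p})$ is independent of $p$; calling this constant $-c_1$ gives $f_1(s)=c_1\expfsmall{-\sigma s}+c_1'$ with $c_1':=A-c_1$, which is \eqref{eq220520-1a}. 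Plugging this back into the formula for $f_2$ produces \eqref{eq220520-2a} with $c_2':=-c_1'$, so $c_1'+c_2'=0$ follows automatically. The one subtle point is that, in contrast to Lemma \ref{lem:220515-2}, continuity is not assumed, so the Cauchy-equation route is unavailable; the symmetry trick sidesteps this entirely, as the constancy of $(f_1(p)-A)/(1-\expfsmall{-\sigma p})$ is a pointwise consequence of the identity and requires no regularity hypothesis.
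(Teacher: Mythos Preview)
Your proof is correct and follows essentially the same approach as the paper. Both arguments separate variables to arrive at the same one-variable functional equation $f_1(p+q)=f_1(p)+e^{-\sigma p}(f_1(q)-f_1(0))$ and then use the swap-and-subtract symmetry trick to conclude; the only cosmetic difference is that you first solve for $f_2$ in terms of $f_1$ and substitute back, whereas the paper separates variables directly in the original identity before specializing.
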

\begin{proof}
Reformulating \eqref{eq220515-1a}, we get
\begin{align*}
f_1(s) - \expfsmall{\sigma t} f_1(s+ t) =  - f_2(r) + \expfsmall{\sigma t}f_2(r - t).
\end{align*}
As the proof of Lemma \ref{lem:220515-2}, there is a function $g$ such that
\begin{align}
&g(t) = f_1(s) - \expfsmall{\sigma t} f_1(s+ t) \label{eq220515-2a}
\end{align}
for every $s \ge -t \ge 0$.

Set $s=- t$ in \eqref{eq220515-2a}.
We have $g(t)=f_1(- t) - \expfsmall{\sigma t} f_1(0)$ and thus
\begin{align*}
f_1(s) = \expfsmall{\sigma t} f_1(s+ t) + f_1(-t)  - \expfsmall{\sigma t} f_1(0).
\end{align*}
Denote $a:=s+ t$ and $b:= -t$. We get
\begin{align*}
f_1(a+b) = \expfsmall{-\sigma b} f_1(a) + f_1(b)  - \expfsmall{-\sigma b} f_1(0)
\end{align*}
for every $a,b \ge 0$.
Swapping $a$ and $b$ in the above relation, we get
\begin{align*}
f_1(a+b) = \expfsmall{-\sigma a} f_1(b) + f_1(a)  - \expfsmall{-\sigma a} f_1(0).
\end{align*}
Therefore,
\begin{align*}
\ab{\expfsmall{-\sigma b}-1} (f_1(a)-f_1(0))= \ab{\expfsmall{-\sigma a}-1} (f_1(b)-f_1(0)).
\end{align*}
Since $\sigma \neq 0$, there is a constant $c_1 \in \R$ such that
\begin{align*}
\frac{f_1(a)-f_1(0)}{\expfsmall{-\sigma a}-1}=c_1
\end{align*}
for every $a > 0$, which implies \eqref{eq220520-1a}.
Then \eqref{eq220520-2a} follows directly from \eqref{eq220515-1a} and \eqref{eq220520-1a} with $c_2:=c_1'+f_2(0)$ and $c_1'+c_2'=0$.
\end{proof}

Sometimes we do not care about $\valf \u(x)$ of $x=o$.
Denote by $\FFo{n}$ the set of functions $f:\ro{n} \to \R$.

\begin{lem}\label{lem:220512-1}
If a transform $\valf :\convs \rightarrow \FFo{n}$ is continuous and $\sln$ contravariant, and there is a function $\mathcal{A}:\R^n \to \R^n$ such that
\begin{align}\label{eq220511-1}
\valf (\tau_y \u)=\valf (\u)+\lf_{\mathcal{A}(y)}
\end{align}
or
\begin{align}\label{eq220511-1a}
\valf (\tau_y \u)=\expf{\lf_{\mathcal{A} (y)}} \valf (\u)
\end{align}
for every $\u\in \convs$ and $y \in \R^n$,
then there is a constant $\sigma \in \R$ such that
\begin{align*}
\mathcal{A}(y) = \sigma y.
\end{align*}
\end{lem}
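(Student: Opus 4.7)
The plan is to exploit the $\sln$ contravariance of $\valf$ together with the translation relation \eqref{eq220511-1} or \eqref{eq220511-1a} to extract the equivariance
\[
\mathcal{A}(\phi y) = \phi\, \mathcal{A}(y), \qquad \phi \in \sln,\ y \in \R^n.
\]
Transitivity of $\sln$ on $\R^n \setminus \{o\}$ together with the structure of the stabilizer of $e_1$ will then force $\mathcal{A}$ to be a scalar multiple of the identity.

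First I would write down the identity $(\tau_y \u) \circ \phi^{-1} = \tau_{\phi y}(\u \circ \phi^{-1})$, which follows from $(\tau_y \u)(\phi^{-1} x) = \u(\phi^{-1}(x - \phi y))$. Applying $\valf$ to both sides, invoking $\sln$ contravariance, and using $\lf_a \circ \phi^t = \lf_{\phi a}$, the additive hypothesis \eqref{eq220511-1} collapses (after subtracting $\valf(\u) \circ \phi^t$) to $\lf_{\phi \mathcal{A}(y)} = \lf_{\mathcal{A}(\phi y)}$, whence $\phi\, \mathcal{A}(y) = \mathcal{A}(\phi y)$. In the multiplicative case \eqref{eq220511-1a} the same manipulation yields
\[
\bigl(\expf{\lf_{\phi \mathcal{A}(y)}(x)} - \expf{\lf_{\mathcal{A}(\phi y)}(x)}\bigr)\, \valf(\u)(\phi^t x) = 0;
\]
choosing any $\u$ for which $\valf \u$ is not identically zero and using continuity of $\valf$ to produce a nonempty open set on which $\valf \u$ does not vanish, the two exponential-affine functions of $x$ must coincide on that open set and hence everywhere, again yielding the equivariance.

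With the equivariance in hand, setting $y = o$ and using $\phi o = o$ gives $\mathcal{A}(o) = \phi\, \mathcal{A}(o)$ for every $\phi \in \sln$; since the only common fixed point of $\sln$ is the origin, $\mathcal{A}(o) = o$. For $y \neq o$ I would use the transitivity of $\sln$ on $\R^n \setminus \{o\}$ (routine for $n \ge 2$: extend $e_1$ and $y$ to bases of equal determinant) to write $y = \phi e_1$, so that $\mathcal{A}(y) = \phi\, \mathcal{A}(e_1)$. For this to be independent of the chosen $\phi$, the vector $a := \mathcal{A}(e_1)$ must be fixed by every element of the stabilizer of $e_1$ in $\sln$; this stabilizer consists of the matrices whose first column is $e_1$ and whose lower-right $(n-1)\times(n-1)$ block lies in $\mathrm{SL}(n-1,\R)$ (with arbitrary remaining entries in the top row), and the condition $\psi a = a$ for all such $\psi$ forces the last $n-1$ coordinates of $a$ to vanish. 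Therefore $a = \sigma e_1$ for some $\sigma \in \R$, and $\mathcal{A}(y) = \sigma\, \phi e_1 = \sigma y$.

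The main technical obstacle is the cancellation step in the multiplicative case, where one must avoid dividing by $\valf(\u)$ on a set where it may vanish. Continuity of $\valf$ is precisely what guarantees a nonempty open set of non-vanishing (for a suitable $\u$), on which equality of two exponential-affine functions then propagates to equality of their affine exponents on all of $\R^n$.
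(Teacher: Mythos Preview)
Your argument is correct and takes a genuinely different route from the paper's. The paper first shows that $\mathcal A$ is additive (from $\tau_{y+\tilde y}=\tau_y\tau_{\tilde y}$) and continuous in $y$ (from the continuity of $\valf$ in $u$), hence linear by the Cauchy functional equation (Lemma~\ref{lem:220515-1}); only then does it bring in $\sln$ contravariance to obtain $\phi A\phi^{-1}=A$ for all $\phi\in\sln$ and conclude that $A$ is scalar. You instead derive the equivariance $\mathcal A(\phi y)=\phi\,\mathcal A(y)$ directly from contravariance and finish with the transitivity/stabilizer argument, never needing $\mathcal A$ to be linear a priori. In the additive case this is a real shortcut: neither the Cauchy equation nor the continuity hypothesis on $\valf$ is used at all.

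One point to tighten in the multiplicative case: the hypothesis ``continuity of $\valf$'' means $u_i\epic u\Rightarrow \valf u_i(x)\to\valf u(x)$ for each fixed $x$; it says nothing about continuity of $x\mapsto\valf u(x)$, so by itself it does not produce an open set of non-vanishing. The repair is short. Either combine continuity in $u$ with $\sln$ contravariance to obtain continuity in $x$ on $\ro{n}$ (for $x_i\to x_0\neq o$ write $x_i=\phi_i^t x_0$ with $\phi_i\to\mathrm{id}$ in $\sln$, so that $\valf u(x_i)=\valf(u\circ\phi_i^{-1})(x_0)\to\valf u(x_0)$); or bypass continuity in $x$ entirely by varying $u$ pointwise: if $\valf u_0(x_0)\neq0$ for some $u_0,x_0$, then for any $x\neq o$ pick $\psi\in\sln$ with $\psi^t x=x_0$ and cancel using $u=u_0\circ\psi^{-1}$, which gives $\lf_{\phi\mathcal A(y)}(x)=\lf_{\mathcal A(\phi y)}(x)$ at every $x\neq o$. (If $\valf\equiv0$ the multiplicative hypothesis is vacuous for every $\mathcal A$ and the conclusion need not hold; the paper's own cancellation steps carry the same tacit assumption.)
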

\begin{proof}
For any $y,\tilde y \in \R$, the hypothesis \eqref{eq220511-1} gives
\begin{align*}
\valf (\u)+\lf_{\mathcal{A}(y+\tilde y)} = \valf (\tau_{y+\tilde y} \u)=\valf (\tau_{\tilde y}\u)+\lf_{\mathcal{A}(y)}
= \valf (\u) + \lf_{\mathcal{A}(\tilde y)} +  \lf_{\mathcal{A}(y)};
\end{align*}
and the hypothesis \eqref{eq220511-1a} gives
\begin{align*}
\expf{\lf_{\mathcal{A} (y +\tilde y)}} \valf (\u) = \valf (\tau_{y+\tilde y} \u)=\expf{\lf_{\mathcal{A} (y)}}  \valf (\tau_{\tilde y}\u)
= \expf{\lf_{\mathcal{A} (y)}+\lf_{\mathcal{A}(\tilde y)}} \valf (\u).
\end{align*}
In both settings, we have
\begin{align*}
\mathcal{A}(y+\tilde y) = \mathcal{A}(y)+\mathcal{A}(\tilde y).
\end{align*}
For $\{y_i\}_{i=1}^\infty \subset \R^n$, we have $\tau_{y_i}\u \epic \tau_y \u$ when $y_i \to y$.
Thus the continuity of $\valf$ and \eqref{eq220511-1} (or \eqref{eq220511-1a}) imply that $\mathcal{A}(\cdot)$ is continuous.
By Lemma \ref{lem:220515-1}, $\mathcal{A}$ is a linear transformation on $\R^n$, that is, there is $A \in \R^{n \times n}$ such that
\begin{align}\label{eq220511-2}
\mathcal{A}(y)=Ay.
\end{align}

Since $\valf$ is $\sln$ contravariant, we have
\begin{align}\label{eq220512-2}
\valf (\tau_{\phi^{-1} y} \u)(\phi^t x)
=\valf ((\tau_{\phi^{-1} y} \u) \circ \phi^{-1})(x)
=\valf (\tau_{y} (\u\circ \phi^{-1}))(x)
\end{align}
for every $\phi \in \sln$.
Together with \eqref{eq220511-2} and the hypothesis \eqref{eq220511-1} (or \eqref{eq220511-1a}), we get
\begin{align*}
\valf \u(\phi^t x) + (A \phi^{-1} y) \cdot (\phi^t x)
= \valf (\u\circ \phi^{-1})(x) + (A y) \cdot x,
\end{align*}
and
\begin{align*}
\expf{(A \phi^{-1} y) \cdot (\phi^t x)}\valf \u(\phi^t x)
= \expf{ (A y) \cdot x} \valf (\u\circ \phi^{-1})(x).
\end{align*}
Now the $\sln$ contravariance of $\valf$ and the arbitrariness of $x,y$ give
\begin{align*}
\phi A \phi^{-1} =A,  ~\forall \phi \in \sln,
\end{align*}
whence $A$ is a scalar matrix (see e.g., \cite[p541]{MR1878556}),
which completes the proof.
\end{proof}

A function $\valf_0:\convs \to \R$ is \emph{$\sln$ invariant} if
\begin{align*}
\valf_0(\u \circ \phi^{-1})=\valf_0(\u)
\end{align*}
for every $\u\in \convs$ and $\phi \in \sln$;
it is  \emph{translation invariant} if
\begin{align*}
\valf_0(\tau_y \u)=\valf_0(\u)
\end{align*}
for every $\u\in \convs$ and $y \in \R^n$.

\begin{lem}\label{lem:220329-1}
If a transform $\valf :\convs \rightarrow \FFo{n}$ is continuous and $\sln$ contravariant, and there is a function $\valf_0:\convs \to \R$ such that
\begin{align}\label{eq220512-1}
\valf (\tau_y \u)=\valf (\u)+\lf_{\valf_0(\u) y},
\end{align}
or
\begin{align*}
\valf (\tau_y \u)=\expf{\lf_{\valf_0(\u) y}} \valf (\u),
\end{align*}
for every $\u\in \convs$ and $y \in \R^n$,
then $\valf_0$ is continuous, $\sln$ invariant and translation invariant.
\end{lem}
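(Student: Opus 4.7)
The plan is to extract each of the three claimed properties of $\valf_0$ from the defining identity by substituting well-chosen arguments, comparing the resulting expressions as functions of $x \in \ro{n}$, and using the fact that $\lf_v(x) = v \cdot x$ recovers $v$ as soon as one tests against $x = v$.

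For translation invariance I would expand $\valf(\tau_{y+\tilde y} u)$ using $\tau_{y+\tilde y} u = \tau_y(\tau_{\tilde y} u)$. In the additive case this yields
\begin{align*}
\valf(u) + \lf_{\valf_0(u)(y+\tilde y)} = \valf(u) + \lf_{\valf_0(u)\tilde y} + \lf_{\valf_0(\tau_{\tilde y}u)\,y},
\end{align*}
so $(\valf_0(u)-\valf_0(\tau_{\tilde y}u))\,y\cdot x = 0$ for every $x \in \ro{n}$ and $y \in \R^n$; the choice $y = x$ forces $\valf_0(\tau_{\tilde y}u)=\valf_0(u)$. The multiplicative case proceeds identically after an analogous expansion, provided one evaluates at a point $x_0 \in \ro{n}$ at which $\valf(u)(x_0)\neq 0$; such a point is stable under $u \mapsto \tau_y u$ because multiplication by the strictly positive factor $\exp\{\lf_{\valf_0(u)y}\}$ preserves the zero set of $\valf u$.

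For $\sln$ invariance I would exploit the identity $\tau_y(u \circ \phi^{-1}) = (\tau_{\phi^{-1}y} u) \circ \phi^{-1}$, which holds because $\phi \in \sln$ is linear. Applying $\valf$, using $\sln$ contravariance and then the hypothesis on each side, I would obtain
\begin{align*}
\valf(u\circ\phi^{-1})(x) + \valf_0(u\circ\phi^{-1})\,y\cdot x = \valf(u)(\phi^t x) + \valf_0(u)\,\phi^{-1}y \cdot \phi^t x
\end{align*}
for every $x \in \ro{n}$. Since $\valf(u)(\phi^t x) = \valf(u\circ\phi^{-1})(x)$ and $\phi^{-1}y \cdot \phi^t x = y\cdot x$, setting $y = x$ yields $\valf_0(u\circ\phi^{-1}) = \valf_0(u)$. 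For continuity, $u_i \epic u$ implies $\tau_y u_i \epic \tau_y u$, so the continuity of $\valf$ gives $\valf u_i \pc \valf u$ and $\valf(\tau_y u_i) \pc \valf(\tau_y u)$ on $\ro{n}$; subtracting (additive case) or dividing and taking logarithms at the distinguished point $x_0$ (multiplicative case) delivers $\valf_0(u_i)\,y\cdot x \to \valf_0(u)\,y\cdot x$, and again $y = x$ closes the argument.

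The additive version is essentially immediate; the main obstacle is the multiplicative one, where every cancellation must be localized at a point at which $\valf u$ does not vanish. My intended remedy is to fix one such $x_0$ once and for all, using the fact that the defining identity forces $\valf(\tau_y u)$ and $\valf u$ to have identical zero sets on $\ro{n}$, so that the same $x_0$ works uniformly in $y$ and reduces the multiplicative case to the additive one via logarithms.
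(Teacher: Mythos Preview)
Your argument in the additive case is essentially the paper's: it too expands $\valf(\tau_y\tau_{\tilde y}u)$ in two ways for translation invariance, uses the identity $\tau_y(u\circ\phi^{-1})=(\tau_{\phi^{-1}y}u)\circ\phi^{-1}$ together with $\sln$ contravariance for $\sln$ invariance, and invokes $\tau_{y}u_i\epic\tau_y u$ for continuity. The paper in fact writes out only the additive case and declares the multiplicative one ``similar''.

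You go further by flagging the obstacle in the multiplicative case, but your remedy does not close it. You propose to work at a point $x_0\in\ro{n}$ with $\valf u(x_0)\neq 0$; however, nothing in the hypotheses guarantees that such a point exists. If $\valf u\equiv 0$ on $\ro{n}$ for some particular $u$, the multiplicative identity reads $0=\exp\{\lf_{\valf_0(u)y}\}\cdot 0$ and imposes no constraint whatsoever on $\valf_0(u)$; one may then alter $\valf_0(u)$ freely and defeat each of the three conclusions at that $u$. So as literally stated, the multiplicative half of the lemma needs a non-degeneracy hypothesis (for instance $\valf u\not\equiv 0$ for every $u$, or that $\valf_0$ is the unique function making the identity hold) before your argument---or the paper's one-line reduction---can go through.
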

\begin{proof}
We only prove the first case since the second case is similar.
Let $x \cdot y \neq 0$.
Notice that \eqref{eq220512-2} holds.
By \eqref{eq220512-2} and \eqref{eq220512-1}, we have
\begin{align*}
\valf (\u \circ \phi^{-1} )(x) + \valf_0(\u \circ \phi^{-1}) x \cdot y
&=\valf (\tau_y (\u \circ \phi^{-1}))(x) \\
&=\valf (\tau_{\phi^{-1} y} \u)(\phi^t x) \\
&=\valf (\u)(\phi^t x) + \valf_0(\u) x \cdot y
\end{align*}
for every $\u \in \convs$ and $\phi \in \sln$.
Together with the $\sln$ contravariance of $\valf$, it gives that $\valf_0$ is $\sln$ invariant.
Also, \eqref{eq220512-1} gives
\begin{align*}
\valf (\tau_{y}\tau_{\tilde y} \u)(x) = \valf (\tau_{\tilde y} \u)(x) + \valf_0(\tau_{\tilde y} \u) x \cdot y = \valf (\u)(x) + \valf_0(\u) x \cdot \tilde y + \valf_0(\tau_{\tilde y} \u) x \cdot y
\end{align*}
and
\begin{align*}
\valf (\tau_{y}\tau_{\tilde y} \u)(x)=\valf (\tau_{y+\tilde y} \u)(x) = \valf (\u)(x)+\valf_0(\u) x \cdot (y+\tilde y).
\end{align*}
Thus $\valf_0$ is translation invariant.
Similarly, $\valf_0$ is continuous.
\end{proof}

The proof of the following lemma is similar to Lemma \ref{lem:220329-1}.
Hence we omit the proof.
\begin{lem}\label{lem:220329-1a}
If a transform $\valf :\convs \rightarrow \FFo{n}$ is a valuation and there is a function $\valf_0:\convs \to \R$ such that
\begin{align*}
\valf (\tau_y \u)=\valf (\u)+\lf_{\valf_0(\u) y}
\end{align*}
for every $\u\in \convs$ and $y \in \R^n$,
then $\valf_0$ is a valuation.
\end{lem}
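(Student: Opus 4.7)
The plan is to mimic the argument proving Lemma \ref{lem:220329-1}, but now using the valuation identity for $\valf$ in place of its $\sln$-contravariance. First I would fix $\u,\v \in \convs$ with $\u \vee \v, \u \wedge \v \in \convs$, so that the quadruple is admissible for the valuation axiom. The crucial observation is that the translation $\tau_y$ preserves $\convs$ and commutes with pointwise maximum and minimum, hence
$$\tau_y \u,\ \tau_y \v,\ \tau_y(\u \vee \v) = (\tau_y \u) \vee (\tau_y \v),\ \tau_y(\u \wedge \v) = (\tau_y \u) \wedge (\tau_y \v)$$
is again an admissible quadruple for every $y \in \R^n$.

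Next I would apply the valuation identity for $\valf$ to this translated quadruple and substitute the hypothesis $\valf(\tau_y w) = \valf(w) + \lf_{\valf_0(w) y}$ into each of the four terms. Subtracting from this the valuation identity for the original untranslated quadruple cancels every $\valf$-term and leaves
$$\lf_{\valf_0(\u \vee \v) y} + \lf_{\valf_0(\u \wedge \v) y} = \lf_{\valf_0(\u) y} + \lf_{\valf_0(\v) y}.$$
Since $\lf$ is linear in its subscript, this asserts that the single linear functional $x \mapsto \bigl(\valf_0(\u \vee \v) + \valf_0(\u \wedge \v) - \valf_0(\u) - \valf_0(\v)\bigr)\,x \cdot y$ vanishes identically on $\ro{n}$.

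Finally, choosing $y = e_1$ and evaluating at $x = e_1$ so that $x \cdot y = 1 \neq 0$, I would divide through to obtain exactly the valuation identity for $\valf_0$. I do not foresee any real obstacle: the only items to verify are that $\tau_y$ preserves $\convs$ and distributes over $\vee$ and $\wedge$, both of which are immediate, and that a pair $(x,y) \in \ro{n} \times \R^n$ with $x \cdot y \neq 0$ exists, which is automatic. The argument is strictly shorter than that of Lemma \ref{lem:220329-1}, which is presumably why the authors omit it.
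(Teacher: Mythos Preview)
Your proposal is correct and is precisely the argument the paper has in mind: it omits the proof, noting only that it is ``similar to Lemma~\ref{lem:220329-1}'', and the similarity is exactly the trick of applying the structural hypothesis (here the valuation identity, there $\sln$-contravariance or the semigroup law $\tau_y\tau_{\tilde y}=\tau_{y+\tilde y}$) to the translated data, expanding each term via $\valf(\tau_y w)=\valf(w)+\lf_{\valf_0(w)y}$, cancelling with the untranslated identity, and reading off the conclusion at some $x$ with $x\cdot y\neq 0$.
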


\section{Valuations on convex bodies}\label{sec:vb}
Let $\MQ^n$ be a subset of $\MK^n$.
We call a map $Z:\MQ^n \to \FFo{n}$ is a \emph{valuation} if
\begin{align*}
Z K + Z L = Z (K \cup L) + Z(K \cap L),
\end{align*}
whenever $K,L,K \cup L, K \cap L \in \MQ^n$.
We call $Z$ \emph{continuous} if $ZK_i (x) \to ZK(x)$ for every $x\in \ro{n}$ whenever $K_i,K \in \MQ^n$ and $K_i \to K$ with respect to the Hausdorff metric.

A map $Z:\MQ^n \to \FFo{n}$ is called \emph{$\sln$ covariant} if
\begin{align*}
Z(\phi P)(x)=ZP(\phi^t x)
\end{align*}
for every $P \in \MQ^n$, $x \in \ro{n}$ and $\phi \in \sln$.

Denote by $\MP^n$ the set of all convex polytopes in $\R^n$.
Let $V_0$ be the Euler characteristic, $V_n$ be the $n$-dimensional volume, $m(P) = \int_P x \dif x$ be the moment vector of $P$, and $\lap P (x)= \int_{P} \expf{x \cdot y}\dif y$.

The main results established in this section are the following theorems.
\begin{thm}\label{thm:id1}
Let $n\ge 3$. A map $Z: \MP^n \to \FFo{n}$ is a continuous and $\sln$ covariant valuation such that there is a function $Z_0: \MP^n \to \R$ for which
\begin{align}\label{eq:thm:id1-1}
Z(P+y)(x)=Z(P)(x) +  Z_0(P)  x \cdot y
\end{align}
for every $P \in \MP^n$, $x \in \ro{n}$, and $y \in \R^n$,
if and only if
there are constants $c_1,c_2,c_3,c_4,c_5 \in \R$ such that
\begin{align*}
Z_0(P)=(c_1-c_2)V_0(P)+c_5V_n(P)
\end{align*}
and
\begin{align}\label{eq:id1}
ZP(x)= c_1 h_{P}(x) +c_2 h_{-P}(x) + c_3V_0(P) +c_4 V_n(P)+ c_5 x \cdot m(P)
\end{align}
for every $P \in \MP^n$ and $x \in \ro{n}$.
\end{thm}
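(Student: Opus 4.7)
For the sufficiency, I would directly check that each of the five terms on the right of \eqref{eq:id1} is a continuous, $\sln$ covariant valuation on $\MP^n$, and then verify \eqref{eq:thm:id1-1}: under $y$-translation one has $h_{P+y}(x) = h_P(x) + x\cdot y$, $h_{-(P+y)}(x) = h_{-P}(x) - x\cdot y$, $V_0$ and $V_n$ are translation invariant, and $m(P+y) = m(P) + V_n(P)\,y$. The translation pieces collect to $((c_1-c_2)V_0(P) + c_5 V_n(P))\,x\cdot y$, which matches the stated formula for $Z_0$.

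The necessity proceeds in three steps. \emph{Step 1: Identify $Z_0$.} Expanding both sides of the valuation identity for $Z$ applied to $(P\cup Q)+y$ and $(P\cap Q)+y$ via \eqref{eq:thm:id1-1} yields $Z_0(P\cup Q) + Z_0(P \cap Q) = Z_0(P) + Z_0(Q)$, so $Z_0$ is a valuation. Iterating \eqref{eq:thm:id1-1} at $y$ and $\tilde y$ and comparing with the translation by $y+\tilde y$ gives $Z_0(P+\tilde y) = Z_0(P)$. Combining \eqref{eq:thm:id1-1} with the $\sln$ covariance of $Z$ and cancelling the common factor $x\cdot \phi y$ gives $Z_0(\phi P) = Z_0(P)$ for every $\phi \in \sln$. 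Fixing $x, y$ with $x \cdot y = 1$ transfers continuity of $Z$ to $Z_0$. Since the only $\sln$-invariant intrinsic volumes on $\MP^n$ are $V_0$ and $V_n$, a Hadwiger-type classification of continuous, translation-invariant, $\sln$-invariant real valuations on $\MP^n$ gives $Z_0 = \alpha V_0 + \beta V_n$ for some $\alpha, \beta \in \R$.

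\emph{Step 2: Reduce to the translation-invariant case.} Set
\[
Z' P(x) := Z P(x) - \alpha\, h_P(x) - \beta\, x \cdot m(P).
\]
Each subtracted piece is itself a continuous, $\sln$ covariant valuation, so $Z'$ inherits these properties. Using $m(P+y) = m(P) + V_n(P) y$ and $V_0(P)=1$ on non-empty polytopes, one computes
\[
Z'(P+y)(x) - Z'P(x) = (\alpha V_0(P) + \beta V_n(P))\,x\cdot y - \alpha\, x\cdot y - \beta V_n(P)\,x\cdot y = 0,
\]
so $Z'$ is translation invariant (and trivially so on $\emptyset$).

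\emph{Step 3: Apply the prior classification.} At this stage $Z'$ is a continuous, $\sln$ covariant, translation invariant valuation from $\MP^n$ to $\FFo{n}$, which is precisely the scope of the author's earlier work \cite{Li2020slnco} (invoked elsewhere as Lemma \ref{lem:3body}). Its conclusion is
\[
Z' P(x) = c\,\bigl(h_P(x) + h_{-P}(x)\bigr) + c_3 V_0(P) + c_4 V_n(P)
\]
for some constants $c, c_3, c_4 \in \R$. Substituting back and relabeling $c_1 := c + \alpha$, $c_2 := c$, $c_5 := \beta$ produces \eqref{eq:id1}, with $c_1 - c_2 = \alpha$ and $c_5 = \beta$, recovering the stated form of $Z_0$. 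The main obstacle is precisely Step~3: invoking (or reproving in the present setting) the translation-invariant $\sln$ covariant classification on $\MP^n$, which is where the hypothesis $n \ge 3$ enters through Lemma \ref{lem:3body}; everything else is structural manipulation of the defining identities.
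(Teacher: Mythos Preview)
Your Steps 1--2 are sound and in fact reproduce the paper's Step~i together with a clean reduction. The gap is in Step~3: what you invoke is not what Lemma~\ref{lem:3body} says. That lemma classifies continuous $\sln$ covariant valuations on $\MPon$ (polytopes \emph{containing the origin}), with \emph{no} translation hypothesis, and its conclusion is
\[
Z'P(x)=\zeta(h_P(x))+\zeta(-h_{-P}(x))+\frac{1}{|x|}\int_{\R} V_{n-1}(P\cap H_{x,t})\dif\mu(t)
\]
for an arbitrary $\zeta\in C(\R)$ and $\mu\in\cms$. It does not hand you $c\bigl(h_P(x)+h_{-P}(x)\bigr)+c_3V_0(P)+c_4V_n(P)$. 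To reach that form you must still (a) use translation invariance of $Z'$ to force $\zeta$ to be affine and $\mu$ to be a constant multiple of Lebesgue measure, and (b) extend from $\MPon$ to $\MP^n$. Part~(a) is exactly the Cauchy-type argument the paper carries out in its Steps~ii--iii via Lemma~\ref{lem:220515-2}: one tests on segments $P=[-re_1,se_1]$ and cubes to derive $\zeta(s+d)+\zeta(-r+d)=\zeta(s)+\zeta(-r)$ and the analogous identity for $\mu$. Part~(b) is the paper's Step~iv. So your reduction has not bypassed any work; it has merely relocated the substantive argument into an unproved black box.

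If you want to keep your subtraction strategy, you should either supply the translation-invariant classification directly (repeating the paper's Steps~ii--iv in the special case $Z_0\equiv 0$), or else observe that once you have Lemma~\ref{lem:3body} for $Z'|_{\MPon}$, the translation invariance and Lemma~\ref{lem:220515-2} pin down $\zeta$ and $\mu$ with $\sigma=c=c'=0$. Either way the functional-equation step is unavoidable, and that---not the structural manipulation---is where the content lies.
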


\begin{thm}\label{thm:ltn3}
Let $n \ge 3$. A map $Z: \MP^n \to \FFo{n}$ is a continuous and $\sln$ covariant valuation such that there is a function $Z_0: \MP^n \to \R \setminus \{0\}$ for which 
\begin{align}\label{def:logt2a}
Z(P+y)(x)=ZP(x) \expf{Z_0(P) x \cdot y}
\end{align}
for every $P \in \MP^n$, $x \in \ro{n}$, and $y \in \R^n$,
if and only if
there are constants $c_1,c_2,c_3 \in \R$ and $\sigma \in \R \setminus \{0\}$ such that
\begin{align*}
Z_0(P)=\sigma
\end{align*}
and
\begin{align}\label{val:slco}
ZP(x)=c_1\expf{\sigma h_{P}(x)}+c_2\expf{-\sigma h_{-P}(x)} + c_3\lap P (\sigma x)
\end{align}
for every $P \in \MP^n$ and $x \in \ro{n}$.
\end{thm}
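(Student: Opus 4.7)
The plan is to split into sufficiency (routine verification) and necessity (first show $Z_0\equiv\sigma$, then apply a classification result to pin down $Z$).

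For the sufficiency direction I would verify that each of the three summands $\expf{\sigma h_P(x)}$, $\expf{-\sigma h_{-P}(x)}$, and $\lap P(\sigma x)$ is a continuous, $\sln$-covariant valuation on $\MP^n$ satisfying \eqref{def:logt2a} with $Z_0\equiv\sigma$. The translation and $\sln$-covariance identities are immediate from $h_{P+y}(x)=h_P(x)+x\cdot y$, $h_{-(P+y)}(x)=h_{-P}(x)-x\cdot y$, $\lap(P+y)(\sigma x)=\expf{\sigma x\cdot y}\lap P(\sigma x)$, and their analogs under $\phi\in\sln$. The valuation property of $\expf{\sigma h_P(x)}$ rests on the observation that whenever $K\cup L$ is convex, $h_{K\cup L}=\max(h_K,h_L)$ and $h_{K\cap L}=\min(h_K,h_L)$, so the multisets $\{h_K(x),h_L(x)\}$ and $\{h_{K\cup L}(x),h_{K\cap L}(x)\}$ coincide. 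Continuity is standard.

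For the necessity direction, the first task is to prove $Z_0$ is constant. Arguments parallel to Lemmas \ref{lem:220515-1}--\ref{lem:220329-1a} produce the basic invariances of $Z_0$: comparing $Z(P+y_1+y_2)(x)$ computed in two ways yields $Z_0(P+y)=Z_0(P)$, while applying $\sln$-covariance to the identity $\phi P+y=\phi(P+\phi^{-1}y)$ together with $\phi^t x\cdot\phi^{-1}y=x\cdot y$ yields $Z_0(\phi P)=Z_0(P)$. Next, applying the valuation identity at $K+y,L+y,(K\cup L)+y,(K\cap L)+y$ and using the hypothesis produces
\[
Z(K\cup L)(x)\expf{Z_0(K\cup L)x\cdot y}+Z(K\cap L)(x)\expf{Z_0(K\cap L)x\cdot y}=ZK(x)\expf{Z_0(K)x\cdot y}+ZL(x)\expf{Z_0(L)x\cdot y}
\]
for every $y\in\R^n$. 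Since the exponentials $y\mapsto\expf{\lambda\cdot y}$ are linearly independent over $\R$ for distinct $\lambda\in\R^n$, for any pair $(K,L)$ on which none of the four values $ZM(x)$ vanish, the only configuration consistent with the relation is that all four exponents $Z_0(M)x$ coincide, forcing $Z_0(K)=Z_0(L)=Z_0(K\cup L)=Z_0(K\cap L)$. Chaining such equalities along families of admissible decompositions---e.g.\ translates of a simplex cut from a larger simplex, or a cube triangulated into tetrahedra---and using the continuity of $Z_0$ on $\{P:ZP(x)\neq 0\}$ (extracted from $Z_0(P)=\log(Z(P+y)(x)/ZP(x))/(x\cdot y)$) together with its affine invariance, forces $Z_0\equiv\sigma$ for some $\sigma\in\R\setminus\{0\}$. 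Once this is in hand, the hypothesis reads $Z(P+y)(x)=ZP(x)\expf{\sigma x\cdot y}$, placing $Z$ squarely in the scope of the classification Lemma \ref{lem:3body} cited from \cite{Li2020slnco}; applying that lemma and matching coefficients on standard simplices then shows that $Z$ is a linear combination of $\expf{\sigma h_P(x)}$, $\expf{-\sigma h_{-P}(x)}$, and $\lap P(\sigma x)$, with linear independence of the three verified on any separating family such as simplices of varying volume.

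I expect the main obstacle to be the global-constancy step for $Z_0$: while the exponential-independence argument cleanly handles a single admissible pair with non-vanishing $Z$-values, propagating the equality across all affine classes of $\MP^n$---bridging different combinatorial types and coping with degenerate polytopes on which $ZP(x)$ may vanish identically in $x$---demands a careful interplay of continuity, the $\sln$-transitivity on affine classes of fixed volume, and valuation decompositions. Once $Z_0\equiv\sigma$ is secured, application of the classification lemma of \cite{Li2020slnco} is the routine endgame.
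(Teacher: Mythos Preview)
Your high-level architecture is sound, but two places diverge from the paper in ways that matter.

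First, Lemma~\ref{lem:3body} does not hand you a three-term exponential form; it gives, for $P\in\MPon$,
\[
ZP(x)=\zeta(h_P(x))+\zeta(-h_{-P}(x))+\tfrac{1}{|x|}\int_{\R}V_{n-1}(P\cap H_{x,t})\,d\mu(t)
\]
with $\zeta$ an arbitrary continuous function and $\mu$ an arbitrary continuous signed Radon measure. Collapsing this to the three stated terms is not ``matching coefficients on simplices''; one must impose the translation relation on concrete one-parameter families (segments for $\zeta$, boxes for $\mu$), obtain functional equations, and solve them via Lemmas~\ref{lem:220515-2} and~\ref{lem:220515-2a}. That analysis is the technical core of the proof and is absent from your sketch.

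Second, the paper does \emph{not} establish $Z_0\equiv\sigma$ globally before invoking Lemma~\ref{lem:3body}; it interleaves. Translation and $\sln$ invariance give $Z_0=\sigma$ only on segments, which already suffices to determine $\zeta$; then, \emph{using the explicit form of $\zeta$}, a separate and nontrivial computation (the paper's Step~iii) forces $Z_0=\sigma$ on full-dimensional boxes, after which $\mu$ can be determined. Your direct route via linear independence of exponentials yields at best a multiset equality $\{Z_0(K),Z_0(L)\}=\{Z_0(K\cup L),Z_0(K\cap L)\}$ when all four values $ZM(x)$ are nonzero, not equality of all four. Worse, in the eventual case $c_1=c_2=0$ one has $ZP\equiv 0$ for every lower-dimensional $P$, so the relation \eqref{def:logt2a} imposes no constraint on $Z_0(P)$ there and your argument extracts nothing---precisely the bridge you would need to link dimension classes. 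The paper handles that degenerate case by a $\gln$-covariance identity for the $\mu$-integral, which again presupposes the Lemma~\ref{lem:3body} representation. So the order of operations you propose is not merely cosmetically different: the constancy of $Z_0$ genuinely leans on the partial structure of $Z$ already obtained.
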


Remark: similar to Lemma \ref{lem:220512-1}, we can replace $Z_0(P)  x \cdot y$ in above theorems by $x \cdot \mathcal{A}(y)$ with a function $\mathcal{A}:\R^n \to \R^n$ (Then $c_5=0$ in \eqref{eq:id1}).

The ``if" parts of those theorems are easy. Therefore, we only deal with the ``only if" parts.

Valuations satisfying \eqref{eq:thm:id1-1} are called \emph{translation covariant valuations} and valuations satisfying \eqref{def:logt2a} are called \emph{log-translation covariant valuations}.

The continuity of a valuation $Z$ and the $\sln$ covariance easily imply that functions in the image of $Z$ have some continuity properties; see \cite[Lemma 9]{Li2020slnco}. That is:
\begin{lem}\label{lem:contin}
Let $n \ge 3$. If $Z: \MP^n \to \FFo{n}$ is continuous and $\sln$ covariant, then $ZP \in \CFo{n}$ for every $P \in \MP^n$.
\end{lem}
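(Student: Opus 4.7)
The plan is to leverage $\sln$ covariance as a bridge, converting the assumed continuity of $Z$ in the convex-body argument into the desired continuity in the point argument $x \in \ro{n}$. Fix $P \in \MP^n$ and $x \in \ro{n}$, and let $\{x_i\}_{i=1}^\infty \subset \ro{n}$ be an arbitrary sequence with $x_i \to x$; the goal is to show $ZP(x_i) \to ZP(x)$.

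The core step is the construction of a sequence $\phi_i \in \sln$ satisfying $\phi_i^t x = x_i$ and $\phi_i \to I$, where $I$ denotes the identity. Equivalently, setting $\psi_i := \phi_i^t$, we need $\psi_i \in \sln$ with $\psi_i x = x_i$ and $\psi_i \to I$. Pick a linear complement $W$ of $\mathrm{span}(x)$ in $\R^n$; this is possible because $x \ne o$ and $n \ge 2$. For $i$ large enough, write $x_i = \lambda_i x + w_i$ with $\lambda_i \in \R$ and $w_i \in W$, where $\lambda_i \to 1$ and $w_i \to o$ as $i \to \infty$. Define $\psi_i$ by $\psi_i x := x_i$ and $\psi_i|_W := \lambda_i^{-1/(n-1)}\, \mathrm{id}_W$. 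A block computation in any basis adapted to the decomposition $\R^n = \mathrm{span}(x) \oplus W$ yields $\det \psi_i = \lambda_i \cdot \lambda_i^{-1} = 1$, so $\psi_i \in \sln$, and clearly $\psi_i \to I$, whence $\phi_i \to I$.

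Now apply the $\sln$ covariance of $Z$ to obtain
\[
ZP(x_i) \;=\; ZP(\phi_i^t x) \;=\; Z(\phi_i P)(x).
\]
Since $\phi_i \to I$ and $P$ is compact, $\phi_i P \to P$ in the Hausdorff metric. The continuity hypothesis on $Z$ (pointwise convergence at the fixed point $x \in \ro{n}$) then gives $Z(\phi_i P)(x) \to ZP(x)$. Combining, $ZP(x_i) \to ZP(x)$, so $ZP$ is continuous at $x$; since $x$ was arbitrary, $ZP \in \CFo{n}$.

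The only genuine content is the construction of $\phi_i$, and even this presents no real obstacle; the argument goes through for every $n \ge 2$. The stronger hypothesis $n \ge 3$ stated in the lemma is inherited from the ambient setting of the section (needed for other results) rather than from this continuity argument itself.
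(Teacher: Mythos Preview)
Your argument is correct and is the natural way to prove this lemma: use $\sln$ covariance to trade movement in the $x$-variable for movement in the body-variable, then invoke the assumed Hausdorff continuity. The paper itself does not write out a proof but simply cites \cite[Lemma~9]{Li2020slnco}, whose proof is exactly this construction; your remark that the argument already works for $n\ge 2$ is also accurate.
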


A map $Z_0: \MP^n \to \R$ is $\sln$ invariant if
\begin{align*}
Z_0(\phi P)=ZP
\end{align*}
for every $P \in \MP^n$ and $\phi \in \sln$;
it is \emph{translation invariant} if
\begin{align*}
Z_0(P+y)=Z_0(P)
\end{align*}
for every $P \in \MP^n$ and $y \in \R^n$.
The following lemma helps us to determine $Z_0$ in Theorem \ref{thm:id1} (but not for $Z_0$ in Theorem \ref{thm:ltn3}).
\begin{lem}[Blaschke; see also Ludwig, Reitzner \cite{Li2020slnco}]\label{lem:MR}
Let $n \ge 2$. A map $Z_0: \MP^n \to \R$ is a continuous, translation invariant and $\sln$ invariant valuation
if and only if
there are $c,c' \in \R$ such that
\begin{align*}
Z_0(P)= cV_0(P)+c' V_n(P)
\end{align*}
for every $P \in \MP^n$.
\end{lem}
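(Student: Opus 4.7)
The ``if'' direction is immediate since both the Euler characteristic $V_0$ and the $n$-volume $V_n$ are continuous, translation-invariant and $\sln$-invariant valuations on $\MP^n$. For the converse, the plan is first to classify $Z_0$ on simplices, and then to extend the formula to all polytopes via inclusion--exclusion on a triangulation.

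For the simplex step, fix a $k$-dimensional simplex $T = [v_0,\ldots,v_k] \subset \R^n$. Translating $v_0$ to the origin and then applying a suitable element of $\sln$, the remaining $k$ vertices can be placed in any configuration sharing the same parallelepiped $k$-volume (when $k=n$) or in any configuration at all (when $k<n$, since $\sln$ acts transitively on linearly independent $k$-tuples in $\R^n$). Thus translation and $\sln$ invariance of $Z_0$ force $Z_0(T)$ to depend only on $V_n(T)$: on $k$-simplices with $k<n$ it is a constant $a_k$, while on $n$-simplices it equals $f(V_n(T))$ for some function $f$. To pin $f$ down, slice a full-dimensional simplex $T$ with vertices $v_0,\ldots,v_n$ at $w := (1-\lambda)v_0 + \lambda v_1$, obtaining $T_1 = [v_0, w, v_2, \ldots, v_n]$ and $T_2 = [w, v_1, v_2, \ldots, v_n]$ with $T_1 \cap T_2$ an $(n-1)$-simplex. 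The valuation axiom yields
\[
f\bigl((1-\lambda)V_n(T)\bigr) + f\bigl(\lambda V_n(T)\bigr) = f(V_n(T)) + a_{n-1}.
\]
Setting $g(t) := f(t) - a_{n-1}$ one recognises Cauchy's equation $g(s)+g(t) = g(s+t)$ on $(0,\infty)$; the assumed continuity of $Z_0$ implies continuity of $g$, so $g(t) = c't$ and hence $f(t) = a_{n-1} + c't$. The same slicing applied to a $k$-simplex for $1 \le k \le n-1$ gives $a_k = a_{k-1}$, so $c := a_0 = a_1 = \cdots = a_{n-1}$ is well-defined and $Z_0(T) = cV_0(T) + c'V_n(T)$ on every simplex.

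For the extension to arbitrary $P \in \MP^n$, fix a triangulation $P = T_1 \cup \cdots \cup T_N$ with simplices $T_i$ of pairwise disjoint interiors whose mutual intersections are again simplices. Iterating the pairwise valuation identity gives
\[
Z_0(P) = \sum_{\emptyset \neq J \subset \{1,\ldots,N\}} (-1)^{|J|+1} Z_0\Bigl(\bigcap_{i\in J}T_i\Bigr),
\]
and the same formula holds with $Z_0$ replaced by $V_0$ or $V_n$. Applying the simplex identity $Z_0 = cV_0 + c'V_n$ termwise gives $Z_0(P) = cV_0(P) + c'V_n(P)$.

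The main technical obstacle lies in this last step: one must verify that every intermediate union appearing when unfolding the pairwise valuation relation remains in $\MP^n$ and that the resulting combinatorial sum is independent of the order of unfolding. This is the standard Groemer--Volland extension principle for valuations on polytopes, which is why the cited reference to \cite{Li2020slnco} (and, classically, Blaschke) is invoked. A minor additional check is that the $\sln$-orbit of an $n$-simplex with one vertex at $o$ and fixed (unsigned) volume indeed covers all such simplices; this follows from transitivity of $\sln$ on $n$-tuples with fixed determinant and the freedom to re-order vertices to switch orientation.
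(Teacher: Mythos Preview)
The paper does not prove this lemma; it is quoted as a classical result (Blaschke, with a pointer to Ludwig--Reitzner) and used as a black box, so there is no paper proof to compare against.

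Your sketch follows the standard classical route and is correct in outline. The one genuinely delicate point is the extension step, and you correctly flag it: the valuation identity is only postulated when $K\cup L$ is convex, so inclusion--exclusion over a triangulation does not follow from naive iteration, and one must appeal to the Groemer (or Volland) extension theorem to pass to the lattice generated by polytopes. That theorem is itself nontrivial, so your argument is ultimately no more self-contained than the paper's bare citation, but it is the right thing to invoke. One minor addition: for the Cauchy step you need $f$ continuous on $(0,\infty)$; this does follow from continuity of $Z_0$, since any continuous one-parameter family of positive volumes can be realised by a Hausdorff-continuous family of $n$-simplices (slide a single vertex along a segment), but it deserves a sentence rather than a bare assertion. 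Finally, in the triangulation step you should make sure all pairwise (and higher) intersections are nonempty so that they lie in $\MP^n$; starring from a fixed vertex of $P$ achieves this.
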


The following Lemma is crucial in this section.
A signed Radon measure is called continuous if the measure of every singleton is null.
We denote by $\cms$ the space of signed and continuous Radon measures on $\R$.

\begin{lem}[Li \cite{Li2020slnco}]\label{lem:3body}
Let $n \ge 3$. A map $Z: \MPon \to \FFo{n}$ is a continuous and $\sln$ covariant valuation
if and only if
there are $\zeta \in \CF{}$ and $\mu \in \cms$ such that
\begin{align*}
ZP(x)&= \zeta(h_P(x)) +\zeta(-h_{-P}(x)) + \frac{1}{|x|} \int_{\R} V_{n-1}(P \cap H_{x,t}) \dif \mu(t)
\end{align*}
for every $P \in \MPon$ and $x \in \ro{n}$.
\end{lem}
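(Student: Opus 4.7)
The if-direction is a routine verification: each of the three summands is separately a continuous $\sln$-covariant valuation in $P$, and the $\frac{1}{|x|}$ prefactor arises as the Jacobian of the slice restriction under the $\sln$-transport described in Step~1 below. My plan for the only-if direction consists of three steps.

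\emph{Step 1 (reduction to the distinguished direction $x=e_n$).} Since $\sln$ acts transitively on $\ro{n}$ via $\phi\mapsto\phi^t e_n$, for each $x\in\ro{n}$ one can pick $\phi_x\in\sln$ with $\phi_x^t e_n=x$; then the $\sln$-covariance of $Z$ gives $ZP(x)=V(\phi_x P)$, where $V(P):=ZP(e_n)$ is continuous on $\MPon$ by Lemma~\ref{lem:contin} and invariant under the stabilizer $G:=\{\phi\in\sln:\phi^t e_n=e_n\}\cong\SL(n-1)\ltimes\R^{n-1}$ (matrices $\begin{pmatrix}B&u\\0&1\end{pmatrix}$ with $B\in\SL(n-1)$, $u\in\R^{n-1}$). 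The subgroup $G$ preserves the horizontal foliation $\{H_{e_n,t}\}_{t\in\R}$ and restricts on each leaf to a unimodular affine map, so $h_P(e_n)$, $-h_{-P}(e_n)$ and the slice profile $t\mapsto V_{n-1}(P\cap H_{e_n,t})$ are $G$-invariants.

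\emph{Step 2 (Hadwiger-type classification of $V$).} Decomposing $P\in\MPon$ by finitely many horizontal hyperplanes and applying the valuation axiom reduces the description of $V$ to analyzing thin slabs $P\cap H_{e_n,[a,b]}$ together with their limiting boundary strata. The $G$-invariance forces the contribution of each slice $P\cap H_{e_n,t}$ to depend only on $\SL(n-1)\ltimes\R^{n-1}$-invariants of that $(n-1)$-dim polytope, which a Hadwiger-type classification in dimension $n-1$ (the analog of Lemma~\ref{lem:MR}) restricts to a combination $a(t)V_0+b(t)V_{n-1}$. Aggregating with continuity yields
\[V(P)=\zeta_+(s_+)+\zeta_-(s_-)+\int_{\R}V_{n-1}(P\cap H_{e_n,t})\,d\mu(t)\]
for continuous $\zeta_\pm:\R\to\R$ and $\mu\in\cms$, where $s_+:=h_P(e_n)$, $s_-:=-h_{-P}(e_n)$; the $V_0$-part on $[s_-,s_+]$ is absorbed into $\mu$ while the discontinuities at the two extreme heights produce the $\zeta_\pm$ boundary terms, whereas the $V_{n-1}$-contributions aggregate into the integral.

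\emph{Step 3 (symmetrization and back-transport).} The element $\phi_0=\operatorname{diag}(1,\ldots,1,-1,-1)\in\sln$ satisfies $\phi_0^t e_n=-e_n$, so $ZP(-e_n)=V(\phi_0 P)$; comparing the Step-2 formula for $V(\phi_0 P)$ with the generic one (testing on a rich family of vertical prisms $P_0\times[a,b]$ with independently variable extreme heights $a\le 0\le b$ and bases $P_0$) forces $\zeta_+=\zeta_-=:\zeta$ together with a compatible $t\mapsto -t$ reflection symmetry of $\mu$ that is absorbed back into $\mu$. Finally, pulling back via $\phi_x$ uses $h_{\phi_x P}(e_n)=h_P(x)$, $h_{-\phi_x P}(e_n)=h_{-P}(x)$ and the Jacobian identity $V_{n-1}(\phi_x(P\cap H_{x,t}))=\frac{1}{|x|}V_{n-1}(P\cap H_{x,t})$ (a direct computation using $\phi_x^t e_n=x$: the restriction $\phi_x|_{H_{x,t}}:H_{x,t}\to H_{e_n,t}$ scales the unit transverse gauge by $|x|$, hence the $(n-1)$-volume by $|x|^{-1}$), yielding the stated formula. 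The main obstacle is Step 2: translating $G$-invariance slice-by-slice into a single global integral formula requires a careful interplay between horizontal dissection, the Hadwiger-type classification on slices, and continuity, particularly to correctly isolate the boundary summands $\zeta_\pm$ coming from degenerate slices at the extreme heights.
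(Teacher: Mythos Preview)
The paper does not prove this lemma; it is quoted verbatim from \cite{Li2020slnco} and used as a black box throughout \S\ref{sec:vb}. There is therefore no proof in the present paper against which to compare your proposal.

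As a separate comment on the sketch itself: Step~1 and the back-transport Jacobian computation in Step~3 are correct and standard. Step~2 is where the real content lies, and your outline (``aggregating with continuity yields \ldots'', ``the $V_0$-part is absorbed into $\mu$'') hides substantial work---making the slice-by-slice Hadwiger classification cohere into a single measure $\mu$ and isolating the boundary terms is exactly the nontrivial part that \cite{Li2020slnco} presumably carries out carefully. Your symmetrization argument in Step~3, however, is circular: computing $ZP(-e_n)$ as $V(\phi_0 P)$ and then applying the Step~2 formula to $\phi_0 P$ gives nothing beyond the Step~2 formula itself, so no relation $\zeta_+=\zeta_-$ and no reflection symmetry of $\mu$ is forced this way. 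Fortunately neither is needed. Since every $P\in\MPon$ contains the origin, one always has $h_P(x)\ge 0\ge -h_{-P}(x)$; hence $\zeta_+$ is only ever evaluated on $[0,\infty)$ and $\zeta_-$ only on $(-\infty,0]$. After shifting one of them by the constant $\tfrac12(\zeta_+(0)-\zeta_-(0))$ so that the values at $0$ agree, the two pieces glue to a single $\zeta\in C(\R)$ without changing the sum $\zeta_+(s_+)+\zeta_-(s_-)$, and $\mu$ is under no symmetry constraint whatsoever.
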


Let $\eta:\R\to \R$ be a locally integrable function. We remark that
\begin{align}\label{eq:fub}
\frac{1}{|x|} \int_{\R} V_{n-1}(P \cap H_{x,t}) \eta(t) \dif t = \int_{P} \eta(x \cdot y) \dif y,
\end{align}
which follows from Fubini's theorem.

\subsection{Translation covariant valuations}\label{sec:n3t}

\begin{proof}[Proof of Theorem \ref{thm:id1}]
The proof is divided into four steps.

\myvskip
\textbf{Step i.}
Similar to Lemma \ref{lem:220329-1} and Lemma \ref{lem:220329-1a}, it is easy to see that the map $P \mapsto Z_0(P)$ is a continuous, translation invariant and $\sln$ invariant, real-valued valuation on $\MP^n$.
By Lemma \ref{lem:MR}, there are $c,c' \in \R$ such that
\begin{align}\label{eq:thm:id1-2}
Z_0(P)=cV_0(P) + c' V_{n}(P)
\end{align}
for every $P \in \MP^n$.

\myvskip
\textbf{Step ii.}
Setting $o \in P \cap (P+y)$ in Lemma \ref{lem:3body}, we have
\begin{align*}
Z(P+y)(x)= \zeta(h_P(x)+x \cdot y) + \zeta(-h_{-P}(x)+x \cdot y) + \frac{1}{|x|} \int_{\R} V_{n-1}((P+y) \cap H_{x,t}) \dif\mu(t)
\end{align*}
and
\begin{align*}
Z(P)(x)= \zeta(h_P(x)) +\zeta (-h_{-P}(x)) + \frac{1}{|x|} \int_{\R} V_{n-1}(P \cap H_{x,t}) \dif\mu(t).
\end{align*}
Choose $P =[-re_1,se_1]$, $x=e_1$ and $y=de_1$ such that $r \ge 0 \ge d \ge -s$.
Therefore, $o \in P \cap (P+y)$, $h_P(x)=s$, $h_{-P}(x)=r$ and $x \cdot y=d$.
Together with \eqref{eq:thm:id1-1} and \eqref{eq:thm:id1-2}, the above two relations imply
\begin{align*}
\zeta(s+d)+\zeta(-r+d)=\zeta(s)+\zeta(-r)+ c d.
\end{align*}
Set $f_1(s):=\zeta(s),~\forall s \ge 0$ and $f_2(r):=\zeta(-r),~\forall r \ge 0$.
We get
\begin{align*}
f_1(s)+f_2(r)+cd=f_1(s+d)+f_2(r-d)
\end{align*}
for every real $r \ge 0 \ge d \ge -s$.
By Lemma \ref{lem:220515-2}, there are $c_1,c_2 \in \R$ such that
\begin{align*}
\zeta(s)=f_1(s) = c_1 s + \zeta(0)
\end{align*}
for any $s \ge 0$;
\begin{align*}
\zeta(-r)=f_2(r)=c_2 r + \zeta(0)
\end{align*}
for any $r \ge 0$;
and
\begin{align}\label{eq1022-1}
c =c_1-c_2.
\end{align}
Denote $c_3:=2\zeta(0)$.
Together with Lemma \ref{lem:3body}, we have
\begin{align}\label{eq:thm:id1-6}
ZP(x)=c_1 h_P(x) + c_2 h_{-P}(x) + c_3V_0(P) + \frac{1}{|x|} \int_{\R} V_{n-1}(P \cap H_{x,t}) \dif \mu(t)
\end{align}
for any $P \in \MPon$ and $x \in \ro{n}$.

\myvskip
\textbf{Step iii.}
Consider
\begin{align*}
Z'P(x) = \frac{1}{|x|} \int_{\R} V_{n-1}(P \cap H_{x,t}) \dif \mu(t), ~x\in \ro{n}.
\end{align*}
By \eqref{eq:thm:id1-1}, \eqref{eq:thm:id1-2}, \eqref{eq1022-1} and \eqref{eq:thm:id1-6}, we have
\begin{align*}
Z'(P+y)(x)=Z'(P)(x) + c' V_n(P) x \cdot y, ~~x\in \ro{n}
\end{align*}
for every $P \in \MPon$ and $y \in -P$.
For real $r \ge 0 \ge d \ge -s$, choose $P=[-r,s]^n$, $y \in -P$, $x=e_1$ and $d=x \cdot y$.
Recalling that $\mu \in \cms$, we have
\begin{align*}
\mu(-r+d,s+d) = \mu(-r,s) + c' (r+s)d.
\end{align*}
Set $c_5=c'$, define $\tilde \mu \in \cms$ by $\tilde \mu(a,b)=\mu(a,b) - \int_{a}^b c_5 t \dif t$ for any $-\infty< a \le b < \infty$,
and set $f_1(s):=\tilde \mu(0,s)$ for any $s \ge 0$ and $f_2(r):=\tilde \mu(-r,0)$ for any $r \ge 0$.
Since $\tilde \mu\in \cms$, we obtain $f_1(0)=f_2(0)=0$,
\begin{align*}
f_1(s+d)+f_2(r-d)=f_1(s)+f_2(r),
\end{align*}
for every real $r \ge 0 \ge d \ge -s$, and that $f_1,f_2$ are continuous.
By Lemma \ref{lem:220515-2}, there is $c_4 \in \R$ such that
$f_1(s) =c_4s = f_2(s)$ for every $s \ge 0$.
Observe that $\tilde \mu$ is uniquely determined by $f_1$ and $f_2$, thus $\dif \tilde \mu (t) = c_4 \dif t$ for some $c_4 \in \R$.
By \eqref{eq:fub}, it turns out
\begin{align}\label{eq:thm:id1-8}
\frac{1}{|x|} \int_{\R} V_{n-1}(P \cap H_{x,t}) \dif\mu(t)
= c_4 V_n(P) + c_5 \int_{P} x\cdot y \dif y
= c_4 V_n(P) + c_5 x \cdot m(P).
\end{align}

\myvskip
\textbf{Step iv.}
\eqref{eq:thm:id1-6} and \eqref{eq:thm:id1-8} show that \eqref{eq:id1} holds for every $P \in \MPon$.
Now, for any $P\in \MP^n$, choose $y \in P$ and replace $P$ by $P-y$ in \eqref{eq:thm:id1-1}.
Then \eqref{eq:id1} holds for every $P \in \MP^n$ and the proof is completed.
\end{proof}

\subsection{Log-translation covariant valuations}
\begin{proof}[Proof of Theorem \ref{thm:ltn3}]
The proof is divided into five steps.

\myvskip
\textbf{Step i.} Similar to Lemma \ref{lem:220329-1}, it is easy to see that $Z_0$ is translation invariant and $\sln$ invariant.
Therefore,
\begin{align}\label{eq1025-1}
0 \neq Z_0([-r,s])=Z_0([-1,1])=:\sigma
\end{align}
for every $r,s \ge 0$.

\myvskip

\textbf{Step ii.}
Lemma \ref{lem:3body}, \eqref{def:logt2a} and \eqref{eq1025-1} imply that
\begin{align*}
\zeta(h_{P+y}(x)) +\zeta(-h_{-P-y}(x))
&=Z(P+y)(x) \notag \\
&= \expf{\sigma x \cdot y}ZP(x)   \notag \\
&=\expf{\sigma x \cdot y} \left(\zeta(h_P(x)) +\zeta(-h_{-P}(x))\right)
\end{align*}
for $P=[-re_1,se_1]$, $x=e_1$ and $y=de_1$ with $r,s \ge 0$ and $r \ge 0 \ge d \ge -s$.
Now we get
\begin{align*}
\zeta(s+d)+\zeta(-r+d) = \expf{\sigma d}(\zeta(s) + \zeta(-r)).
\end{align*}
Set $f_1(s):=\zeta(s)$ and $f_2(r)= \zeta(-r)$.
We get
\begin{align*}
\expf{\sigma d}(f_1(s) + f_2(r))=f_1(s + d) + f_2(r -d)
\end{align*}
for every real $r \ge 0 \ge d \ge -s$.
By Lemma \ref{lem:220515-2a}, there are $c_1,c_2,c_1',c_2' \in \R$ such that
\begin{align*}
&\zeta(s)=f_1(s)=c_1 \expf{\sigma s} + c_1',\\
&\zeta(-r)=f_2(r)=c_2 \expf{-\sigma r} + c_2',
\end{align*}
for every $s \ge 0$ and $r \ge 0$ and $c_1'+c_2'=0$.
Thus
\begin{align}\label{eq1206-1}
\zeta(h_P(x))+\zeta(-h_{-P}(x)) = c_1 \expf{\sigma h_P(x)} + c_2 \expf{-\sigma h_{-P}(x)}
\end{align}
for every $P \in \MPon$;
and hence by Lemma \ref{lem:3body} and \eqref{def:logt2a},
\begin{align*}
&\ab{c_1 \expf{\sigma h_P(x)} + c_2 \expf{-\sigma h_{-P}(x)}} \expf{\sigma x \cdot y} \\
&= Z(P+y)(x)\\
&= \expf{Z_0(P) x \cdot y}  ZP(x) \\
&= \ab{c_1 \expf{\sigma h_P(x)} + c_2 \expf{-\sigma h_{-P}(x)}} \expf{Z_0(P) x \cdot y}
\end{align*}
for every $P \in \MPon$ with $\dim P <n$.
If $c_1$ or $c_2$ is not zero, we have
\begin{align}\label{eq1211-2}
Z_0(P)=\sigma;
\end{align}
if $c_1=c_2=0$, then $Z(P)(x)=0$, we can still assume that \eqref{eq1211-2} holds.

\myvskip
\textbf{Step iii.} We claim that
\begin{align}\label{eq1211-3}
Z_0([-r,s] \times [-1/2,1/2]^{n-1})=\sigma
\end{align}
for any $r,s \ge 0$.

\myvskip
Indeed, for the case $c_1=c_2=0$, by \eqref{eq:fub} and since $\mu \in \cms$ can be approximated by functions of $L^1(\R)$ in the sense of weak(*) topology of Radon measures, we observe that
\begin{align*}
\frac{1}{|x|} \int_{\R} V_{n-1}((\phi P+y) \cap H_{x,t}) \dif \mu(t)=|\det\phi| \frac{1}{|\phi^t x|} \int_{\R} V_{n-1}((P+\phi^{-1}y) \cap H_{\phi^t x,t}) \dif\mu(t)
\end{align*}
for any $\phi \in \gln$, $P\in \MP^n$, $x\in \ro{n}$ and $y \in \R^n$.
Together with \eqref{def:logt2a}, \eqref{eq1206-1} and Lemma \ref{lem:3body},
we obtain that $Z_0(\phi P)=Z_0(P)$ for any $P \in \MP^n$ and $\phi \in \gln$.
Recalling that $Z_0$ is translation invariant, we have
$Z_0([-r,s] \times [-1/2,1/2]^{n-1})=Z_0 ([-1,1] \times [-1/2,1/2]^{n-1})$ for every $r,s \ge 0$.
Therefore, we trivially say that \eqref{eq1211-3} holds by the last remark in Step ii.

In the following, assume that one of $c_1,c_2$ is not zero.
By the translation invariance of $Z_0$, we denote $\sigma'(r+s)=Z_0([-r,s] \times [-1/2,1/2]^{n-1})$.
To show that \eqref{eq1211-3} holds, we only need
\begin{align*}
\sigma'(s)= \sigma
\end{align*}
for any $s>0$.
By \eqref{def:logt2a}, \eqref{eq1211-2}, and the valuation property, we have
\begin{align*}
&Z([-s,s]\times [-1/2,1/2]^{n-1})(x)\expf{\sigma'(2s) x\cdot y} +Z(\{0\}\times [-1/2,1/2]^{n-1})(x)\expf{\sigma x\cdot y} \\
&=Z([-s,s]\times [-1/2,1/2]^{n-1}+y)(x)+Z(\{0\}\times [-1/2,1/2]^{n-1}+y)(x)\\
&=Z([-s,0]\times [-1/2,1/2]^{n-1}+y)(x)+Z([0,s]\times [-1/2,1/2]^{n-1}+y)(x) \\
&=Z([-s,0]\times [-1/2,1/2]^{n-1})(x)\expf{\sigma'(s) x\cdot y}
  +Z([0,s]\times [-1/2,1/2]^{n-1})(x)\expf{\sigma'(s) x\cdot y}
\end{align*}
for any $s>0$, $x\in\ro{n}$, and $y \in \R$.
Let first $x=re_1$ for $r>0$ and $d=x\cdot y$.
Together with Lemma \ref{lem:3body} and \eqref{eq1206-1}, we have
\begin{align*}
&\ab{c_1e^{\sigma rs}+c_2e^{-\sigma rs}+\frac{1}{r}\mu(-rs,rs)}e^{\sigma'(2s) d} + \ab{c_1+c_2}e^{\sigma d} \notag\\
&=\ab{c_1+c_2e^{-\sigma rs}+\frac{1}{r}\mu(-rs,0)}e^{\sigma'(s) d} + \ab{c_1e^{\sigma rs}+c_2+\frac{1}{r}\mu(0,rs)}e^{\sigma'(s) d}.
\end{align*}
Similarly, let $x=2rs e_n$ for $r>0$ and keep $d=x\cdot y$.
We have
\begin{align*}
&\ab{c_1e^{\sigma rs}+c_2e^{-\sigma rs}+\frac{1}{r}\mu(-rs,rs)}e^{\sigma'(2s) d} + \ab{c_1e^{\sigma rs}+c_2e^{-\sigma rs}}e^{\sigma d} \notag\\
&=\ab{c_1e^{\sigma rs}+c_2e^{-\sigma rs}+\frac{1}{2r}\mu(-rs,rs)}e^{\sigma'(s) d} + \ab{c_1e^{\sigma rs}+c_2e^{-\sigma rs}+\frac{1}{2r}\mu(-rs,rs)}e^{\sigma'(s) d}.
\end{align*}
Recalling $\mu(0)=0$, the difference of above two relations gives
\begin{align*}
\ab{c_1 \ab{1-e^{\sigma rs}} + c_2 \ab{1-e^{-\sigma rs}}} \ab{e^{\sigma d}-e^{\sigma'(s) d}}=0.
\end{align*}
Since $c_1,c_2$ are not all zero, there must be a $r >0$ such that $c_1 \ab{1-e^{\sigma rs}} + c_2 \ab{1-e^{-\sigma rs}} \neq 0$.
Thus $\sigma'(s)=\sigma$.

\myvskip

\textbf{Step iv.}
By \eqref{def:logt2a}, \eqref{eq1206-1}, \eqref{eq1211-3} and Lemma \ref{lem:3body},
\begin{align*}
\frac{1}{|x|} \int_{\R} V_{n-1}((P+y) \cap H_{x,t}) \dif\mu(t) =\expf{\sigma x \cdot y} \frac{1}{|x|} \int_{\R} V_{n-1}(P \cap H_{x,t}) \dif\mu(t)
\end{align*}
for every $P=[-r,s] \times [-1/2,1/2]^{n-1}$ for $r \ge 0,s \ge 0$ and $x=e_1$, $y \in -P$.
Assume $d:=x\cdot y \le 0$.
Thus $r \ge 0 \ge d \ge -s$.
Set $f_1(s)=\mu(0,s)$ for any $s \ge 0$ and $f_2(r)=\mu(-r,0)$ for any $r \ge 0$.
Since $\mu\in \cms$, we get
\begin{align*}
f_1(s+d)+f_2(r-d)=\expf{\sigma d}(f_1(s)+f_2(r)),
\end{align*}
and that $f_1,f_2$ are continuous.
By Lemma \ref{lem:220515-2a}, there are $c_3,c_4,c_3',c_4' \in \R$ such that
\begin{align*}
&f_1(s)=\frac{c_3}{\simga} \expfsmall{\sigma s} + \frac{c_3'}{\simga},\\
&f_2(r)=\frac{c_4}{\simga} \expfsmall{-\sigma r} + \frac{c_4'}{\simga},
\end{align*}
for every $s \ge 0$ and $r \ge 0$ and $c_3'+c_4'=0$.
Observe that $f_1(0)=f_2(0)=0$, we further have
\begin{align*}
c_3=-c_3'=c_4'=-c_4.
\end{align*}
Thus $\dif \mu(t)= c_3 \expfsmall{\sigma t}$.
Together with \eqref{eq:fub}, it turns out
\begin{align}\label{eq0507-1}
\frac{1}{|x|} \int_{\R} V_{n-1}(P \cap H_{x,t}) \dif \mu(t) = c_3\lap P (\sigma x)
\end{align}
with some constant $c_3 \in \R$.

\myvskip
\textbf{Step v.}
By \eqref{eq1206-1}, \eqref{eq0507-1} and Lemma \ref{lem:3body}, we get that \eqref{val:slco} holds for every $P \in \MPon$ and then $Z_0(P)=\sigma$ for every $P \in \MPon$.
Similar to Step iv in the proof of Theorem \ref{thm:id1}, we get \eqref{val:slco} for every $P \in \MP^n$. Then $Z_0(P)=\sigma$ for every $P \in \MP^n$.
\end{proof}

\section{Main results and generalizations}\label{sec:mf}

We require the following lemma of \cite{CLM2019homogeneous}.
\begin{lem}\label{lem:unq}
If $\valf : \convs \to \R$ is a continuous valuation, then it is uniquely determined by $\valf (\indf_P +\lf_y+t)$ for every $P \in \MP^n$, $y \in \R^n$ and $t \in \R$.
\end{lem}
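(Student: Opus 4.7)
The plan is to combine an epi-density argument with an inductive use of the valuation identity on cellwise decompositions of polyhedral functions. Set
\[
\mathcal{D} := \{ \indf_P + \lf_y + t : P \in \MP^n,\ y \in \R^n,\ t \in \R\} \subset \convs
\]
and let $\mathcal{E} \subset \convs$ denote the cone of \emph{polyhedral functions}, i.e.\ functions of the form $u + \indf_P$, with $P \in \MP^n$ and $u$ piecewise affine and convex on $P$. (Every such function lies in $\convs$ because $\dom(u + \indf_P) = P$ is compact, so super-coercivity is automatic.) First I would show that $\mathcal{E}$ is epi-dense in $\convs$: given $u \in \convs$ and an exhausting sequence of polytopes $P_k \nearrow \R^n$, the functions $u_k := \max\bigl(u,\,\lf_{y_k}+t_k\bigr)$ with appropriately large affine minorants, suitably truncated to polyhedral epigraphs approximating $\epi u$, epi-converge to $u$. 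Once this is in hand, the continuity of $\valf$ reduces the problem to showing $\valf$ is determined on $\mathcal{E}$ by its values on $\mathcal{D}$.

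The core of the argument is a cell-by-cell induction on an arbitrary polyhedral function $w = u + \indf_P \in \mathcal{E}$. Fix a polyhedral subdivision $P = \bigcup_{i=1}^m P_i$ on which $u$ coincides with an affine function $\ell_i = \lf_{y_i} + t_i$. Order the cells so that $Q_k := \bigcup_{i \le k} P_i$ is convex for every $k$ (always possible for a polyhedral subdivision of a polytope, e.g.\ by a shelling). Set $w_k := u|_{Q_k} + \indf_{Q_k}$, so $w_1 = \indf_{P_1} + \ell_1 \in \mathcal{D}$ and $w_m = w$. The key identity is
\[
w_{k+1} \;=\; w_k \wedge (\indf_{P_{k+1}} + \ell_{k+1}),
\qquad
w_k \vee (\indf_{P_{k+1}} + \ell_{k+1}) \;=\; \indf_{Q_k \cap P_{k+1}} + \ell_{k+1},
\]
where the second equality uses that $\ell_{k+1}$ is a supporting affine functional to $u$ at any interior point of $P_{k+1}$ and hence $\ell_{k+1} \le u$ throughout $P$, forcing the pointwise maximum on $Q_k \cap P_{k+1}$ to be $u = \ell_{k+1}$ there. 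A direct convexity check shows that all four of $w_k$, $\indf_{P_{k+1}} + \ell_{k+1}$, $w_{k+1}$, and $\indf_{Q_k \cap P_{k+1}} + \ell_{k+1}$ lie in $\convs$, so the valuation property applies and yields
\[
\valf(w_{k+1}) = \valf(w_k) + \valf(\indf_{P_{k+1}} + \ell_{k+1}) - \valf(\indf_{Q_k \cap P_{k+1}} + \ell_{k+1}).
\]

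Iterating this identity from $k=1$ to $k=m-1$ expresses $\valf(w) = \valf(w_m)$ as an explicit alternating sum of values $\valf(\indf_Q + \lf_y + t)$ with $Q$ ranging over polytopes built from the cells $P_i$ and their intersections with unions $Q_k$. This shows that $\valf$ restricted to $\mathcal{E}$ is determined by its restriction to $\mathcal{D}$, and then epi-density together with continuity propagates this to all of $\convs$.

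The main obstacle is the verification that at each inductive step the four functions really do all lie in $\convs$—in other words, that the minimum $w_k \wedge (\indf_{P_{k+1}} + \ell_{k+1})$ is convex. This is where the choice of a shelling order for the cell decomposition (ensuring each $Q_{k+1}$ is convex) together with the fact that the $\ell_i$ are pieces of the \emph{convex} function $u$ is essential; without the shelling or the convexity of $u$, the identity would fall outside the domain where the valuation relation is assumed to hold. The density step is more routine but also requires care to preserve super-coercivity while approximating a general $u \in \convs$ by elements of $\mathcal{E}$.
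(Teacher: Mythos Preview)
For context, note that the paper does not prove this lemma; it is quoted from Colesanti--Ludwig--Mussnig \cite{CLM2019homogeneous}. Your overall two-step strategy (epi-density of piecewise-affine functions with polytopal domain, followed by an inductive reduction to $\mathcal D$ via the valuation identity) is indeed the strategy used there, but your inductive step contains a genuine gap. You claim that the cells $P_1,\dots,P_m$ of the linearity subdivision can always be ordered so that every partial union $Q_k=\bigcup_{i\le k}P_i$ is \emph{convex}, ``e.g.\ by a shelling''. A shelling guarantees only that partial unions are topological balls; it does not force convexity, and in fact no such ordering need exist even when the subdivision comes from a convex function. For $u(x_1,x_2)=\max(x_1,\,x_2,\,-x_1-x_2)$ restricted to a square about the origin, the three maximal linearity cells meet at the origin with angle $120^\circ$ each, and the union of \emph{any} two of them has a $240^\circ$ reflex angle at the origin, hence is nonconvex. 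With $Q_k$ nonconvex the function $w_k$ has nonconvex domain and lies outside $\convs$, so the valuation identity cannot be applied at that step and the induction breaks down.

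The standard repair, and essentially what is done in \cite{CLM2019homogeneous}, is to run the induction not by \emph{adding cells} but by \emph{cutting with hyperplanes}. If $v\in\convs$ has polytopal domain and $H$ is a hyperplane meeting $\dom v$, then $v+\indf_{H^+}$, $v+\indf_{H^-}$ and $v+\indf_H$ all lie in $\convs$, and
\[
(v+\indf_{H^+})\wedge(v+\indf_{H^-})=v,\qquad (v+\indf_{H^+})\vee(v+\indf_{H^-})=v+\indf_H,
\]
so the valuation identity yields $\valf(v)=\valf(v+\indf_{H^+})+\valf(v+\indf_{H^-})-\valf(v+\indf_H)$. Applying this successively with the finitely many hyperplanes that carry the facets of the linearity subdivision of $u+\indf_P$ expresses $\valf(u+\indf_P)$ as an integer combination of values $\valf(\ell+\indf_Q)$, where each $Q$ is an intersection of $P$ with half-spaces (hence convex) and small enough that $u|_Q$ is a single affine function; these are values on $\mathcal D$, and the density step then finishes as you outlined.
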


\subsection{Generalizations of Theorem \ref{thm:Leg}}
We prove the following two generalizations of Theorem \ref{thm:Leg}.
\begin{thm}\label{thm:lt2}
Let $n \ge 3$. A transform $\valf :\convs \rightarrow \FF{n}$ is a continuous and $\sln$ contravariant valuation such that there exist $\e \in \R$ and a function $\mathcal{A}:\R^n \to \R^n$ for which
\begin{align*}
\valf (\tau_y \u)=\valf (\u)+\lf_{\mathcal{A}(y)}, ~\valf (\u+\lf_{y})=\tau_{\e y} \valf (\u),
\end{align*}
for every $\u \in \convs$ and $y \in \R^n$,
if and only if there are constants $c,c',\sigma \in \R$ such that $\mathcal{A}y=\sigma y$ for every $y \in \R^n$ and
\begin{align*}
\valf \u (x)= \begin{cases}
\e \sigma \u^\ast(x/\e) + c, & \e \neq 0,\\
c' \delta_x^0 + c ,&  \e = 0,
\end{cases}
\end{align*}
for every $\u \in \convs$ and $x \in \R^n$.
\end{thm}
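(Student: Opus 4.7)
The ``if'' part is a direct verification using properties (D\ref{pval})--(D\ref{pid}) of the Legendre transform in \S\ref{s2}; I focus on the ``only if'' direction.

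The plan is first to reduce to polytopal data. Lemma~\ref{lem:220512-1} applied to the first translation relation gives $\sigma\in\R$ with $\mathcal{A}(y)=\sigma y$; Lemma~\ref{lem:unq} together with the identity $\valf(\indf_P+\lf_y+t)(x)=\valf(\indf_P+t)(x-\e y)$ (from the second translation) reduces the task to identifying $\valf(\indf_P+t)$ for $P\in\MP^n$, $t\in\R$. For each fixed $t$, the map $Z^tP(x):=\valf(\indf_P+t)(x)$ is a continuous, $\sln$ covariant valuation on $\MP^n$ with $Z^t(P+y)(x)=Z^tP(x)+\sigma x\cdot y$; by Theorem~\ref{thm:id1} and the remark after it (which handles the linear $\mathcal{A}$ and forces $c_5=0$),
\[
Z^tP(x)=c_1(t)h_P(x)+c_2(t)h_{-P}(x)+c_3(t)+c_4(t)V_n(P),\quad c_1(t)-c_2(t)=\sigma.
\]

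Next I combine both translation relations to make $c_1,c_2,c_4$ independent of $t$. Applying the first translation to $u=\indf_P+\lf_{y_0}$, expanding $\tau_y(\indf_P+\lf_{y_0})=\indf_{P+y}+\lf_{y_0}-y\cdot y_0$, collapsing $\lf_{y_0}$ on both sides via the second translation, and invoking the first translation on $\indf_P-y\cdot y_0$, a short rearrangement gives
\[
\valf(\indf_P+s)(x)=\valf(\indf_P)(x)-\sigma\e s\qquad(s\in\R).
\]
Hence $c_i(t)\equiv c_i$ for $i=1,2,4$, and $c_3(t)=c-\sigma\e t$ with $c:=c_3(0)$.

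The main obstacle is to pin down $c_4=0$ and $(c_1,c_2)=(\sigma,0)$ when $\e>0$ (respectively $(0,-\sigma)$ when $\e<0$). My plan is to introduce the candidate $\Phi u(x):=\e\sigma u^\ast(x/\e)+c$, which satisfies all hypotheses of the theorem with the same parameters by the ``if'' direction, and study the difference $\Psi:=\valf-\Phi$: it is a continuous, $\sln$ contravariant, translation-invariant valuation satisfying $\Psi(u+\lf_y)=\tau_{\e y}\Psi u$, so repeating the earlier analysis (now with $\sigma$ replaced by $0$) gives $\Psi(\indf_P+t)(x)=\tilde c_1(h_P+h_{-P})(x)+\tilde c+\tilde c_4 V_n(P)$. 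Testing $\Psi$ on a full-domain $u\in\convs$ such as $u(x)=|x|^2/2$, approximated by iterated $\vee$-suprema $u_{R,N}=\bigvee_{j=1}^N(\indf_{B_R}+\lf_{y_j}-u^\ast(y_j))$, the valuation property together with continuity expresses $\Psi u(x)$ in terms of $\tilde c_1,\tilde c_4$ times functionals that blow up as $R,N\to\infty$, forcing $\tilde c_1=\tilde c_4=0$ for finiteness of $\Psi u(x)$; a residual $\sln$ invariance argument then gives $\tilde c=0$, so by Lemma~\ref{lem:unq}, $\valf=\Phi$ on all of $\convs$. For the degenerate case $\e=0$, the second translation collapses to $\valf(u+\lf_y)=\valf u$; combining with the first translation forces $\sigma=0$, and $\sln$ contravariance on $\R^n\setminus\{o\}$ (transitive for $n\ge2$) makes $\valf u$ constant off the origin, yielding $\valf u(x)=c'\delta_x^0+c$.
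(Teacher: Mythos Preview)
Your first three paragraphs are fine and parallel the paper's Step~i: Lemma~\ref{lem:220512-1} gives $\mathcal A(y)=\sigma y$, Theorem~\ref{thm:id1} (with its remark, so $c_5=0$) gives the polytope formula, and your computation $\valf(\indf_P+s)=\valf(\indf_P)-\e\sigma s$ is correct.

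The gap is in your main step, the blow-up argument for $\tilde c_1=\tilde c_4=0$. Your proposed approximation
\[
u_{R,N}=\bigvee_{j=1}^N\bigl(\indf_{B_R}+\lf_{y_j}-u^\ast(y_j)\bigr)
\]
cannot be unfolded by the valuation identity: all summands share the common domain $B_R$, so the pairwise minima $u_i\wedge u_j=\indf_{B_R}+\min(\lf_{y_i}+t_i,\lf_{y_j}+t_j)$ are \emph{concave} on the interior of $B_R$ and hence not in $\convs$. The valuation relation is only assumed when all four of $u_1,u_2,u_1\vee u_2,u_1\wedge u_2$ lie in $\convs$, so you never get an inclusion--exclusion expression for $\Psi(u_{R,N})$ in terms of the known values $\Psi(\indf_P+\lf_y+t)$. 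Without that, there is no ``functional that blows up'' to read off, and the argument stalls. (Your residual claim ``$\tilde c=0$'' is unnecessary---any surviving constant is absorbed into $c$---but that is harmless.)

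The paper circumvents exactly this obstruction in Lemma~\ref{lem:220519-1}: instead of a common domain it uses pieces $u_i=\indf_{[i-1,i]\times[0,1]^{n-1}}+\lf_{ie_1}-\tfrac{i^2-i}{2}$ with interior-disjoint supports, so that $v_m=u_1\wedge\cdots\wedge u_m$ is genuinely convex and each $u_i\vee u_{i+1}$ collapses to an indicator on a facet. The valuation identity then applies cleanly, $\valf v_m(re_1)$ is computed explicitly as a polynomial in $m$, and finiteness of the limit forces the coefficients. This same construction is also what yields $\sigma=0$ when $\e=0$; your sentence ``combining with the first translation forces $\sigma=0$'' does not follow from the two translation relations alone and needs an argument of this type.
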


\begin{thm}\label{thm:lt2a}
Let $n \ge 3$. A transform $\valf :\convs \rightarrow \FF{n}$ is a continuous and $\sln$ contravariant valuation such that there exist $\e \in \R$ and a function $\valf_0:\convs \to [0,\infty)$ for which
\begin{align*}
\valf (\tau_y \u)=\valf (\u)+\lf_{\valf_0(\u) y},~\valf (\u+\lf_{y})=\tau_{\e y} \valf (\u)
\end{align*}
for every $\u \in \convs$ and $y \in \R^n$,
if and only if there are constants $c,c' \in \R$ and $\sigma \ge 0$ such that $\valf_0 \equiv \sigma$ and
\begin{align*}
\valf \u (x)= \begin{cases}
\e \sigma \u^\ast(x/\e) + c, & \e \neq 0, \\
c' \delta_x^0 + c ,&  \e = 0,
\end{cases}
\end{align*}
for every $\u \in \convs$ and $x \in \R^n$.
\end{thm}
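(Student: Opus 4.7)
My plan is to reduce Theorem \ref{thm:lt2a} to Theorem \ref{thm:lt2}. The key point is that the function $\valf_0$ in the hypothesis must be a non-negative constant $\sigma$; once that is in hand, setting $\mathcal{A}(y):=\sigma y$ gives the first hypothesis of Theorem \ref{thm:lt2} (while the second is inherited verbatim), so Theorem \ref{thm:lt2} immediately yields the stated formulas for both $\e\neq 0$ and $\e=0$.

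First, by Lemma \ref{lem:220329-1} and Lemma \ref{lem:220329-1a}, $\valf_0$ is a continuous, translation-invariant, $\sln$-invariant, real-valued valuation on $\convs$. Restricting $\valf$ to indicators, $Z(P):=\valf(\indf_P)$ is a continuous, $\sln$-covariant, translation-covariant valuation on $\MP^n$ with $Z_0(P):=\valf_0(\indf_P)\ge 0$, and Theorem \ref{thm:id1} produces constants $c_1,\dots,c_5$ with
\begin{align*}
Z(P)(x) &= c_1 h_P(x) + c_2 h_{-P}(x) + c_3 + c_4 V_n(P) + c_5\, x\cdot m(P), \\
Z_0(P) &= (c_1-c_2) + c_5 V_n(P).
\end{align*}
Next I bring in (b). Applying (a) to $u+\lf_y$ and using $\tau_{\tilde y}(u+\lf_y)=\tau_{\tilde y}u+\lf_y-\tilde y\cdot y$, while computing $\valf(\tau_{\tilde y}u+\lf_y)$ independently via (b) followed by (a), yields a two-sided identity. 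Specializing to $\tilde y\perp y$ forces $\valf_0(u+\lf_y)=\valf_0(u)$ for every $u\in\convs$ and $y\in\R^n$, and the remaining terms give the shift formula $\valf(u+c)(x)=\valf(u)(x)-\e\valf_0(u)c$, from which $\valf_0(u+c)=\valf_0(u)$ also follows.

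The main obstacle is to conclude $c_4=c_5=0$, which upgrades $\valf_0$ to a genuine constant $\sigma=c_1-c_2\ge 0$. The key observation is that neither $V_n(P)$ nor $m(P)$ extends from $\MP^n$ to a continuous translation-invariant valuation on $\convs$: the natural candidate $V_n(\dom\,\cdot)$ is epi-discontinuous, as shown by $u_k(x)=k|x|^2/2\epic\indf_{\{o\}}$ with $V_n(\dom u_k)=\infty$ but $V_n(\dom \indf_{\{o\}})=0$. Since by Lemma \ref{lem:unq} any continuous valuation on $\convs$ is determined by its values on $\{\indf_P+\lf_y+t\}$, and by the invariances just derived these values depend on $P$ only through the terms $c_1 h_P+c_2 h_{-P}+c_3+c_4 V_n(P)+c_5\,x\cdot m(P)$ (independently of $y$ and $t$), the failure of the volume- and moment-contributions to extend continuously to $\convs$ forces $c_4=c_5=0$. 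Hence $\valf_0\equiv\sigma:=c_1-c_2\ge 0$, and Theorem \ref{thm:lt2} applied with $\mathcal{A}(y)=\sigma y$ delivers $\valf u(x)=\e\sigma u^\ast(x/\e)+c$ for $\e\neq 0$ and $\valf u(x)=c'\delta_x^0+c$ for $\e=0$.
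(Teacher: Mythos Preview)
Your reduction strategy—show $\valf_0$ is a nonnegative constant $\sigma$, then invoke Theorem \ref{thm:lt2} with $\mathcal{A}(y)=\sigma y$—is sound in outline, but the step where you force $c_5=0$ (and $c_4=0$) has a genuine gap.

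The problem is the ``non-extendability'' argument. You observe that $V_n(\dom\,\cdot)$ is epi-discontinuous on $\convs$, witnessed by $u_k=k|x|^2/2\epic\indf_{\{o\}}$. But this only rules out one specific extension; it does \emph{not} show that no continuous valuation on $\convs$ can restrict to $c_5 V_n(P)$ on $\indf_P+\lf_y+t$. Your example $u_k$ is not piecewise affine, so its $\valf_0$-value is not computable from the data $\valf_0(\indf_P+\lf_y+t)$ via the valuation property, and Lemma \ref{lem:unq} gives uniqueness, not nonexistence. (Indeed, continuous valuations like $\int_{\dom u}e^{-u}\,dw$ do take the value $V_n(P)$ on $\indf_P$; you have to use the extra invariances to rule them out, and that is real work.) To make this step rigorous you must construct an explicit epi-convergent sequence of functions that \emph{can} be decomposed via the valuation property into pieces of the form $\indf_P+\lf_y+t$, compute $\valf$ (or $\valf_0$) along it, and exhibit divergence unless $c_5=0$. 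This is exactly what the paper does in Lemma \ref{lem:220519-1}: it builds piecewise-affine $v_m=u_1\wedge\cdots\wedge u_m$, evaluates $\valf v_m(re_1)$ as a polynomial in $m$, and uses finiteness of the limit to kill the coefficients.

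The paper's own route to Theorem \ref{thm:lt2a} is also different at the structural level: rather than proving $\valf_0$ constant first and then quoting Theorem \ref{thm:lt2}, it uses the nonnegativity hypothesis on $\valf_0$ together with Mussnig's classification (Lemma \ref{lem:Mus}) to obtain the representation \eqref{eq220329-1}, and then proves Theorems \ref{thm:lt2} and \ref{thm:lt2a} simultaneously via the common Theorem \ref{thm:lt2b}, in whose proof Lemma \ref{lem:220519-1} supplies the vanishing of $c_4,c_5$. If you want to avoid Mussnig's lemma, you still need an argument of the Lemma \ref{lem:220519-1} type; the soft discontinuity observation for $V_n(\dom\,\cdot)$ is not enough.
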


The following lemma follows from Mussnig \cite{MR4201535}.
\begin{lem}[Mussnig \cite{MR4201535}]\label{lem:Mus}
Let $n \ge 2$.
If $\valf_0:\convs \to [0,\infty)$ is a continuous, $\sln$ invariant and translation invariant valuation, then there are continuous functions $\eta_0,\eta_1$ such that
\begin{align}\label{eq220329-1}
\valf_0(\u)=\eta_{0}\left(\min _{w \in \mathbb{R}^{n}} u(w)\right)+\int_{\dom \u} \eta_{1}(u(w)) \dif w
\end{align}
for every $\u \in \convs$.
\end{lem}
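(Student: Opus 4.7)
The plan is to evaluate $\valf_0$ on a rich test family of simple convex functions, classify it there by reducing to a known polytope classification, and then patch everything together using the valuation and continuity hypotheses. The natural test family is the indicator-type functions $\indf_P + t$ for $P \in \MP^n$ and $t \in \R$, which sit inside $\convs$. For each fixed $t$, the map $Z_t : \MP^n \to [0,\infty)$ defined by $Z_t(P) := \valf_0(\indf_P + t)$ inherits the valuation property from $\valf_0$, since $\indf_P \vee \indf_Q = \indf_{P \cap Q}$ and $\indf_P \wedge \indf_Q = \indf_{P \cup Q}$ whenever $P \cup Q$ is convex, and inherits continuity, $\sln$-invariance, and translation-invariance as well (Hausdorff convergence $P_i \to P$ gives $\indf_{P_i}+t \epic \indf_P + t$). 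Lemma \ref{lem:MR} then forces
\[Z_t(P) = \eta_0(t)\, V_0(P) + \eta_1(t)\, V_n(P)\]
for some $\eta_0(t), \eta_1(t) \in \R$, which are continuous in $t$ because of the continuity of $\valf_0$ under $t_i \to t$. Since $\min(\indf_P+t)=t$ and $\int_{\dom(\indf_P+t)} \eta_1(\indf_P(w)+t)\dif w = \eta_1(t) V_n(P)$, the asserted formula holds on this class.

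To promote the formula from the class $\{\indf_P + t\}$ to all of $\convs$, I would combine Lemma \ref{lem:unq} with a fine-partition argument. By Lemma \ref{lem:unq}, $\valf_0$ is determined by its values on $\indf_P + \lf_y + s$. For such a function the target formula reads $\eta_0(s+\min_P \lf_y) + \int_P \eta_1(\lf_y(w)+s)\dif w$. I would verify this by decomposing $P$ into small polytopes $P_1, \dots, P_N$ on each of which $\lf_y$ varies by at most $\varepsilon$; iterated application of the valuation identity to the indicator pieces $\indf_{P_i} + t_i$ (with $t_i$ a representative value of $\lf_y + s$ on $P_i$), together with the classification from the first step, expresses $\valf_0(\indf_P + \lf_y + s)$ as a telescoping sum. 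Passing to finer partitions, the $\eta_1 V_n$ part becomes the Riemann integral, while the $\eta_0 V_0$ pieces from interior subdivision faces ought to cancel through inclusion-exclusion, leaving only the contribution of the cell that concentrates near the minimizer of $\lf_y$ on $P$ — which in the limit reproduces $\eta_0(s + \min_P \lf_y)$. Density and Lemma \ref{lem:unq} then finish the extension to all $\u \in \convs$.

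The main obstacle is the cancellation argument for the $\eta_0$ Euler-characteristic contributions during the refinement: along interior walls of the polytopal subdivision of $P$, the inclusion-exclusion terms have to combine through the valuation identity so that only the single minimizing cell persists in the limit, and this requires careful book-keeping over the fan structure of the decomposition. A secondary technical issue is justifying that the integral $\int_{\dom \u}\eta_1(\u(w))\dif w$ is well-defined and continuous on all of $\convs$; this is where the super-coercivity hypothesis is essential, both to guarantee integrability (via super-linear growth controlling $\eta_1 \circ \u$ at infinity) and to ensure that epi-convergence of a refining sequence of approximants transfers to convergence of the integral, allowing the final limit passage.
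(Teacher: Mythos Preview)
The paper does not give its own proof of this lemma; it is quoted from Mussnig \cite{MR4201535}, with the remark that Mussnig in fact characterizes precisely which $\eta_0,\eta_1$ arise. So there is no in-paper argument to compare against.

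Your first step --- applying Lemma~\ref{lem:MR} to the family $P\mapsto\valf_0(\indf_P+t)$ to extract $\eta_0(t)$ and $\eta_1(t)$ --- is sound and is indeed how Mussnig begins. The extension step, however, has a real gap. The pieces $\indf_{P_i}+t_i$ with \emph{different} heights $t_i$ cannot be combined through the valuation identity: for adjacent cells $P_1,P_2$ with $t_1\neq t_2$ the pointwise minimum $(\indf_{P_1}+t_1)\wedge(\indf_{P_2}+t_2)$ is a non-convex step function and hence lies outside $\convs$, so the valuation relation is never triggered. The valuation property only lets you split $\indf_P+\lf_y+s$ into pieces of the form $\indf_{P_i}+\lf_y+s$ that still carry the full linear term, which does not reduce to the already-classified constant case. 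This is why the $\eta_0$ cancellation you flag cannot even be set up as inclusion--exclusion in the way you sketch.

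The issue you call ``secondary'' is in fact primary. To invoke Lemma~\ref{lem:unq} you need the candidate $\u\mapsto\eta_0(\min\u)+\int_{\dom\u}\eta_1(\u(w))\dif w$ to itself be a continuous valuation on $\convs$; without that, agreement on the determining class $\{\indf_P+\lf_y+t\}$ tells you nothing about general $\u$. Establishing this --- in particular, identifying the growth conditions on $\eta_1$ that make the integral finite and epi-continuous --- is precisely the substantive content of Mussnig's theorem, and it is not a consequence of super-coercivity alone. Mussnig's actual argument proceeds via a reduction involving epi-homogeneous decompositions and a careful analysis of these growth conditions, rather than a direct Riemann-sum partition of the domain.
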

Mussnig \cite{MR4201535}[Theorem 1.3] gave an ``if and only if" statement with some restrictions on $\eta_0,\eta_1$.
We do not describe those restrictions since they are not needed herein.

By Lemmas \ref{lem:220512-1}-\ref{lem:220329-1a} and Lemma \ref{lem:Mus}, we can prove Theorem \ref{thm:lt2} and Theorem \ref{thm:lt2a} simultaneously by
proving the following theorem.

\begin{thm}\label{thm:lt2b}
Let $n \ge 3$. A transform $\valf :\convs \rightarrow \FF{n}$ is a continuous and $\sln$ contravariant valuation such that there exist $\e \in \R$ and continuous functions $\eta_0,\eta_1:\R \to \R$ for which \eqref{eq220329-1} holds and
\begin{align}
&\valf (\tau_y \u)=\valf (\u)+\lf_{\valf_0(\u) y}, \label{def:tran}\\
&\valf (\u+\lf_{y})=\tau_{\e y} \valf (\u), \label{def:dualtran}
\end{align}
for every $\u \in \convs$ and $y \in \R^n$,
if and only if there are constants $c,c',\sigma \in \R$ such that $\valf_0 \equiv \sigma$ and
\begin{align*}
\valf \u (x)= \begin{cases}
\e \sigma \u^\ast(x/\e) + c, & \e \neq 0, \\
c' \delta_x^0 + c ,&  \e = 0,
\end{cases}
\end{align*}
for every $\u \in \convs$ and $x \in \R^n$.

Moreover, $\valf_0 \ge 0$ if and only if $\sigma \ge 0$.
\end{thm}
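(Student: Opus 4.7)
The plan is to pin down $\valf$ on the family $\{\indf_P + \lf_y + t : P \in \MP^n,\, y \in \R^n,\, t \in \R\}$ and then invoke Lemma \ref{lem:unq}. For each fixed $t$ I would first set $Z^{(t)}P := \valf(\indf_P + t)$. Since $\tau_z(\indf_P + t) = \indf_{P+z} + t$ and $(\indf_P + t)\circ\phi^{-1} = \indf_{\phi P} + t$, this $Z^{(t)} : \MP^n \to \FF{n}$ is a continuous, $\sln$-covariant, translation-covariant valuation with translation coefficient $Z_0^{(t)}(P) = \valf_0(\indf_P + t) = \eta_0(t) + \eta_1(t) V_n(P)$ (using the assumed Mussnig form). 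Theorem \ref{thm:id1} then gives
\[
Z^{(t)}P(x) = c_1(t)h_P(x) + c_2(t)h_{-P}(x) + c_3(t)V_0(P) + c_4(t)V_n(P) + c_5(t)\,x\cdot m(P),
\]
with $c_1(t) - c_2(t) = \eta_0(t)$ and $c_5(t) = \eta_1(t)$. Applying \eqref{def:dualtran} to $\indf_P + t$ then yields
\[
W(P, y, t, x) := \valf(\indf_P + \lf_y + t)(x) = Z^{(t)}P(x - \e y).
\]

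Next I would apply \eqref{def:tran} to $u = \indf_P + \lf_y + t$, using $\tau_z u = \indf_{P+z} + \lf_y + (t - y\cdot z)$, and compare both sides for various choices of $P, y, z$. Specializing to $P = \{o\}$ forces $\eta_0 \equiv \sigma_0$ constant and $c_3(t) = -\e\sigma_0 t + c$ (in the nontrivial case $\e \neq 0$); specializing next to centered cubes and to polytopes with nonzero moment forces $c_1, c_2$ to be constants with $c_1 - c_2 = \sigma_0$, $\eta_1 \equiv \gamma$ constant, $c_5(t) = \gamma$, and $c_4(t) = -\e\gamma t + c_4(0)$. Because $\valf_0$ must be $\R$-valued on all of $\convs$ and Mussnig's integral $\int_{\dom u}\gamma\dif w = \gamma V_n(\dom u)$ is infinite on any $u \in \convs$ with $\dom u = \R^n$ (e.g.\ $u(x) = |x|^2/2$), this forces $\gamma = 0$. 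Hence $\valf_0 \equiv \sigma_0 =: \sigma$, and the ``moreover'' statement on signs follows.

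Third, I would introduce the candidate $\tilde\valf u(x) := \e\sigma u^\ast(x/\e) + c$ (for $\e \neq 0$, with $c = c_3(0)$), verify from the standard properties of the Legendre transform that it is a continuous $\sln$-contravariant valuation with $\tilde\valf_0 = \sigma$ satisfying \eqref{def:tran} and \eqref{def:dualtran} with the same $\e$, and study the difference $\valf' := \valf - \tilde\valf$. This is a continuous $\sln$-contravariant valuation that is translation invariant (both summands carry the same $\sigma$), constant-shift invariant (both $\valf$ and $\tilde\valf$ satisfy $F(u + s) = F(u) - \e\sigma s$, read off the dense subset from $c_3(t) = -\e\sigma t + c$ and then propagated by Lemma \ref{lem:unq}), and still satisfies $\valf'(u + \lf_y) = \tau_{\e y}\valf'(u)$. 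Re-applying Theorem \ref{thm:id1} to $P \mapsto \valf'(\indf_P + t)$, now with vanishing translation coefficient, forces $c_1 = c_2$ and $c_5 = 0$, and combining with the above invariances gives
\[
\valf'(\indf_P + \lf_y + t)(x) = \alpha\bigl(h_P(x - \e y) + h_{-P}(x - \e y)\bigr) + \beta + \gamma' V_n(P)
\]
for constants $\alpha, \beta, \gamma'$.

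The hard part is to force $\alpha = \gamma' = 0$. Here the key observation is that for $u(x) = |x|^2/2$ one has $u + \lf_y = \tau_{-y}u - |y|^2/2$; combining translation invariance and constant-shift invariance of $\valf'$ with \eqref{def:dualtran} gives $\valf'(u)(x - \e y) = \valf'(u)(x)$ for every $y$, so $\valf'(|x|^2/2)$ is constant in $x$ and in particular finite. By Lemma \ref{lem:unq} this value is determined by the dense-subset formula above, and the only way a continuous $\R$-valued valuation extending that formula to functions with $\dom u = \R^n$ can exist is $\alpha = \gamma' = 0$, since neither $u \mapsto h_{\dom u}$ nor $u \mapsto V_n(\dom u)$ remains finite at such $u$. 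Finally, testing on $u_i = i|x|^2/2 \epic \indf_{\{o\}}$ forces $\beta = 0$, so $\valf' \equiv 0$ and $\valf = \tilde\valf$ on $\convs$ by Lemma \ref{lem:unq}. The case $\e = 0$ is analogous: then \eqref{def:dualtran} directly says $\valf(u + \lf_y) = \valf u$ for every $y$, which together with the previous steps forces $\valf u(x)$ to be constant on $\R^n \setminus \{o\}$, leaving only $\valf u(o)$ free and yielding the $c'\delta_x^0 + c$ form.
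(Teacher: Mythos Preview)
Your overall architecture matches the paper's: apply Theorem~\ref{thm:id1} to $P\mapsto\valf(\indf_P+t)$ for each $t$, exploit \eqref{def:tran}--\eqref{def:dualtran} to pin down the $t$-dependence of the coefficients, and finish with Lemma~\ref{lem:unq}. Your shortcut $\eta_1\equiv\gamma\Rightarrow\gamma=0$ via $\valf_0(|x|^2/2)<\infty$ is valid and in fact simpler than what the paper does (it kills $c_5$ only later, via Lemma~\ref{lem:220519-1}).

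The genuine gap is in your ``hard part''. You obtain
\[
\valf'(\indf_P+\lf_y+t)(x)=\alpha\bigl(h_P(x-\e y)+h_{-P}(x-\e y)\bigr)+\gamma' V_n(P)
\]
and then assert that $\alpha=\gamma'=0$ because ``neither $u\mapsto h_{\dom u}$ nor $u\mapsto V_n(\dom u)$ remains finite'' on full-domain $u$. But $\valf'(u)$ is \emph{not} given by $\alpha(h_{\dom u}+h_{-\dom u})+\gamma' V_n(\dom u)$ for general $u$; that formula only holds on the dense family, and Lemma~\ref{lem:unq} gives uniqueness, not an explicit extension. Your observation that $\valf'(|\cdot|^2/2)$ is constant is correct but does not by itself bound or compute that constant in terms of $\alpha,\gamma'$. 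What is actually needed is a concrete sequence $v_m\in\convs$ epi-converging to some $v\in\convs$ for which $\valf' v_m$ can be computed from the dense-family formula via the valuation property (inclusion--exclusion over a $\wedge$-decomposition) and shown to diverge unless $\alpha=\gamma'=0$. This is precisely the content of the paper's Lemma~\ref{lem:220519-1}: it builds piecewise-affine $v_m=u_1\wedge\cdots\wedge u_m$ with $u_i=\indf_{[i-1,i]\times[0,1]^{n-1}}+\lf_{ie_1}-\tfrac{i^2-i}{2}$, computes $\valf v_m(re_1)$ explicitly as a polynomial in $m$, and reads off the vanishing of the coefficients from finiteness of the limit. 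Your sketch does not supply any such construction, and without it the step fails. The $\e=0$ case has the same issue: ``together with the previous steps forces $\valf u(x)$ to be constant'' is not justified, since for $\e=0$ your $|\cdot|^2/2$ trick no longer produces a shift in $x$, and you still need to kill $c_1,c_2,c_4$ (and hence also $\sigma$) by a limiting argument of the Lemma~\ref{lem:220519-1} type.
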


Before proving Theorem \ref{thm:lt2b}, we prove a Lemma of restriction which also holds for the planar case.
\begin{lem}\label{lem:220519-1}
Let $n \ge 2$ and $\valf :\convs \rightarrow \FF{n}$ be a continuous valuation satisfying \eqref{def:dualtran}
with $\e \in \R$.
Further assume that there are constants $c_1,c_2,c_3,c_4,c_5,\sigma \in \R$ with $c_1-c_2=\sigma$ such that
\begin{align}\label{eq220519-1}
\valf (\indf_P+t)(x)=c_1 h_{P}(x) +c_2 h_{-P}(x) - \e \sigma t + c_3 + (-c_5 \e t +c_4) V_n(P)+ c_5 x \cdot m(P)
\end{align}
for every $P \in \MP^n$, $x \in \ro{n}$, and $t \in \R$.
We have
\begin{align*}
\begin{cases}
\text{if~} \e>0, \text{~then~} c_1=\sigma, ~c_2 = c_4=c_5=0; \\
\text{if~} \e<0, \text{~then~} c_2=-\sigma, ~ c_1=c_4=c_5=0;\\
\text{if~} \e=0, \text{~then~} c_1=c_2=c_4=c_5=\sigma=0.
\end{cases}
\end{align*}
\end{lem}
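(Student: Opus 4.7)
The strategy is to combine the dual-translation identity \eqref{def:dualtran} with the given form \eqref{eq220519-1} to obtain a completely explicit expression for $\valf$ on the uniqueness class of Lemma \ref{lem:unq}, and then extract the desired restrictions by imposing the valuation property on suitably chosen pairs.

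First, applying \eqref{def:dualtran} with $\u=\indf_P+t$ yields
\[
\valf(\indf_P+\lf_y+t)(x)=\valf(\indf_P+t)(x-\e y),
\]
so that, together with \eqref{eq220519-1}, the value of $\valf$ on every generator $\indf_P+\lf_y+t$ is known. By Lemma \ref{lem:unq} this determines $\valf$ on all of $\convs$, and the only freedom left lies in the constants $c_1,\dots,c_5,\sigma$. Any further constraint must come from forcing the extended $\valf$ to remain a valuation (respectively, to obey \eqref{def:dualtran}) on convex functions outside the simple class $\indf_P+t$. I would test this by expanding
\[
\valf(\u_1)+\valf(\u_2)=\valf(\u_1\vee\u_2)+\valf(\u_1\wedge\u_2)
\]
on families of pairs $\u_i=\indf_{P_i}+\lf_{y_i}+t_i$ where $P_1,P_2$ share a common facet and the parameters $(y_i,t_i)$ are tuned so that $\u_1\wedge\u_2$ is a piecewise-affine convex function across the facet, together with a supplementary family of full-dimensional polytopes to isolate the $V_n$ and $m(P)$ contributions. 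Equating the resulting expressions as coefficients of $h_{P}(x-\e y)$, $h_{-P}(x-\e y)$, $V_n(P)$, and $x\cdot m(P)$ and varying the parameters decouples the system and isolates the relations on $c_1,\dots,c_5,\sigma$.

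The case split on the sign of $\e$ arises from the positive (not linear) homogeneity of $h_P$: the shift $x\mapsto x-\e y$ interacts asymmetrically with $h_P$ versus $h_{-P}$ according to $\sgn(\e)$. For $\e>0$, consistency forces $c_2=c_4=c_5=0$ and $c_1=\sigma$; for $\e<0$ the argument is symmetric and yields $c_1=c_4=c_5=0$ and $c_2=-\sigma$. When $\e=0$, the hypothesis \eqref{def:dualtran} collapses to $\valf(\u+\lf_y)=\valf(\u)$ for every $y$; imposing this on a richer class of $\u$ (accessed by continuity from within the uniqueness class) forces invariance of the formula \eqref{eq220519-1} under arbitrary $x$-shifts, which annihilates every nonconstant term and gives $\sigma=c_1=c_2=c_4=c_5=0$. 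The principal obstacle is arranging test pairs whose $\wedge$ genuinely lies in $\convs$ and simultaneously admits a computable $\valf$-value; when no direct pair achieves this, the value of $\valf$ on the infimum must be reached through an auxiliary lattice decomposition or by approximation by members of the $\indf_P+\lf_y+t$ class, using the continuity of $\valf$ to pass to the limit.
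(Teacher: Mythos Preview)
Your plan has a genuine gap. Because $\valf$ is \emph{assumed} to be a continuous valuation satisfying \eqref{def:dualtran}, the identity
\[
\valf(\u_1)+\valf(\u_2)=\valf(\u_1\vee\u_2)+\valf(\u_1\wedge\u_2)
\]
holds automatically for every admissible pair; testing it on generators $\indf_{P_i}+\lf_{y_i}+t_i$ therefore cannot constrain the constants. Three of the four terms are computable from \eqref{eq220519-1} and \eqref{def:dualtran}, and the identity merely \emph{defines} the fourth term $\valf(\u_1\wedge\u_2)$. Your proposed workarounds do not break this circularity: any ``auxiliary lattice decomposition'' just reapplies the valuation property and returns the same value, and a piecewise-affine function with two distinct slopes cannot be epi-approximated by single generators $\indf_P+\lf_y+t$, so approximation from within that class is unavailable. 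Likewise, checking \eqref{def:dualtran} on $\u_1\wedge\u_2$ is automatically consistent, since $(\u_1\wedge\u_2)+\lf_y=(\u_1+\lf_y)\wedge(\u_2+\lf_y)$ and each piece already satisfies \eqref{def:dualtran}.

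The constraints actually come from continuity applied to an \emph{infinite} construction. The paper takes $u_i=\indf_{[i-1,i]\times[0,1]^{n-1}}+\lf_{ie_1}-\tfrac{i^2-i}{2}$ and $v_m=u_1\wedge\cdots\wedge u_m$, evaluates $\valf v_m(re_1)$ via inclusion--exclusion as an explicit polynomial in $m$, and observes that $v_m$ epi-converges to some $v\in\convs$. Finiteness of $\lim_{m\to\infty}\valf v_m(re_1)=\valf v(re_1)$ forces the coefficients of $m^3$, $m^2$, and $m$ to vanish; varying $r$ then pins down $c_1,c_2,c_4,c_5$. The sign of $\e$ enters through the sign of $r-i\e$, which decides whether $h_{[i-1,i]\times[0,1]^{n-1}}(\cdot)$ or $h_{-[i-1,i]\times[0,1]^{n-1}}(\cdot)$ is active at the shifted point. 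Your reasoning for $\e=0$ is also off: $\valf(\u+\lf_y)=\valf(\u)$ says that $\valf\u$ is unchanged when a linear function is added to $\u$; it does \emph{not} say $\valf\u(x)$ is independent of $x$, so it does not ``annihilate every nonconstant term'' in \eqref{eq220519-1}. In the paper, the case $\e=0$ is obtained by running both computations (with $r<0$ and $r>0$) and intersecting the resulting constraints.
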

\begin{proof}
Let $i,m >0$ be integers, $u_i=\indf_{[i-1,i] \times [0,1]^{n-1}}+\lf_{ie_1} - \frac{i^2-i}{2}$ and $v_m=u_1 \wedge \dots \wedge u_m$.
We have $\u_i \vee \u_{i+1}=\indf_{i\times [0,1]^{n-1}} + \frac{i^2+i}{2}$.
Hence by \eqref{def:dualtran}, \eqref{eq220519-1}, and the valuation property, we get
\begin{align*}
&\valf \v_m (re_1) \notag\\
&= \sum_{i=1}^m \valf \u_i(re_1)- \sum_{i=1}^{m-1} \valf (\u_i \vee u_{i+1})(re_1) \notag \\
&=\sum_{i=1}^m \valf \ab{\indf_{[i-1,i]\times [0,1]^{n-1}} - \frac{i^2-i}{2}}(re_1- i\e e_1)-\sum_{i=1}^{m-1} \valf \left(\indf_{i\times [0,1]^{n-1}}+ \frac{i^2+i}{2}\right)(re_1),
\end{align*}
where
\begin{align*}
&\valf \ab{\indf_{[i-1,i]\times [0,1]^{n-1}} - \frac{i^2-i}{2}}(re_1-i \e e_1) \\
&=c_1h_{[i-1,i]\times [0,1]^{n-1}}(re_1-i \e e_1)+c_2h_{-[i-1,i]\times [0,1]^{n-1}}(re_1-i \e e_1)+ \frac{i^2-i}{2}\e \sigma  +c_3 \\
& \qquad +\ab{c_5\frac{i^2-i}{2}\e + c_4}+\frac{(2i-1)}{2}c_5 r
\end{align*}
and
\begin{align*}
&\valf \left(\indf_{i\times [0,1]^{n-1}}+ \frac{i^2+i}{2}\right)(re_1)
=i \sigma r-\frac{i^2+i}{2}\e \sigma +c_3
\end{align*}
for any $r,r-i\e \neq 0$.

Assume that $r - i \e  < 0$ for all $i \ge 1$.
We further have
\begin{align*}
&\valf v_m (re_1) \notag\\
&=\sum_{i=1}^m \left(i(r-i\e)\sigma  -c_1(r-i \e) + \frac{i^2-i}{2}\e \sigma
        +c_3+ c_4 + c_5\frac{i^2-i}{2}\e + \frac{2i-1}{2}c_5r \right) \notag\\
&   \qquad \qquad - \sum_{i=1}^{m-1} \left(i\sigma r-\frac{i^2+i}{2}\e \sigma +c_3 \right) \notag \\
&=\frac{m(m+1)(m-1)}{6} c_5 \e + \frac{m^2+m}{2}\ab{-\e \sigma + c_1 \e +c_5r} + \frac{m}{2}(-2c_1r+ 2c_4 - c_5r+2\sigma r)+c_3.
\end{align*}
Define $v(x):=u_i(x)$ for $x \in [i-1,i] \times [0,1]^{n-1}$ and $v(x)= \infty$ for other $x \in \R^n$, where $i=1,2,\dots$.
Note that $v:=\lim_{m \to \infty}^{\epi} v_m \in \convs$ from the definition of epi-convergence.
Thus $\lim_{m \to \infty} \valf \v_m(re_1) = \valf \v(re_1) \neq \pm \infty$.
For $\e \ge 0$, together with $c_1-c_2 = \sigma$, it turns out
\begin{align}\label{eq:0601-2a}
c_1=\sigma, ~c_2 = c_4=c_5=0,
\end{align}
by setting arbitrary $r<0$.

Similarly, assume $r- i \e  > 0$ for all $i \ge 1$.
We have
\begin{align*}
&\valf v_m (re_1) \notag\\
&=\sum_{i=1}^m \left(i(r-i \e)\sigma + c_2(r-i \e)+ \frac{i^2-i}{2}\e \sigma
        +c_3 + c_4 + c_5\frac{i^2-i}{2}\e + \frac{2i-1}{2}c_5r\right) \\
&   \qquad \qquad - \sum_{i=1}^{m-1} \left(i\sigma r-\frac{i^2+i}{2}\e \sigma +c_3 \right) \notag \\
&=\frac{m(m+1)(m-1)}{6} c_5 \e + \frac{m^2+m}{2}\ab{-\e \sigma - c_2 \e +c_5r}+ \frac{m}{2}(2c_2r+ 2c_4 - c_5r+2\sigma r) +c_3.
\end{align*}
For $\e \leq 0$, together with $c_1-c_2 = \sigma$, it turns out
\begin{align}\label{eq:0601-3a}
c_2=-\sigma, ~ c_1=c_4=c_5=0,
\end{align}
by setting arbitrary $r>0$.

For $\e=0$, both \eqref{eq:0601-2a} and \eqref{eq:0601-3a} holds.
Together with $c_1-c_2=\sigma$, we have $\sigma=0$.
\end{proof}

\begin{proof}[Proof of Theorem \ref{thm:lt2b}]
The ``if" part is trivial (with $\valf_0 \equiv \sigma$ and that $\e=0$ implies $\sigma=0$).

We prove the ``only if" part in two steps.
\myvskip
\textbf{Step i.}
\emph{Assertion. There are constants $c_1,c_2,c_3,c_4,c_5,\sigma \in \R$ with $c_1-c_2=\sigma$ such that
\begin{align*}
\valf (\indf_P+t)(x)=c_1 h_{P}(x) +c_2 h_{-P}(x) - \e \sigma t + c_3 + (-c_5 \e t +c_4) V_n(P)+ c_5 x \cdot m(P)
\end{align*}
for every $P \in \MP^n$, $x \in \ro{n}$, and $t \in \R$.}
\emph{Moreover, if $\eta_0,\eta_1 \ge 0$, then $c_5,\sigma \ge 0$.}

\myvskip
Indeed, set $u=\indf_P+\lf_y +t$ for some $P \in \MP^n$, $y \in \R^n$ and $t\in \R$.
It is easy to see that
$$\tau_{\tilde{y}} u = \indf_{P+\tilde{y}} + \lf_y - y \cdot \tilde{y} +t$$ for any $\tilde{y} \in \R^n$.
The assumption that $\valf$ satisfies \eqref{def:dualtran} and \eqref{def:tran} now implies
\begin{align}\label{eq:le1}
\valf (\indf_{P+\tilde{y}} - y \cdot \tilde{y} +t)\left(x-\e y\right) &= \valf (\tau_{\tilde{y}} \u)(x) \notag \\
&= \valf (\u)(x) + \valf_0(\indf_P+\lf_y +t) \tilde{y} \cdot x \notag \\
&=\valf (\indf_P +t) (x -\e y) + \valf_0(\indf_P+\lf_y +t) \tilde{y} \cdot x.
\end{align}
In particular, for $y=o$, we have
\begin{align*}
\valf (\indf_{P+\tilde{y}} +t)(x) =\valf (\indf_P +t) (x) + \valf_0(\indf_P +t) \tilde{y} \cdot x.
\end{align*}
For fixed $t \in \R$, it is easy to see that the map $P \mapsto \valf (\indf_P +t)$ satisfies the assumptions in Theorem \ref{thm:id1}.
Therefore, we have functions $\zeta_1,\dots \zeta_5: \R \to \R$ such that
\begin{align}\label{eq220329-2}
\valf (\indf_P+t)(x)=\zeta_1(t) h_{P}(x) +\zeta_2(t) h_{-P}(x) + \zeta_3(t)V_0(P) +\zeta_4(t) V_n(P)+ \zeta_5(t) x \cdot m(P)
\end{align}
and
\begin{align*}
\valf_0(\indf_P + t) = \zeta_1(t) - \zeta_2(t) + \zeta_5(t) V_n(P).
\end{align*}
for every $P \in \MP^n$ and $x \neq o$.
Together with \eqref{eq220329-1}, we also have
\begin{align}\label{eq220329-4}
\valf_0(\u)=(\zeta_1 - \zeta_2)\left(\min _{w \in \mathbb{R}^{n}} u(w)\right)+\int_{\dom \u} \zeta_5(u(w)) \mathrm{d} w
\end{align}
for every $\u \in \convs$.

Combining \eqref{eq:le1} with \eqref{eq220329-2} and \eqref{eq220329-4}, we get
\begin{align}\label{220319-0}
&\zeta_1(- y \cdot \tilde{y} +t) h_{P+\tilde{y}}\left(x-\e y\right) + \zeta_2(- y \cdot \tilde{y} +t) h_{-P-\tilde{y}}\left(x-\e y\right) + \zeta_3(- y \cdot \tilde{y} +t)  \notag \\
&\qquad \qquad  + \zeta_4(- y \cdot \tilde{y} +t) V_n(P) + \zeta_5(- y \cdot \tilde{y} + t) \ab{x-\e y} \cdot \ab{m(P)+V_n(P)\tilde{y}} \notag \\
&=\zeta_1(t) h_{P}(x -\e y) + \zeta_2(t) h_{-P}(x -\e y) + \zeta_3(t) + \zeta_4(t) V_n(P) + \zeta_5(t) \ab{x-\e y} \cdot m(P) \notag\\
&\qquad \qquad +\ab{(\zeta_1 - \zeta_2)\left(\min_{w \in P} w \cdot y + t\right)+\int_{P} \zeta_5(w \cdot y +t ) \dif w} \tilde{y} \cdot x
\end{align}
for any $x-\e y \neq o$.
Let $P=\{w\}$.
We have
\begin{align*}
&\big(\zeta_1(- y \cdot \tilde{y} +t ) - \zeta_2(- y \cdot \tilde{y} +t) \big) (w +\tilde{y}) \cdot \ab{x-\e y} + \zeta_3(- y \cdot \tilde{y} +t)  \notag \\
&\qquad =\big( \zeta_1(t)- \zeta_2(t) \big) w \cdot \ab{x-\e y} + \zeta_3(t) + \big(\zeta_1\left(w \cdot y + t\right) - \zeta_2\left(w \cdot y + t\right)\big) \tilde{y} \cdot x.
\end{align*}
Setting $w=o$ or $w=-\tilde y$, respectively,
and choosing $x \bot \tilde{y} $, we further have
\begin{align*}
&-\big(\zeta_1(- y \cdot \tilde{y} +t) - \zeta_2(- y \cdot \tilde{y} +t)\big) \e \tilde{y} \cdot y + \zeta_3(- y \cdot \tilde{y} +t) = \zeta_3(t),
\end{align*}
and
\begin{align*}
\zeta_3(- y \cdot \tilde{y} +t)
=\big(\zeta_1(t) - \zeta_2(t)\big) \e \tilde{y} \cdot y
+ \zeta_3(t),
\end{align*}
respectively.
Thus
\begin{align}\label{220319-3}
\zeta_1 - \zeta_2 \equiv \sigma,
\end{align}
and
\begin{align}\label{220319-4}
\zeta_3(t)=- \e \sigma t +c_3,~\forall t \in \R
\end{align}
with some constants $c_3,\sigma \in \R$.

For arbitrary $s\ge -r$, we can find $P \in \MP^n$ of arbitrary dimension such that $s=h_{P}(x -\e y)$ and $r=h_{-P}(x -\e y)$.
Thus \eqref{220319-0} together with \eqref{220319-3} and \eqref{220319-4} implies
\begin{align}\label{220319-5}
&\zeta_1(- y \cdot \tilde{y} +t) s + \zeta_2(- y \cdot \tilde{y} +t) r  + \zeta_4(- y \cdot \tilde{y} +t) V_n(P) \notag \\
&\qquad \qquad \qquad \qquad  + \zeta_5(- y \cdot \tilde{y} + t) \ab{x-\e y} \cdot \ab{m(P)+V_n(P)\tilde{y}} \notag \\
&=\zeta_1(t) s + \zeta_2(t) r + \zeta_4(t) V_n(P) + \zeta_5(t) \ab{x-\e y} \cdot m(P) +\ab{\int_{P} \zeta_5(w \cdot y +t ) \dif w} \tilde{y} \cdot x.
\end{align}

Choosing $\dim P<n$ and $s=0$ or $r=0$ in \eqref{220319-5}, respectively, we get
\begin{align*}
\zeta_1 \equiv c_1, ~\zeta_2 \equiv c_2,
\end{align*}
with some constants $c_1,c_2 \in \R$ and then by \eqref{220319-3},
\begin{align*}
c_1-c_2=\simga.
\end{align*}
Further choosing $\dim P=n$, $y \cdot \tilde{y}=0$ but $x \cdot \tilde{y} \neq 0$ in \eqref{220319-5}, we get
\begin{align*}
\zeta_5(t) V_n(P)
= \int_{P} \zeta_5(w \cdot y +t ) \dif w
\end{align*}
for any $P \in \MP^n$.
Thus (by approximating continuous functions with stair functions),
\begin{align*}
\int_{\R^n} \big(\zeta_5(w \cdot y +t ) - \zeta_5(t) \big) g(w) \dif w =0
\end{align*}
for any $g \in C_c(\R^n)$.
Therefore,
\begin{align*}
\zeta_5 \equiv c_5
\end{align*}
with some constant $c_5 \in \R$.
Back to \eqref{220319-5}, we get
\begin{align*}
&\zeta_4(- y \cdot \tilde{y} +t)  - c_5 \e y \cdot \tilde{y} =\zeta_4(t),
\end{align*}
which implies
\begin{align*}
\zeta_4(t)=-c_5 \e t +c_4,~\forall t \in \R,
\end{align*}
with some constant $c_4 \in \R$.

Moreover, if $\eta_0,\eta_1 \ge 0$, then $c_5, \sigma \ge 0$ follows from \eqref{eq220329-4}.

\myvskip
\textbf{Step ii.} Denote $c:=c_3$. By previous step and Lemma \ref{lem:220519-1}, we have
\begin{align}\label{eq220519-2}
\valf (\indf_P+t)(x)=
\begin{cases}
\sigma h_{P}(x) - \e \sigma t + c, &\e>0, \\
-\sigma h_{-P}(x) -\e \sigma t + c, &\e<0, \\
c, &\e=0,
\end{cases}
\end{align}
for every $P \in \MP^n$, $x \in \ro{n}$, and $t \in \R$.

For $\e \neq 0$, \eqref{def:dualtran} implies $\valf \u (y) = \tau_{(x-y)}\valf \u (x)=\valf (\u +\lf_{(x-y)/\e})(x)$.
Therefore, $\valf \u \in \CF{n}$ follows from the continuity of $\valf$.
Together with \eqref{def:dualtran} and \eqref{eq220519-2}, we get
\begin{align*}
\valf (\indf_P+\lf_y+t)(x) =
    \begin{cases}
    \sigma h_P(x-\e y) -\e \sigma t + c , &\e>0, \\
    -\sigma h_{-P}(x- \e y) -\e \sigma t +c, & \e<0,
    \end{cases}
\end{align*}
for every $P \in \MP^n$ and $x \in \R^n$.
Observe that
\begin{align*}
\e \sigma (\indf_P+\lf_y+t)^\ast (x/\e) = \e \sigma (h_P(x/\e-y)-t)=\sgn(\e) \sigma h_{\sgn(\e) P}(x-\e y) - \e \sigma t.
\end{align*}
By Lemma \ref{lem:unq}, we get
\begin{align*}
\valf \u(x) = \e \sigma u^\ast (x/\e) + c
\end{align*}
for every $\u \in \convs$ and $x \in \R^n$.

For $\e =0$, \eqref{def:dualtran} and \eqref{eq220519-2} show that
\begin{align*}
\valf (\indf_P+\lf_y+t)(x) = c
\end{align*}
for every $P \in \MP^n$ and $x \in \ro{n}$.
By Lemma \ref{lem:unq}, we get $\valf (\u)(x)=c$ for every $x \in \ro{n}$.
Setting $\valf'  (\u)(x):=\valf (\u)(o)$ for any $x \in \ro{n}$ and $\u \in \convs$,
it is easy to see that $\valf'$ is a continuous and $\sln$ contravariant valuation satisfying
\eqref{def:tran} and \eqref{def:dualtran} with $\e=\valf_0=0$.
Thus, the previous result shows $\valf' (\u)(x)\equiv c'+c$ for some constant $c' \in \R$, which completes the proof.
\end{proof}

\subsection{Generalizations of Theorem \ref{mthm:log1} and \ref{mthm:log}}
We prove the following generalizations of both Theorem \ref{mthm:log1} and \ref{mthm:log} by considering their equivalent versions on convex functions.
In fact, let $\valf':\lcsc \to \FF{n}$ be a valuation that satisfies the assumptions in Theorem \ref{mthm:log1} and \ref{mthm:log}.
Define $\valf :\convs \rightarrow \FF{n}$ by $\valf \u = \valf' (\expfsmall{-\u})$ for $\u \in \convs$.
It is easy to see that $\valf$ satisfies the assumptions in the following Theorem \ref{thm:log} with $\e=-\sigma=1$ or $\e=\sigma=1$, respectively.
From $\valf$ to $\valf'$ is similar.

\begin{thm}\label{thm:log}
Let $n \ge 3$. A transform $\valf :\convs \rightarrow \FF{n}$ is a continuous and $\sln$ contravariant valuation such that there exist $\e \in \R$ and a function $\mathcal{A}:\R^n \to \R^n$ with $\mathcal{A} \not\equiv o$ for which
\begin{align}
&\valf (\tau_y \u)=\expf{\lf_{\mathcal{A} (y)}} \valf (\u), \label{def:llt2}\\
&\valf (\u+\lf_{y})=\tau_{\e y} \valf (\u), \label{def:dualtran-log}
\end{align}
for every $\u \in \convs$ and $y \in \R^n$,
if and only if there are constants $c_1,c_2 \in \R$ and $\sigma \neq 0$ such that $\mathcal{A}y=\sigma y$ for every $y \in \R^n$ and
\begin{align*}
\valf (\u)(x) =
\begin{cases}
c_1 \expf{\e \sigma \u^\ast(x/\e)} + c_2\int_{\R^n} \expf{\langle \sigma x,y \rangle -  \e \sigma \u(y)}\dif y, & \e \sigma > 0, \\
c_1 \expf{\e \sigma \u^\ast(x/\e)}, & \e \sigma <0,\\
0,&  \e \sigma = 0 ,
\end{cases}
\end{align*}
for every $\u \in \convs$ and $x \in \R^n$.
\end{thm}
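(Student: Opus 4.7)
The plan follows the same architecture as the proof of Theorem \ref{thm:lt2b}, but replacing the additive structure by its multiplicative analogue and invoking Theorem \ref{thm:ltn3} in place of Theorem \ref{thm:id1}. The ``if'' direction is a routine computation using properties of the Legendre and Laplace transforms. For the ``only if'' direction, I first apply Lemma \ref{lem:220512-1} in its multiplicative form \eqref{eq220511-1a} to the hypothesis \eqref{def:llt2}, obtaining $\mathcal{A}(y)=\sigma y$ for some $\sigma\in\R$; the assumption $\mathcal{A}\not\equiv o$ then forces $\sigma\neq 0$. By Lemma \ref{lem:unq} applied pointwise (each map $\u\mapsto\valf(\u)(x)$ is a continuous real-valued valuation on $\convs$), it suffices to determine $\valf$ on the dense class of functions $\indf_P+\lf_y+t$.

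Fix $t\in\R$. Since $\tau_{\tilde y}\indf_P=\indf_{P+\tilde y}$, the hypothesis \eqref{def:llt2} shows that the map $P\mapsto \valf(\indf_P+t)$ is a continuous, $\sln$ covariant, log-translation covariant valuation on $\MP^n$ satisfying \eqref{def:logt2a} with $Z_0\equiv\sigma$. Theorem \ref{thm:ltn3} therefore yields continuous functions $\zeta_1,\zeta_2,\zeta_3:\R\to\R$ with
\begin{equation*}
\valf(\indf_P+t)(x)=\zeta_1(t)\expf{\sigma h_P(x)}+\zeta_2(t)\expf{-\sigma h_{-P}(x)}+\zeta_3(t)\lap P(\sigma x).
\end{equation*}
To solve for these, I apply \eqref{def:llt2} and \eqref{def:dualtran-log} to $\indf_{P+\tilde y}+\lf_y+t$: the two hypotheses together give
\begin{equation*}
\valf(\indf_{P+\tilde y}+t-y\cdot\tilde y)(x-\e y)=\expf{\sigma x\cdot\tilde y}\,\valf(\indf_P+t)(x-\e y).
\end{equation*}
Substituting the representation and using $h_{P+\tilde y}=h_P+\lf_{\tilde y}$ and $\lap(P+\tilde y)(\xi)=\expf{\xi\cdot\tilde y}\lap P(\xi)$ reduces this to a single identity in $\zeta_1,\zeta_2,\zeta_3$. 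Specialising to $P=[0,e_1]$ (one-dimensional, so $\lap P=0$) and letting $a:=(x-\e y)\cdot e_1$ vary in sign decouples the $\zeta_1$ and $\zeta_2$ terms, producing $\zeta_i(t+s)=\zeta_i(t)\expf{-\e\sigma s}$ for $i=1,2$, whence $\zeta_i(t)=c_i\expf{-\e\sigma t}$. Feeding this back in with $\dim P=n$ isolates the Laplace contribution and forces $\zeta_3(t)=c_3\expf{-\e\sigma t}$.

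The main obstacle is the \emph{restriction step}: one must show that $c_2=0$ when $\e>0$, that $c_1=0$ when $\e<0$, that additionally $c_3=0$ when $\e\sigma<0$, and that $c_1=c_2=c_3=0$ when $\e=0$. My plan mimics Lemma \ref{lem:220519-1}: set $\u_i=\indf_{[i-1,i]\times[0,1]^{n-1}}+\lf_{ie_1}-(i^2-i)/2$ and $\v_m=\u_1\wedge\cdots\wedge\u_m$. Using \eqref{def:dualtran-log} and the indicator formula just derived, each $\valf\u_i(re_1)$ and each $\valf(\u_i\vee\u_{i+1})(re_1)$ becomes an explicit combination of the three exponential and Laplace pieces. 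The valuation property expresses $\valf\v_m$ as an inclusion-exclusion sum which, by telescoping geometric series, reduces to a linear combination of $e^{\sigma mr}$, $m$, and bounded terms. Since $\v_m$ epi-converges to some $\v\in\convs$, $\valf\v_m(re_1)$ must stay bounded in $m$, so the coefficients of the unbounded pieces must vanish. Choosing $r$ in each sign regime of $\sigma r$ and $r-\e i$ and matching dominant exponents forces precisely the claimed vanishings. This is subtler than Lemma \ref{lem:220519-1} because three structurally different terms can each blow up or decay depending jointly on $\sgn(\e)$ and $\sgn(\sigma)$, so the bookkeeping splits into all four sign cases.

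Once the restriction is settled, the surviving constants recombine into the claimed formula via $\e\sigma\,\u^\ast(x/\e)=\sigma h_P(x)-\e\sigma t$ when $\e>0$ (using $h_P(x/\e)=h_P(x)/\e$) and $\e\sigma\,\u^\ast(x/\e)=-\sigma h_{-P}(x)-\e\sigma t$ when $\e<0$ (using $h_P(x/\e)=-h_{-P}(x)/\e$ for $\e<0$), together with $\int_{\R^n}\expf{\sigma x\cdot y-\e\sigma(\indf_P+t)(y)}\dif y=e^{-\e\sigma t}\lap P(\sigma x)$. This identifies $\valf(\indf_P+\lf_y+t)$ with the claimed expression on the dense test class, and the pointwise application of Lemma \ref{lem:unq} extends the identity to all of $\convs$, completing the proof.
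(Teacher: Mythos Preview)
Your overall architecture (Lemma~\ref{lem:220512-1}, Theorem~\ref{thm:ltn3}, functional equation for the $\zeta_i$, then Lemma~\ref{lem:unq}) is exactly the paper's, and Step~i goes through as you describe. The gap is in the restriction step.

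The construction $\v_m=\u_1\wedge\cdots\wedge\u_m$ from Lemma~\ref{lem:220519-1} does \emph{not} force the coefficient of $\exp\{-\sigma h_{-P}\}$ to vanish when $\e\sigma>0$. Carry out the telescoping you describe explicitly (say $\e>0$, $r<\e$, so $r-i\e<0$ for all $i$): the support-function pieces collapse to
\[
\valf\v_m(re_1)\;=\;c_1\;+\;c_2\,\exp\!\Big\{-\tfrac{\e\sigma(m^2+m)}{2}+\sigma m r\Big\}\;+\;(\text{Laplace contribution}),
\]
because the $c_1$–sum over $\u_i$ shifts by one index to match the intersection sum, while the $c_2$–sum matches it exactly and leaves only the last term. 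When $\e\sigma>0$ this last term tends to $0$ for every choice of $r$, so boundedness of $\valf\v_m(re_1)$ imposes \emph{no} constraint on $c_2$. The same obstruction arises symmetrically for $c_1$ when $\e<0$, $\e\sigma>0$; and in the case $\e=0$ your sum reduces to $c_1e^{\sigma mr}+c_2+c_3(e^{\sigma mr}-1)/(\sigma r)$ (for $r>0$), which yields $c_1=c_3=0$ but never $c_2=0$ (for $\sigma>0$; the roles of $c_1,c_2$ swap for $\sigma<0$). No choice of sign of $r$ or mirror version of $\v_m$ fixes this: the ``wrong'' support-function exponent always coincides with the intersection exponent, so it telescopes to a single decaying term.

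The paper handles this with a \emph{different} test family: a two-piece function $\u_s=(\indf_{[s_1,s]e_1}+t_1)\wedge(\indf_{[s,s_2]e_1}+\lf_{\frac{t_2-t_1}{s_2-s}e_1}+\text{const})$ evaluated at $x=\e e_1$. As $s\to s_2$ the slope blows up but $\u_s$ epi-converges to $\indf_{[s_1,s_2]e_1}+t_1$; comparing the limit of the explicit formula with the value on the limit function gives $c_2(e^{-\e\sigma t_2}-e^{-\e\sigma t_1})=0$, hence $c_2=0$ regardless of the sign of $\sigma$ (Lemma~\ref{lem:220519-1a}). For $\e=0$ the paper instead lets $r\to\infty$ in $\indf_{[0,s]}+\lf_{re_1}\epic\indf_{\{o\}}$. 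Only the Laplace coefficient is killed by the $\v_m$ construction (Lemma~\ref{lem:220616-1}), and there the inequality $\valf\u_i(re_1)\ge c_3 e^{\sigma r}$ combined with vanishing intersection terms gives a genuinely divergent lower bound when $\e\sigma\le0$. You should split the restriction step accordingly.
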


Before proving Theorem \ref{thm:log}, we prove two lemmas of restriction that also hold for the planar case.
\begin{lem}\label{lem:220519-1a}
Let $n \ge 2$ and $\valf :\convs \rightarrow \FF{n}$ be a continuous valuation satisfying \eqref{def:dualtran-log}
with $\e \in \R$.
Further assume that there are constants $c_1,\tilde c_1,c_2,\sigma \in \R$ with $\sigma \neq 0$ such that
\begin{align}\label{eq220519-1a}
\valf (\indf_P+t)(x)=\expf{-\e \sigma t}\big(c_1\expf{\sigma h_{P}(x)}+\tilde c_1\expf{-\sigma h_{-P}(x)}\big)
\end{align}
for every $P \in \MP^n$, $x \in \ro{n}$, and $t \in \R$.
We have
\begin{align*}
\begin{cases}
\text{if~} \e>0, \text{~then~} \tilde c_1=0; \\
\text{if~} \e<0, \text{~then~} c_1=0; \\
\text{if~} \e=0, \text{~then~} c_1=\tilde c_1=0.
\end{cases}
\end{align*}
\end{lem}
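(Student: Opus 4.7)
The plan is to mirror the argument used in the proof of Lemma~\ref{lem:220519-1}. Take the same sequence $u_i = \indf_{[i-1,i] \times [0,1]^{n-1}} + \lf_{ie_1} - (i^2-i)/2$ and set $v_m = u_1 \wedge \cdots \wedge u_m$. Because the affine pieces of adjacent $u_i$'s match at $x_1 = i$ with common value $(i^2+i)/2$, one has $u_i \vee u_{i+1} = \indf_{\{i\} \times [0,1]^{n-1}} + (i^2+i)/2$, every $v_m$ belongs to $\convs$, and $v_m$ epi-converges to a piecewise linear super-coercive $v \in \convs$. The valuation property then gives
\[
\valf v_m(re_1) = \sum_{i=1}^{m} \valf u_i(re_1) - \sum_{i=1}^{m-1} \valf(u_i \vee u_{i+1})(re_1).
\]

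Reduce each $\valf u_i(re_1)$ via \eqref{def:dualtran-log} to $\valf\ab{\indf_{[i-1,i]\times[0,1]^{n-1}} - (i^2-i)/2}\ab{(r-\e i)e_1}$, substitute the hypothesis \eqref{eq220519-1a}, and evaluate the support functions of the coordinate rectangles. In the regime $r - \e i < 0$ for all $i \ge 1$, the sums telescope to
\[
\valf v_m(re_1) = c_1 + \tilde c_1 A_m,
\]
and in the regime $r - \e i > 0$ for all $i \ge 1$, symmetrically,
\[
\valf v_m(re_1) = c_1 A_m + \tilde c_1,
\]
with $A_m := \expfsmall{-\e\sigma(m^2+m)/2 + \sigma r m}$. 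Continuity of $\valf$ with $v_m \epic v$ forces $\valf v_m(re_1) \to \valf v(re_1) \in \R$, so $A_m$ together with its coefficient must remain bounded as $m\to\infty$.

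For $\e > 0$ the first regime is accessible for any $r < 0$. The dominant exponent of $A_m$ is $-\e\sigma m^2/2$; when $\sigma < 0$ this sends $A_m \to \infty$ and $\tilde c_1 = 0$ follows. For $\e < 0$, the second regime with $r > 0$ gives $c_1 = 0$ by the symmetric argument. For $\e = 0$, I introduce the mirrored counterpart $\tilde u_{-j} = \indf_{[-j,-j+1] \times [0,1]^{n-1}} + \lf_{-je_1} - (j^2-j)/2$, so that the symmetric sequence $\tilde v_m := \tilde u_{-m} \wedge \cdots \wedge \tilde u_{-1} \wedge \tilde u_1 \wedge \cdots \wedge \tilde u_m$ epi-converges to a V-shaped element of $\convs$. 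A parallel telescoping (using that $\tilde u_{-1} \vee \tilde u_1 = \indf_{\{0\}\times[0,1]^{n-1}}$ contributes the boundary term $c_1 + \tilde c_1$) yields $\valf \tilde v_m(re_1) = c_1 \expfsmall{\sigma r m} + \tilde c_1 \expfsmall{-\sigma r m}$, and choosing $r > 0$ then $r < 0$ in turn forces $c_1 = 0$ and $\tilde c_1 = 0$, settling the case $\e = 0$.

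The hard part will be the regime $\e\sigma > 0$ with $\e \ne 0$: there $A_m \to 0$, and the naive boundedness analysis is silent. I plan to resolve it by computing $\valf v_m(re_1)$ in the intermediate scenario $r \in (k\e,(k+1)\e)$ with $1 \le k < m$, where a similar telescoping produces $\valf v_m(re_1) = c_1 A_k + \tilde c_1(1 + A_m - A_k)$, and combining the resulting $r$-dependent limit $(c_1 - \tilde c_1) A_k(r) + \tilde c_1$ with the uniqueness afforded by Lemma~\ref{lem:unq}: the explicit $\valf v$ recovered from the $v_m$-sequence must be consistent with the valuation extension prescribed by \eqref{eq220519-1a} together with \eqref{def:dualtran-log} on every $\indf_P + \lf_y + t$, and since $A_k(r)$ is a genuinely non-constant exponential function of $r$ on each such interval, this matching is consistent only when $\tilde c_1 = 0$ (respectively $c_1 = 0$ for $\e < 0$), completing the three cases of the conclusion.
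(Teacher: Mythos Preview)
Your route is genuinely different from the paper's. The paper never uses the $v_m$ sequence here; for $\e>0$ it takes a ``kinked'' family
\[
u_s=\indf_{[s_1,s]e_1}+t_1\ \wedge\ \Bigl(\indf_{[s,s_2]e_1}+\lf_{\frac{t_2-t_1}{s_2-s}e_1}+t_2-\tfrac{t_2-t_1}{s_2-s}s_2\Bigr),
\]
computes $\valf u_s(\e e_1)$ via the valuation identity, and lets $s\to s_2$. The key feature is that the epi-limit is $\indf_{[s_1,s_2]e_1}+t_1$, whose $\valf$-value is \emph{already prescribed} by the hypothesis \eqref{eq220519-1a}; comparing the two expressions forces $\tilde c_1(e^{-\e\sigma t_2}-e^{-\e\sigma t_1})=0$ and hence $\tilde c_1=0$, uniformly in the sign of $\sigma$. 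For $\e=0$ the paper uses the simpler family $\indf_{[0,s]e_1}+\lf_{re_1}$ with $r\to\infty$.

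Your telescoping is correct, and your argument does settle $\e=0$ and the subcase $\e\sigma<0$. But the plan for $\e\sigma>0$ has a genuine gap. You obtain $\valf v(re_1)=(c_1-\tilde c_1)A_k(r)+\tilde c_1$ on $(k\e,(k+1)\e)$ and then invoke Lemma~\ref{lem:unq}; however, that lemma is a \emph{uniqueness} statement, not a formula. It says a continuous valuation is determined by its values on the functions $\indf_P+\lf_y+t$, and your $v_m$-computation is precisely the evaluation of that unique extension at $v$---there is no independent second expression for $\valf v$ to match it against. Indeed, the formula you derived is internally consistent (e.g.\ it is continuous across each endpoint $r=k\e$, since $A_{k-1}(k\e)=A_k(k\e)$), so no contradiction surfaces. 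What is missing is a limiting function whose $\valf$-value is known \emph{a priori} from the hypothesis; this is exactly what the paper achieves by letting $u_s$ epi-converge back to an indicator.
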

\begin{proof}
Case 1). $\e>0$.

Fix $s_1,s_2,t_1,t_2 \in \R$ such that $s_1 < s_2-(t_2-t_1)$ and $t_1 <t_2$.
Set
$\u_s = \indf_{[s_1,s]e_1} + t_1 \wedge \ab{\indf_{[s,s_2]e_1}+\lf_{\frac{t_2-t_1}{s_2-s}e_1}+t_2-\frac{t_2-t_1}{s_2-s}s_2}$ for $s_2-(t_2-t_1) < s < s_2$.

By \eqref{def:dualtran-log}, \eqref{eq220519-1a} and the valuation property, we get
\begin{align*}
\valf \u_s(\e e_1) &=\valf \ab{\indf_{[s_1,s]e_1} + t_1}(\e e_1)
+\valf \ab{\indf_{[s,s_2]e_1}+\lf_{\frac{t_2-t_1}{s_2-s}}+t_2-\frac{t_2-t_1}{s_2-s}s_2}(\e e_1) \\
&\qquad \qquad -\valf \ab{\indf_{se_1 + t_1}}(\e e_1),
\end{align*}
where,
\begin{align*}
&\valf \ab{\indf_{[s_1,s]e_1} + t_1}(\e e_1)= c_1\expf{\e \sigma s - \e \simga t_1} + \tilde c_1\expf{\e \sigma s_1 - \e \simga t_1},\\
&~~ \valf \ab{\indf_{se_1} + t_1}(\e e_1)=
c_1\expf{\e \sigma s - \e \simga t_1} + \tilde c_1\expf{\e \sigma s - \e \simga t_1},
\end{align*}
and
\begin{align*}
&\valf \ab{\indf_{[s,s_2]e_1}+\lf_{\frac{t_2-t_1}{s_2-s}e_1}+t_2-\frac{t_2-t_1}{s_2-s}s_2}(\e e_1) \\
&\qquad \qquad =\valf \ab{\indf_{[s,s_2]e_1}+t_2-\frac{t_2-t_1}{s_2-s}s_2}\ab{\e (1- \frac{t_2-t_1}{s_2-s}) e_1} \\
&\qquad \qquad =c_1\expf{\e \sigma s - \e \simga t_1} + \tilde c_1\expf{\e \sigma s_2 - \e \simga t_2}.
\end{align*}
Thus,
\begin{align*}
\valf \u_s(\e e_1) &= c_1\expf{\e \sigma s - \e \simga t_1}+ \tilde c_1\expf{\e \sigma s_1 - \e \simga t_1} \\
&\qquad  + \tilde c_1\expf{\e \sigma s_2 - \e \simga t_2} - \tilde c_1\expf{\e \sigma s - \e \simga t_1}.
\end{align*}
Notice that
\begin{align*}
\valf \ab{\indf_{[s_1,s_2]e_1} + t_1}(\e e_1)= c_1\expf{\e \sigma s_2 - \e \simga t_1} + \tilde c_1\expf{\e \sigma s_1 - \e \simga t_1}.
\end{align*}
Therefore, $\lim_{s \to s_2}^{\epi}\u_s = \indf_{[s_1,s_2]e_1} + t_1$ and the continuity of $\valf$ imply (since $\e\sigma \neq 0$)
\begin{align*}
\tilde c_1=0.
\end{align*}

\myvskip
Case 2). $\e <0$.

Similar to case 1), we get $c_1=0$.

\myvskip
Case 3). $\e =0$.

\eqref{def:dualtran-log} and \eqref{eq220519-1a} give
\begin{align*}
\valf \ab{\indf_{[0,s]} + \lf_{re_1}}(e_1)=c_1\expf{\sigma s} + \tilde c_1
\end{align*}
for $s \ge 0$.
Also, $\lim_{r \to \infty}^{\epi}\indf_{[0,s]} + \lf_{re_1}= \indf_{o}$.
Thus the continuity of $\valf$ implies $c_1=0$.
Similarly, $\lim_{r \to \infty} \valf \ab{\indf_{[0,s]} + \lf_{re_1}}(-e_1) = \valf (\indf_{o})(-e_1)$ implies that
$\tilde c_1=0$.
\end{proof}

\begin{lem}\label{lem:220616-1}
Let $n \ge 2$ and $\valf :\convs \rightarrow \FF{n}$ be a continuous valuation satisfying \eqref{def:dualtran-log}
with $\e \in \R$.
Further assume that there are constants $c_2,\sigma \in \R$ with $\sigma \neq 0$ such that
\begin{align}\label{eq220616-1}
\valf (\indf_P+t)(x)=c_2 \expf{-\e \sigma t}\lap P(\sigma x)
\end{align}
for every $P \in \MP^n$, $x \in \ro{n}$, and $t \in \R$.
If $\e \sigma \le 0$, then $c_2=0$.
\end{lem}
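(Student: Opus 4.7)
The plan is to exhibit a sequence $\u_r \in \convs$ that epi-converges to a limit $\u_\infty \in \convs$ whose defining set has $n$-dimensional Lebesgue measure zero. By \eqref{eq220616-1} this forces $\valf \u_\infty \equiv 0$, while \eqref{def:dualtran-log} lets us compute $\valf \u_r$ explicitly as the Laplace transform of a fixed full-dimensional polytope evaluated at a moving point. When $\e\sigma \le 0$, this explicit value either blows up with $r$ or is a nonzero constant in $r$, contradicting the pointwise continuity $\valf \u_r \pc \valf \u_\infty$ unless $c_2 = 0$.

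Concretely, I would fix $\delta > 0$, set $P := [0,1] \times [-\delta,\delta]^{n-1}$ (a full-dimensional box), and take $\u_r := \indf_P + \lf_{re_1}$ for $r > 0$. Since $\dom \u_r = P$ is compact and $\u_r$ is affine on it, $\u_r \in \convs$; a routine Kuratowski check gives $\u_r \epic \u_\infty := \indf_{\{0\} \times [-\delta,\delta]^{n-1}}$, because $\u_r \equiv 0$ on this boundary face for every $r$ while $\u_r(z) = r z_1 \to +\infty$ on the rest of $P$. Applying \eqref{def:dualtran-log} together with \eqref{eq220616-1} (taking $t = 0$) yields
\[
\valf \u_r (x) = \valf \indf_P (x - \e r e_1) = c_2 \lap P(\sigma(x - \e r e_1)),
\]
whereas $\valf \u_\infty \equiv 0$ since the underlying set is $(n-1)$-dimensional.

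Evaluating at $x = e_n$ factors the Laplace integral as $\left(\int_0^1 e^{-\sigma \e r z_1} \dif z_1 \right) \cdot \left(\int_{[-\delta,\delta]^{n-1}} e^{\sigma z_n} \dif z' \right)$, where the second factor is a strictly positive constant depending only on $n, \sigma, \delta$. When $\e\sigma < 0$ the first factor grows like $e^{|\e\sigma| r}/(|\e\sigma| r) \to \infty$, so $c_2 \ne 0$ would force $|\valf \u_r(e_n)| \to \infty$, contradicting $\valf \u_r (e_n) \to \valf \u_\infty(e_n) = 0$ guaranteed by continuity of $\valf$. When $\e = 0$ the first factor equals $1$ and the displayed expression reduces to the $r$-independent, nonzero quantity $c_2 \lap P(\sigma e_n)$, which again contradicts the limit being $0$. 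Either subcase yields $c_2 = 0$. The one mildly delicate step is the epi-convergence verification, but since $P$ is compact and $\u_\infty$ is the indicator of a boundary face, it reduces to the standard observation that steepening a linear term on a box concentrates the epigraph onto the face where that linear term vanishes.
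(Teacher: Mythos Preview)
Your argument is correct and takes a genuinely simpler route than the paper's. The paper reuses the piecewise construction $v_m = u_1 \wedge \dots \wedge u_m$ from the proof of Lemma~\ref{lem:220519-1}: it invokes the valuation identity to expand $\valf v_m(re_1)$ as a telescoping sum, observes that the join terms $\valf(u_i \vee u_{i+1})$ vanish (lower-dimensional support), and bounds each remaining piece $\valf u_i(re_1)$ from below by $c_2\, e^{\sigma r}$ using $\e\sigma \le 0$; this yields $|\valf v_m(re_1)| \ge m\,|c_2|\, e^{\sigma r} \to \infty$, contradicting continuity.

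Your approach never touches the valuation property: a single box $P$, a single tilt $\ell_{re_1}$, and the rule \eqref{def:dualtran-log} already give $\valf u_r(e_n) = c_2\, \lap P(\sigma e_n - \sigma\e r e_1)$, which diverges when $\e\sigma < 0$ and sits at the nonzero constant $c_2\,\lap P(\sigma e_n)$ when $\e = 0$, while the epi-limit has lower-dimensional support and hence value $0$. This uses strictly fewer hypotheses and is in fact the natural extension of the paper's own Case~3 argument in Lemma~\ref{lem:220519-1a}, where exactly the same device $\indf_{[0,s]e_1} + \ell_{re_1} \epic \indf_{\{o\}}$ is deployed. The paper presumably recycles the $v_m$ machinery for parallelism with Lemma~\ref{lem:220519-1}; what your version buys is brevity and a proof that would survive even if the valuation hypothesis were dropped from the lemma.
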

\begin{proof}
We verify the desired result indirectly.
First, assume $c_2 >0$.
Let $i,m > 0$ be integers, $u_i=\indf_{[i-1,i] \times [0,1]^{n-1}}+\lf_{ie_1} - \frac{i^2-i}{2}$ and $v_m=u_1 \wedge \dots \wedge u_m$.
For $r \in \R$, the properties \eqref{def:dualtran-log} and \eqref{eq220616-1} show
\begin{align*}
\valf \ab{\indf_{[i-1,i]\times [0,1]^{n-1}} +\lf_{ie_1} - \frac{i^2-i}{2}}(re_1)
&= \valf \ab{\indf_{[i-1,i]\times [0,1]^{n-1}} }((r-\e i)e_1) \\
&=c_2 \expf{\frac{i^2-i}{2}\e \sigma} \int_{i-1}^i \expf{\sigma (r-\e i) s } \dif s \\
&\ge c_2 \expf{\frac{i^2-i}{2}\e \sigma} \expf{\sigma (r-\e i) (i-1)} \\
&\ge c_2 \expf{\sigma r}.
\end{align*}
Together with the valuation property, we further get
\begin{align*}
&\valf \v_m (re_1) \notag\\
&= \sum_{i=1}^m \valf \u_i(re_1)- \sum_{i=1}^{m-1} \valf (u_i \vee u_{i+1})(re_1) \notag \\
&=\sum_{i=1}^m \valf \ab{\indf_{[i-1,i]\times [0,1]^{n-1}} +\lf_{ie_1} - \frac{i^2-i}{2}}(re_1)-\sum_{i=1}^{m-1} \valf \left(\indf_{i\times [0,1]^{n-1}}+ \frac{i^2+i}{2}\right)(re_1)\\
&\ge m c_2 \expf{\sigma r},
\end{align*}
where we have used $\valf \left(\indf_{i\times [0,1]^{n-1}}+ t\right)(re_1)=c_2 \expf{-\e \sigma t}\lap (i\times [0,1]^{n-1})(\sigma x)=0$ for any $t \in \R$.
It contradicts $\lim_{m \to \infty}\valf v_m (re_1)=\valf \ab{\lim_{m \to \infty}^{\epi} v_m} (re_1) \neq \pm \infty$.

For $c_2<0$, similar to the above argument, we have $\valf \v_m (re_1) \le m c_2 \expf{\sigma r}$ which also contradicts $\lim_{m \to \infty}\valf v_m (re_1)=\valf \ab{\lim_{m \to \infty}^{\epi} v_m} (re_1) \neq \pm \infty$.
\end{proof}

\begin{proof}[Proof of Theorem \ref{thm:log}]
For the ``if" part, we only prove that
\begin{align}\label{eq220616}
\int_{\R^n} \expf{\langle \sigma x,y \rangle -  \e \sigma \u(y)}\dif y < \infty,
\end{align}
when $\e \sigma >0$.
The other parts follow easily from \S \ref{s2}.
Indeed,
\begin{align*}
\int_{\R^n} \expf{\langle \sigma x,y \rangle -  \e \sigma \u(y)}\dif y = \lap (\expf{-\e \sigma \u})(\sigma x).
\end{align*}
Since $-\e \sigma \u \in \convs$ for fixed $\e \sigma >0$, we conclude \eqref{eq220616} from $\lap (\expf{-\u})(x)<\infty$ for every fixed $\u \in \convs$ and $x \in \R^n$ (which is proved in \S \ref{s2}).

\myvskip
The ``only if" part is analogous to the proof of Theorem \ref{thm:lt2b}.

\textbf{Step i}. \textit{Assertion. There are constants $c_1,\tilde c_1,c_2,\sigma \in \R$ with $\sigma \neq 0$ such that
\begin{align*}
\valf (\indf_P+t) (x)=\expf{-\e \sigma t}\big(c_1\expf{\sigma h_{P}(x)}+\tilde c_1\expf{-\sigma h_{-P}(x)}+c_2\lap P(\sigma x)\big)
\end{align*}
for every $P \in \MP^n$, $x \in \ro{n}$, and $t \in \R$.}

\myvskip
Indeed, similar to the first step in the proof of Theorem \ref{thm:lt2b}, by \eqref{def:dualtran-log}, \eqref{def:llt2} and Lemma \ref{lem:220512-1} (clearly $\sigma \neq 0$), for $u=\indf_P+\lf_y +t$, we have
\begin{align*}
\valf (\indf_{P+\tilde{y}} - y \cdot \tilde{y} +t)\left(x-\e y\right) &= \valf (\tau_{\tilde{y}} \u)(x) \notag \\
&= \expf{\sigma \tilde{y} \cdot x} \valf \u(x)  \notag \\
&=\expf{\sigma \tilde{y} \cdot x} \valf (\indf_P +t) (x -\e y).
\end{align*}
Together with Theorem \ref{thm:ltn3}, there are functions $\zeta_1,\tilde \zeta_1,\zeta_2:\R \to\R$ such that
\begin{align*}
\valf (\indf_P +t) (x)=\zeta_1(t)\expf{\sigma  h_{P}(x)}+\tilde \zeta_1(t)\expf{-\sigma h_{-P}(x)} + \zeta_2(t)\lap K(\sigma x)
\end{align*}
for any $x \neq o$, and thus
\begin{align}\label{eq220320-1}
&\zeta_1(- y \cdot \tilde{y} +t)\expf{\sigma  h_{(P+\tilde{y})}(x-\e y)}+\tilde \zeta_1(- y \cdot \tilde{y} +t)\expf{-\sigma h_{-(P+\tilde{y})}(x-\e y)} \notag\\
& \qquad \qquad + \zeta_2(- y \cdot \tilde{y} +t)\int_{P+\tilde{y}} \expf{\sigma (x-\e y) \cdot w} \dif w  \notag\\
&=\expf{\sigma \tilde{y} \cdot x} \ab{\zeta_1(t)\expf{\sigma h_{P}(x-\e y)}+\tilde \zeta_1(t)\expf{-\sigma h_{-P}(x-\e y)}} \notag\\
& \qquad \qquad  + \expf{\sigma \tilde{y} \cdot x} \ab{\zeta_2(t)\int_{P} \expf{\sigma (x-\e y) \cdot w} \dif w}
\end{align}
for any $x - \e y \neq o$.

For arbitrary $s\ge -r$, we can find an arbitrary dimensional $P \in \MP^n$ such that $s=h_{P}(x -\e y)$ and $r=h_{-P}(x -\e y)$.
Therefore, \eqref{eq220320-1} implies
\begin{align*}
&\ab{\zeta_1(- y \cdot \tilde{y} +t)\expf{\sigma s}+\tilde \zeta_1(- y \cdot \tilde{y} +t)\expf{-\sigma r}}  \expf{- \e \sigma \tilde{y}\cdot y}  \\
& \qquad \qquad + \ab{\zeta_2(- y \cdot \tilde{y} +t)\int_{P} \expf{\sigma (x-\e y) \cdot w} \dif w} \expf{- \e \sigma \tilde{y}\cdot y}  \notag\\
&=\ab{\zeta_1(t)\expf{ \sigma s}+\tilde \zeta_1(t)\expf{-\sigma r}+\zeta_2(t)\int_{P} \expf{\sigma (x-\e y) \cdot w} \dif w}.
\end{align*}
Letting $\dim P <n$ and $r \to \infty$, we get
\begin{align*}
\zeta_1(t)=c_1 \expf{-\e \sigma t},~\forall t \in \R,
\end{align*}
with some constant $c_1 \in \R$.
It then implies
\begin{align*}
\tilde \zeta_1(t)=\tilde c_1 \expf{-\e \sigma t},~\forall t \in \R,
\end{align*}
with some constant $\tilde c_1 \in \R$.
Moreover, for $\dim P=n$, together with the above two relations, we finally get
\begin{align*}
\zeta_2(t)=c_2 \expf{-\e \sigma t},~\forall t \in \R,
\end{align*}
with some constant $c_2 \in \R$.
This step is completed.

\myvskip
\textbf{Step ii.}
Define $\valf_1:\convs \to \FF{n}$ by
\begin{align*}
\valf_1 \u(x) =
\begin{cases}
c_1 \expf{\e \sigma \u^\ast(x/\e)}, & \e > 0, \\
\tilde c_1 \expf{\e \sigma \u^\ast(x/\e)}, &\e<0,\\
0, & \e =0.
\end{cases}
\end{align*}
for every $\u \in \convs$ and $x \in \R^n$.

By previous step and Lemma \ref{lem:220519-1a},
\begin{align}\label{eq220519-2a}
\valf (\indf_P+t)(x)=
\begin{cases}
c_1\expf{-\e \sigma t}\expf{\sigma h_{P}(x)}, &\e>0, \\
\tilde c_1\expf{-\e \sigma t}\expf{-\sigma h_{-P}(x)}, &\e<0, \\
0, &\e=0,
\end{cases}
\end{align}
for every $x \in \ro{n}$, $t \in \R$ and $P \in \MP(\R^{n-1})$.
Similar to the part after \eqref{eq220519-2} in the proof of Theorem \ref{thm:lt2b}, we get
\begin{align*}
\valf(\u)=\valf_1(\u)
\end{align*}
for every $\u \in \convs$ with $\dom \u  \subset \R^{n-1}$.
Consider $\valf_2:=\valf-\valf_1$.
It is easy to see that $\valf_2$ also satisfies all the assumptions and $\valf_2(\u)=0$ for every $\u \in \convs$ with $\dom \u  \subset \R^{n-1}$.
Thus previous step and Lemma \ref{lem:220616-1} shows
\begin{align*}
\valf_2 (\indf_P+t)(x)=
\begin{cases}
c_2\expf{-\e \sigma t}\lap P(\sigma x), &\e \sigma>0, \\
0, &\e \sigma \le 0, \\
\end{cases}
\end{align*}
for every $x \in \ro{n}$, $t \in \R$ and $P \in \MP^n$.
Thus we also obtain
\begin{align*}
\valf_2(\u)(x)
=\begin{cases}
c_2\int_{\R^n} \expf{\langle \sigma x,y \rangle -  \e \sigma \u(y)}\dif y, &\e \sigma>0, \\
0, &\e \sigma \le 0,
\end{cases}
\end{align*}
which completes the proof.
\end{proof}

\section{Dual valuations and characterizations of the identity transform}\label{sec:dualv}
In this final section, we show how to use dual valuations to get some classifications of valuations on $\convf$ by the main results introduced in the first section.
It is proved in \cite{CLM2017hessian} that the transform $\valf :\convf \rightarrow \FF{n}$ is a continuous valuation
if and only if the transform $\valf^\ast:\convs \rightarrow \FF{n}$, defined by $$\valf^\ast(\u)=\valf (\u^\ast), \forall \u \in \convs,$$
is a continuous valuation.
We call $\valf^\ast$ the \emph{dual valuation} of $\valf$ and vice versa.

The following properties of $\valf$ are dual to the previous properties of $\valf^\ast$ on $\convs$.
We say a transform $\valf :\convf \rightarrow \FF{n}$ is \emph{$\sln$ covariant} if
\begin{align*}
\valf (\u \circ \phi^{-1})= (\valf \u) \circ \phi^{-1}
\end{align*}
for every $u \in \convf$ and $\phi \in \sln$;
it is a \emph{translation homomorphism} if
\begin{align*}
\valf (\tau_y \u)=\tau_y \valf (\u), ~\valf (\u+\lf_{y})=\valf (\u) + \lf_{y}
\end{align*}
for every $u \in \convf$ and $y \in \R^n$;
and it is \emph{continuous} if $\u_i \epic \u$ implies $\valf \u_i \pc \valf \u$.

The following result follows directly from Theorem \ref{thm:Leg}.

\begin{thm}\label{thm:idt}
Let $n \ge 3$. A transform $\valf :\convf \rightarrow \FF{n}$ is a continuous and $\sln$ covariant valuation which is a translation homomorphism, if and only if there is a constant $c \in \R$ such that
\[\valf \u = \u + c\]
for every $\u\in \convf$.
\end{thm}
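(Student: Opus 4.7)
The plan is to reduce the theorem to Theorem \ref{thm:Leg} via the dual valuation construction. Given $\valf:\convf \to \FF{n}$ satisfying the hypotheses, define
\[
\valf^\ast(\u) := \valf(\u^\ast), \qquad \u \in \convs.
\]
Since the Legendre transform is a bijection $\convs \to \convf$ (super-coercive, lower semi-continuous, proper convex functions correspond exactly to finite convex functions under $\ast$), every $\v \in \convf$ can be uniquely written as $\u^\ast$ with $\u \in \convs$, and the involution $(\u^\ast)^\ast = \u$ holds. So once I characterize $\valf^\ast$, I recover $\valf$ on all of $\convf$.

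The core of the proof is verifying that $\valf^\ast$ satisfies the hypotheses of Theorem \ref{thm:Leg}. First, $\valf^\ast$ is a continuous valuation on $\convs$; this is the observation of \cite{CLM2017hessian} already cited at the start of \S \ref{sec:dualv} (continuity uses that $\u_i \epic \u$ in $\convs$ implies $\u_i^\ast \epic \u^\ast$ in $\convf$, which is property (D8) of the Legendre transform). For $\sln$ contravariance, I would compute, for $\phi \in \sln$,
\[
\valf^\ast(\u \circ \phi^{-1}) = \valf\bigl((\u \circ \phi^{-1})^\ast\bigr) = \valf(\u^\ast \circ \phi^{t}) = \bigl(\valf(\u^\ast)\bigr) \circ \phi^{t} = \valf^\ast(\u) \circ \phi^{t},
\]
using property (D2) of the Legendre transform and the $\sln$ covariance of $\valf$. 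For the translation conjugation, properties (D3) and (D4) of the Legendre transform give $(\tau_y \u)^\ast = \u^\ast + \lf_y$ and $(\u + \lf_y)^\ast = \tau_y \u^\ast$, and combined with the translation homomorphism property of $\valf$ this yields
\[
\valf^\ast(\tau_y \u) = \valf(\u^\ast + \lf_y) = \valf(\u^\ast) + \lf_y = \valf^\ast(\u) + \lf_y,
\]
and similarly $\valf^\ast(\u + \lf_y) = \tau_y \valf^\ast(\u)$.

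With the hypotheses of Theorem \ref{thm:Leg} verified, I conclude that there is a constant $c \in \R$ with $\valf^\ast(\u) = \u^\ast + c$ for every $\u \in \convs$, that is, $\valf(\u^\ast) = \u^\ast + c$. Given any $\v \in \convf$, setting $\u := \v^\ast \in \convs$ gives $\v = \u^\ast$ and hence $\valf(\v) = \v + c$, which is the desired conclusion. The converse direction is immediate since $\v \mapsto \v + c$ is plainly a continuous, $\sln$ covariant valuation and a translation homomorphism.

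There is essentially no serious obstacle here: the work is entirely in the bookkeeping of dualizing each axiom, and the heavy lifting was done in Theorem \ref{thm:Leg}. The one place to be slightly careful is checking that the duality genuinely exchanges $\sln$ covariance with $\sln$ contravariance and the translation homomorphism property with translation conjugation, but these follow from the listed properties (D2)--(D4) of $\ast$.
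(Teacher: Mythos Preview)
Your proposal is correct and follows essentially the same approach as the paper: define the dual valuation $\valf^\ast(\u)=\valf(\u^\ast)$, verify that it inherits continuity, the valuation property, $\sln$ contravariance, and translation conjugation from the corresponding properties of $\valf$ via (D2)--(D4), and then invoke Theorem~\ref{thm:Leg}. The paper's proof is simply a terser version of what you wrote, asserting the dualized properties without spelling out the computations.
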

\begin{proof}[Proof of Theorem \ref{thm:idt}]
Suppose $\valf :\convf \rightarrow \FF{n}$ is a continuous and $\sln$ covariant valuation that is a translation homomorphism.
Then the dual valuation $\valf^\ast:\convs \rightarrow \FF{n}$ is a continuous and $\sln$ contravariant valuation that is a translation conjugation.
By Theorem \ref{thm:Leg},
\begin{align*}
\valf(\u ^\ast) = \valf^\ast(\u) = \legt{\u} + c,~ \forall \u \in \convs
\end{align*}
for some constant $c$.
Thus $\valf \u = \u + c$ for every $\u \in \convf$, which completes the ``only if" part.

The ``if" part is trivial.
\end{proof}

We remark that Hofst\"{a}tter and Knoerr \cite{MR4567496} established a characterization of the identity transform by $\gln$ covariant endomorphisms.

\myvskip
We can deal with the log-concave functions similarly.
Notice that $\u \in \convf$ if and only if the log-concave function $e^{-\u(x)}>0$ for every $x \in \R^n$.
Denote $\lcpos:=\{e^{-\u}: \u \in \convf\}$.
Following directly from the dual valuations on convex functions, the transform $\valf:\lcpos \to \FF{n}$ is a continuous valuation if and only if the \emph{dual valuation} $\valf^\circ:\lcsc \to \FF{n}$ is a continuous valuation.

We say a transform $\valf:\lcpos \to \FF{n}$ is \emph{$\sln$ covariant} if
\begin{align*}
\valf (f \circ \phi^{-1})= (\valf f) \circ \phi^{-1}
\end{align*}
for every $f \in \lcpos$ and $\phi \in \sln$;
it is a \emph{translation homomorphism on log-concave functions} if
\begin{align*}
\valf (\tau_y f)=\tau_y \valf (f),~\valf (\expfsmall{-\lf_{y}} f)=\expfsmall{-\lf_{y}} \valf f,
\end{align*}
for every $f \in \lcpos$ and $y \in \R^n$;
and it is \emph{continuous} if $f_i \hc f$ implies $\valf f_i \pc \valf f$.

Similar to the case of convex functions, the following Theorems follow directly from Theorem \ref{mthm:log1} and \ref{mthm:log}.
\begin{thm}
Let $n \ge 3$. A transform $\valf :\lcpos \rightarrow \FF{n}$ is a continuous and $\sln$ covariant valuation which is a translation homomorphism on log-concave functions if and only if there is a constant $c \in \R$ such that
\begin{align*}
\valf f= c f
\end{align*}
for every $f \in \lcpos$.
\end{thm}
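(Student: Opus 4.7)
The ``if'' direction is immediate: $f \mapsto cf$ is manifestly a continuous, $\sln$ covariant valuation and a translation homomorphism on log-concave functions, so I focus on the ``only if'' direction. My plan is to pass to the dual valuation $\valf^\circ : \lcsc \to \FF{n}$ defined by $\valf^\circ f := \valf(f^\circ)$ and reduce to Theorem \ref{mthm:log1}. Since $f \mapsto f^\circ$ is an involutive bijection between $\lcpos$ and $\lcsc$, and the passage to dual valuations preserves the valuation property exactly as in the convex-function setting recalled at the start of \S \ref{sec:dualv}, $\valf^\circ$ is automatically a valuation.

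The main task will be to show that the structural properties of $\valf$ translate directly into the hypotheses of Theorem \ref{mthm:log1}. The relevant identities for the log-concave duality are consequences of properties (D2)--(D4) of the Legendre transform listed in \S \ref{s2}:
\begin{align*}
(\tau_y f)^\circ = \expfsmall{-\lf_y} f^\circ, \qquad (\expfsmall{-\lf_y} f)^\circ = \tau_y f^\circ, \qquad (f \circ \phi^{-1})^\circ = f^\circ \circ \phi^t,
\end{align*}
for $f \in \lcsc$, $y \in \R^n$ and $\phi \in \sln$. Combined with the assumed translation homomorphism and $\sln$ covariance of $\valf$, these yield at once that $\valf^\circ$ is $\sln$ contravariant and a translation conjugation on log-concave functions. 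For continuity I will use that $f_i \hc f$ in $\lcsc$ is equivalent to $u_i \epic u$ in $\convs$ (writing $f_i = e^{-u_i}$, $f = e^{-u}$); by epi-continuity of the Legendre transform on $\cvx$ one gets $u_i^\ast \epic u^\ast$, hence $f_i^\circ \hc f^\circ$ in $\lcpos$, and the continuity of $\valf$ then transfers to continuity of $\valf^\circ$.

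Once these hypotheses are verified, Theorem \ref{mthm:log1} supplies a constant $c \in \R$ with $\valf^\circ f = c f^\circ$ for every $f \in \lcsc$. The conclusion then follows from the involutivity of the log-concave duality: for each $g \in \lcpos$, setting $f := g^\circ \in \lcsc$ gives $f^\circ = g$, so $\valf g = \valf(f^\circ) = \valf^\circ f = c f^\circ = c g$. I do not anticipate any serious obstacle; the main care required is simply matching the two translations with the correct sides of the translation conjugation relations, since they swap roles under duality.
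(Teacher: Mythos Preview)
Your proposal is correct and follows exactly the approach the paper intends: pass to the dual valuation $\valf^\circ$ on $\lcsc$, verify that the $\sln$ covariance and translation homomorphism of $\valf$ become $\sln$ contravariance and translation conjugation for $\valf^\circ$ via the identities $(\tau_y f)^\circ = e^{-\lf_y}f^\circ$, $(e^{-\lf_y}f)^\circ = \tau_y f^\circ$, $(f\circ\phi^{-1})^\circ = f^\circ\circ\phi^t$, then apply Theorem~\ref{mthm:log1} and undo the duality. The paper merely states that the result ``follows directly'' in the same manner as Theorem~\ref{thm:idt}, so your write-up in fact supplies more detail than the paper itself.
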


\begin{thm}
Let $n \ge 3$. A transform $\valf :\lcpos \rightarrow \FF{n}$ is a continuous and $\sln$ covariant valuation which satisfies
\begin{align*}
\valf (\tau_y f)=\tau_y \valf (f),~\valf (\expfsmall{-\lf_{y}} f)=\expfsmall{\lf_{y}} \valf f,
\end{align*}
for every $f \in \lcpos$ and $y \in \R^n$, if and only if there are constants $c_1,c_2 \in \R$ such that
\[\valf f= \frac{c_1}{f} + c_2 \lap f^\circ\]
for every $f \in \lcpos$.
\end{thm}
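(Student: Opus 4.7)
The plan is to reduce the statement to Theorem \ref{mthm:log} by passing to the dual valuation, exactly as in the proof of Theorem \ref{thm:idt}. Given $\valf:\lcpos \to \FF{n}$ satisfying the listed hypotheses, I would form $\valf^\circ:\lcsc \to \FF{n}$ by $\valf^\circ(g) := \valf(g^\circ)$ for every $g \in \lcsc$. By the dual valuation correspondence recalled at the beginning of \S \ref{sec:dualv}, $\valf^\circ$ is automatically a continuous valuation, since epi-continuity of the Legendre transform together with the equivalence of epi-convergence and hypo-convergence on the two sides takes care of the topological requirement.

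The next step is to verify that $\valf^\circ$ satisfies the remaining hypotheses of Theorem \ref{mthm:log}. For $\sln$ contravariance, writing $g = e^{-u}$ with $u \in \convs$, the identity $(u \circ \phi^{-1})^\ast = u^\ast \circ \phi^t$ gives $(g \circ \phi^{-1})^\circ = g^\circ \circ \phi^t$, so the $\sln$ covariance of $\valf$ (applied with $\phi^t \in \sln$) produces $\valf^\circ(g \circ \phi^{-1}) = (\valf^\circ g) \circ \phi^t$. For the two translation relations, the standard Legendre identities $(\tau_y u)^\ast = u^\ast + \lf_y$ and $(u+\lf_y)^\ast = \tau_y u^\ast$ translate into
\begin{align*}
(\tau_y g)^\circ = \expfsmall{-\lf_y} g^\circ, \qquad (\expfsmall{-\lf_y} g)^\circ = \tau_y g^\circ.
\end{align*}
Applying $\valf$ to both sides and inserting the two hypotheses on $\valf$ then yields
\begin{align*}
\valf^\circ(\tau_y g) = \valf(\expfsmall{-\lf_y} g^\circ) = \expfsmall{\lf_y}\valf(g^\circ) = \expfsmall{\lf_y}\valf^\circ g,
\end{align*}
and symmetrically $\valf^\circ(\expfsmall{-\lf_y} g) = \tau_y \valf^\circ g$. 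These are precisely the two relations assumed in Theorem \ref{mthm:log}.

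Theorem \ref{mthm:log} now supplies constants $c_1, c_2 \in \R$ with $\valf^\circ g = c_1/g^\circ + c_2 \lap g$ for every $g \in \lcsc$. To translate this back, given $f \in \lcpos$ I set $g := f^\circ \in \lcsc$ and invoke the biduality $f^{\circ\circ} = f$ (valid because $-\log f \in \convf$ is already convex and lower semi-continuous), obtaining
\begin{align*}
\valf f = \valf(f^{\circ\circ}) = \valf^\circ(f^\circ) = \frac{c_1}{(f^\circ)^\circ} + c_2 \lap f^\circ = \frac{c_1}{f} + c_2 \lap f^\circ,
\end{align*}
which is the claimed formula. The converse is a routine check using the properties (D\ref{plapval})--(D\ref{plapcont}) of $\lap$ combined with the matching properties of the duality map $f \mapsto f^\circ$. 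I expect no serious obstacle: once the dual valuation has been set up, Theorem \ref{mthm:log} does essentially all of the real work, and the only care needed is the bookkeeping confirming that the two translation relations genuinely swap roles under duality.
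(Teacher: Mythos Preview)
Your proposal is correct and follows exactly the approach indicated by the paper, which simply states that this theorem follows directly from Theorem~\ref{mthm:log} via the dual valuation, in the same way that Theorem~\ref{thm:idt} was derived from Theorem~\ref{thm:Leg}. Your write-up supplies the routine verification (the swap of the two translation relations under $f\mapsto f^\circ$ and the biduality $f^{\circ\circ}=f$) that the paper leaves implicit.
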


\section*{Acknowledgement}
\addcontentsline{toc}{section}{Acknowledgement}
The author wish to thank the referees for their many valuable remarks that helped to improve the manuscript.
The work was supported in part by the National Natural Science Foundation of China (12201388), and the Austrian Science Fund (FWF) (M2642 and I3027).

\section*{Statements and Declarations}
On behalf of all authors, the corresponding author states that there is no conflict of interest.
This manuscript has no associated data.

\end{document}